\title{Riemann problem for a particle-fluid coupling}
\author{Nina Aguillon, Universit\'e Paris Sud}
\newtheorem{thm}{Theorem}[section]
\newtheorem{lemma}[thm]{Lemma}
\newtheorem{prop}[thm]{Proposition}
\newtheorem{cor}[thm]{Corollary}
\theoremstyle{remark}
\newtheorem{rem}[thm]{Remark}
\theoremstyle{definition}
\newtheorem{defi}[thm]{Definition}
\newtheorem{ex}[thm]{Example}
\newcommand{\sign}{\text{sign}}
\newcommand{\R}{\mathbb{R}}
\newcommand{\eps}{\varepsilon}
\newcommand{\dis}{\displaystyle}
\begin{document}

\maketitle

\abstract{We present a model of coupling between a point wise particle and a compressible inviscid fluid following the Euler equations. The interaction between the fluid and the particle is achieved through a drag force. It writes as the product of a discontinuous function and a Dirac measure. After defining the solution, we solve the Riemann problem with a fixed particle for arbitrary initial data. We exhibit a set of conditions on the drag force under which there exists a unique self-similar solution. 

\bigskip

\noindent  \textbf{Key words and phrases:} Fluid-particle interaction; Euler equations; Riemann problem; nonconservative product.}

\bigskip
\noindent  \textbf{2010 Mathematics Subject Classification:} primary 35L65, 76N10;  secondary 35L67, 35R06.
\tableofcontents

\section{Introduction}
In this paper we introduce and study a simple one-dimensional model of fluid-structure interaction. We consider a compressible and inviscid fluid, which will be governed by the isothermal Euler equations. At point $x$ and time $t$, it has velocity $u(t,x)$ and density $\rho(t,x)$. The particle is assumed to be pointwise, of mass $m$, and we denote by $h(t)$, $h'(t)$ and $h''(t)$ its position, velocity and acceleration at time $t$ . The interaction between the fluid and the particle is achieved through a drag force $D$, reflecting the fact that both tend to share the same velocity.  Our model writes
\begin{subequations} \label{eqcouple}
\begin{numcases}{}
     \partial_t \rho + \partial_x (\rho u) = 0  \label{MassConservation}, \\
    \partial_t (\rho u) + \partial_x \left( \rho u^{2}+p(\rho) \right) = - D(\rho, \rho(u-h'(t))) \delta_{h(t)}(x)   \label{ActionReaction},\\
     m h''(t)=    D(\rho(t,h(t)), \rho(t, h(t)) [u(t,h(t))-h'(t)]) \label{NewtonLaw}, \\
     (\rho(0,x), u(0,x))=(\rho_{0}(x), u_{0}(x)), \notag \\
 (h(0), h'(0))=(0,v_{0}) \notag,
\end{numcases}
\end{subequations}
where $p$ is the pressure law. We suppose that $D$ has the same sign as $ u-h'$ (and therefore than $\rho(u-h')$), which is natural for a drag force. As a result, Equation~\eqref{NewtonLaw} shows formally that the particle accelerates (respectively decelerates) when the fluid's velocity $u(t,h(t))$ at its position is larger (respectively smaller) than its own velocity $h'(t)$. We choose the isothermal pressure law $p(\rho)=c^{2} \rho$ to avoid vacuum related issues, but the results could probably be extended without major difficulty in the adiabatic case $p(\rho)= \rho^{\gamma}, 1<\gamma \leq 3$. This model is a generalization of the one first introduced in~\cite{LST08}, and later deeply investigate in~\cite{AS10},~\cite{ALST10} and ~\cite{ALST13}, in which the fluid followed a scalar Burgers equation.

In the past few years, the interaction between an incompressible and viscous fluid and rigid bodies has been widely studied. Many papers deal with the existence of weak or strong solutions (\cite{SMS02}, \cite{GHS00}, \cite{F03}, \cite{DE99} and~\cite{VZ03}). The matter of collision has also been extensively investigated (\cite{VZ06},  \cite{S04}, ~\cite{H07}, \cite{HT09}). Some other works consider compressible fluid or elastic bodies (\cite{BST12}, \cite{CDE05}, \cite{DE00} and~\cite{F03bis}). The model we study strongly differs from those works as the equation governing the fluid is inviscid. In this framework, the d'Alembert's paradox states that in an incompressible and inviscid fluid, with vanishing circulation, no drag force exerts on a body moving at a constant speed. Birds should therefore not be able to fly. An answer to this paradox is that, even at very high Reynolds number, the effect of viscosity cannot be neglected in a thin layer around the body. In our model, this paradox is somehow ignored, as we directly impose a drag force  $D$  between the fluid and the particle. According to Newton's law, the particle follows the ordinary differential equation~\eqref{NewtonLaw}
$$m h''(t)=  D.$$
The action-reaction principle is taken into account in equation~\eqref{ActionReaction} on the momentum of the Euler equation: the particle applies the force $-D$ on the fluid. We suppose that the interaction is local: it applies only at point $h(t)$. Equation~\eqref{MassConservation} ensures that the fluid mass is conserved. This approach proved to be successful in the toy model~\cite{LST08} with a Burgers' fluid. In particular, it allows collisions between two particles having different velocities, unlike in the viscous case~\cite{VZ06}. For example, a particle trapped in a shock (case $V$ of Lemma 5.5 in~\cite{LST08}) will collide with a particle placed in front of the first particle and sharing the velocity of the fluid. In~\cite{ALST10}, the reader can find numerical simulations of the drafting kissing tumbling phenomenon. This work is strongly related to~\cite{BCG14}, in which a coupling between a particle and the Euler equations is presented. However, the modelization is quite different, the particle being taken into account through conservation of mass and energy, while we enforce in the present work a drag force. Moreover, the nature of the theoretical results are different and complementary. In~\cite{BCG14}, a local in time existence to the Cauchy problem for the fully coupled system is proved for small subsonic data. In the present work, we consider the Riemann problem for a particle having constant velocity, without making any assumption on the data.
 
We emphasize the fact that in model~\eqref{eqcouple}, the particle and the fluid do not share the same velocity. We do not impose any no slip condition as in works presented above. It can be justified by saying that the particle is porous and allows some fluid to pass through. It constitutes the main difficulty of this model. Indeed, as shocks develop in finite time in hyperbolic systems like~\eqref{MassConservation}-\eqref{ActionReaction}, even with $D=0$, the velocity $u$ and the density $\rho$ of the fluid have no reason to be continuous along the particle trajectory $h$. A first consequence is that the source term in~\eqref{ActionReaction} is not well defined. A second one is that the ODE~\eqref{NewtonLaw} the particle satisfies must be considered in a weak sense.  This paper focuses on the Riemann problem for the uncoupled problem
\begin{equation} \label{eqfixe}
\begin{cases}
     \partial_t \rho + \partial_x (\rho u) = 0,   \\
    \partial_t (\rho u) + \partial_x \left( \rho u^{2}+p(\rho) \right) = -D(\rho, \rho(u-h'(t))) \delta_{vt}, \\
     \rho(0,x)=\rho_{L} \mathbf{1}_{x<0} + \rho_{R} \mathbf{1}_{x>0}, \\
     u(0,x)=u_{L} \mathbf{1}_{x<0} + u_{R} \mathbf{1}_{x>0},
\end{cases}
\end{equation}
where the particle has a constant speed $v$. The difficulties arising from the coupling between an ODE and a PDE disappear, but the key point of the nonconservative source term remains. Another main difficulty in the analysis of~\eqref{eqfixe} is that the Dirac measure in the source term corresponds to a linearly degenerate field and our system is not hyperbolic. This may lead to resonance phenomena when two families of waves interact. Near resonance, Riemann problems with such source terms have been investigated in a conservative framework in~\cite{IT92} and later extended in the nonconservative framework in~\cite{GLF04}. Away from resonance, the particle trajectory can be treated as a noncharacteristic boundary (see~\cite{BCG10},~\cite{BCG12} and ~\cite{BCG14}). Our contribution is that, unlike in those frameworks, we solve the Riemann problem for all choices of parameters $(\rho_{L},u_{L})$, $(\rho_{R},u_{R})$ and $v$, without making any assumptions on their smallness or their resonant character.

Let us outline the organization of the paper and sketch the main results. The first section is entirely devoted to the definition of the solutions of~\eqref{eqcouple}. We exhibit an entropy condition that takes into account the particle. Then we give a rigorous definition of the nonconservative product
$$  D(\rho, \rho(u-h'(t) ) ) \delta_{h(t)}  $$ 
based on a thickening of the particle. Replacing the Dirac measure by one of its approximation, it appears that the density and velocity of the fluid at the entry of the particle and at its exit are always linked by the same relations. These relations are independent of the size and the shape of the thickened particle. This allows us to see the particle as an interface, through which those relations are imposed. They link the quantities $u(t,h(t)^{\pm})$, $\rho(t,h(t)^{\pm})$ and $h'(t)$. A first relation states that the quantity $\alpha := \rho (u-h')$ is constant across the particle (this is why we express $D$ as a function of $\rho$ and $\alpha$). This is equivalent to the conservation of the fluid's mass through the particle. Another relation describes the influence of the particle on the flow and depends on $D$.
When the drag force is $D(\rho, \alpha)= \alpha$ and the particle is motionless, it expresses that the loss of charge $ \rho u^{2}+ c^{2} \rho$ through the particle is proportional to the mass flow $\rho u$. The second section is devoted to the solution of the Riemann problem~\eqref{eqfixe} for a particle moving at constant speed $v$. In Theorem~\ref{allfriction2}, we exhibit a two conditions on the drag force $D$ that imply that~\eqref{eqfixe} has a unique self-similar entropy solution. The case of subsonic and supersonic initial datum are treated separately in Propositions~\ref{RSubSub} and~\ref{RSupSup}. In Subsection~\ref{SAsymptotics} we describe the two natural asymptotics when the drag force vanishes or becomes very large. Eventually in Section~\ref{SMultipleSolutions}, we discuss the case where the hypothesis of Theorem~\ref{allfriction2} are not fulfilled. We recover, in some particular cases, the existence of up to three solutions, as shown in a general framework in~\cite{IT92} and~\cite{GLF04}, and well known for fluid in a nozzle with discontinuous cross-section~\cite{TLF2} and for the shallow water equation with discontinuous decreasing topography~\cite{TLF1}.

\paragraph{Acknowledgments}
The author warmly thanks Frédéric Lagoutière for his advice and support during the achievement of this work.

\section{Definition of the solutions}
This section is devoted to the definition of the solution of the coupled system~\eqref{eqcouple}. The isothermal Euler equations are inviscid, so $\rho$ and $u$ can be discontinuous along the particle's trajectory $h$ and the product  $D(\rho, \rho (h'-u)) \delta_{h}(x)$ does not make sense. Following~\cite{LST08}, we consider two different regularizations in Sections~\ref{Sentropineq} and~\ref{SNCproduct}.  The first one consists in adding a vanishing viscosity to the equation. Passing to the non-regularized limit we deduce an entropy inequality for~\eqref{eqcouple}.  The second regularization is a thickening of the particle, which yields to an intrinsic definition of the non-conservative source term as an interface. 
 

\subsection{Entropy inequality} \label{Sentropineq}
Let us first focus on the following classical regularization of problem~\eqref{eqcouple}, where we add a vanishing viscosity to the Euler equation:
\begin{equation} \label{eqvisco}
 \left\{
 \begin{array}{ll}
 \partial_t \rho^{\eps} + \partial_x q^{\eps}   = \eps \partial_{xx} \rho^{\eps}, \\
 \partial_t q + \partial_x \left( \frac{(q^{\eps})^2}{\rho^{\eps}} +c^2 \rho^{\eps} \right)  = - D(\rho^{\eps},\alpha^{\eps}) \delta_{h(t)} + \eps \partial_{xx} q^{\eps},\\
 m h''(t)=  D(\rho^{\eps}(t,h(t)),\alpha^{\eps}(t,h(t)) ),\\
 (\rho^{\eps}(0,x), q^{\eps}(0,x))=(\rho_{0}^{\eps}(x), q^{\eps}_{0}(x)), \\
 (h(0), h'(0))=(h_{0},v_{0}).
\end{array} 
\right. 
\end{equation}
Here, $q= \rho u$ denotes the momentum of the fluid, $\alpha$ denotes the quantity $\rho(u-h')$ and we only assume that the drag force $D$ has the same sign than $\alpha$ (and hence than $u-h'$). In~\cite{LST08} and~\cite{D02} it is proven that the system
\begin{equation*}
 \left\{
 \begin{array}{ll}
 \partial_t u^{\eps} + \partial_x \frac{(u^{\eps})^{2}}{2}   = \eps \partial_{xx} u^{\eps} - \lambda (u^{\eps} -h') \delta_{h(t)}(x) \\
 m h''(t)=  \lambda (u^{\eps} -h'),\\
 u^{\eps}(0,x)= u^{0}(x), \\
 (h(0), h'(0))=(h_{0},v_{0}).
\end{array} 
\right. 
\end{equation*}
admits regular a solution when $u^{0}$ is regular. 
 In order to derive an entropy inequality for our fluid particle coupling~\eqref{eqcouple}, we assume that  if the initial data $(\rho^{0}, u^{0})$ are smooth and that the solutions of its regularization~\eqref{eqvisco} are also smooth.
Let $E(\rho,q)$ and $G(\rho,q)$ be a flux-entropy flux pair, with $E$ convex, such that $ \dis \partial_{q} E$ is a function of $u= \frac{q}{\rho}$ denoted by $J$. 
\begin{ex} \label{entropy}
The usual entropy-entropy flux pair
$$E(\rho,q)=\frac{q^{2}}{2\rho}+c^{2} \rho \log(\rho) \ \ \ \textrm{ and } \ \ \ G(\rho,q)= \frac{q}{\rho} (E(\rho,q)+c^{2} \rho)$$
fulfills this assumption: we have $J(u)=u$.
\end{ex}
For the sake of simplicity we introduce
$$ U_{0}^{\eps}=(\rho_{0}^{\eps},q_{0}^{\eps}), \ \ U^{\eps}=(\rho^{\eps},q^{\eps}) \ \ \ \textrm{ and } \ \ \ F(U^{\eps})=\left(q^{\eps}, \frac{(q^{\eps})^{2}}{\rho^{\eps} } +c^{2} \rho^{\eps} \right). $$
Let $\Phi \in \mathcal{C}_{0}^{\infty} (\R_{+}\times \R)$ be a \emph{non-negative} smooth function, and multiply the first equation of~\eqref{eqvisco} by $\Phi \partial_{\rho} E $ and the second equation by $\Phi \partial_{q} E = \Phi J$. Let us add the two equations and integrate over $\R_{+} \times \R$. We obtain
\begin{equation} \label{EIF}
 \begin{aligned}
\iint_{\R_{+} \times \R}  \Phi \nabla_{(\rho,q)} E(U^{\eps}) \cdot \partial_{t} U^{\eps}  \, dt \, dx + \iint_{\R_{+} \times \R} \Phi  \nabla_{(\rho,q)} E(U^{\eps}) \cdot \partial_{x} F(U^{\eps})  \, dt \, dx  \\
 -\eps \iint_{\R_{+} \times \R}  \Phi  \nabla_{(\rho,q)} E(U^{\eps}) \cdot \Delta U^{\eps} \, dt \, dx = -  \int_{\R_{+}} \left[ D(\rho^{\eps}, \alpha^{\eps} ) \Phi J( u^{\eps}) \right] (t,h(t)) dt .
\end{aligned}
\end{equation}
We first treat the left hand side of~\eqref{EIF} by integrating by parts. The first term gives
$$ 
\begin{aligned}
 \iint_{\R_{+} \times \R}  \Phi \nabla_{(\rho,q)} E(U^{\eps}) \cdot \partial_{t} U^{\eps}  \, dt \, dx & = \iint_{\R_{+} \times \R}  \Phi \partial_{t} E(U^{\eps}) \, dt \, dx \\
	&= - \iint_{\R_{+} \times \R}  E(U^{\eps}) \partial_{t} \Phi \, dt \, dx - \int_{\R} \Phi(0,\cdot) E(U_{0}^{\eps}) dx 
\end{aligned}
$$
and the second term yields
$$ 
\begin{aligned}
 \iint_{\R_{+} \times \R} \Phi \nabla_{(\rho,q)} E (U^{\eps}) \cdot \partial_{x} F(U^{\eps})  \, dt \, dx  &=  \iint_{\R_{+} \times \R} \Phi \nabla_{(\rho,q)}E(U^{\eps})  \cdot \left[ D F(U^{\eps}) \partial_{x} U^{\eps} \right] \, dt \, dx \\
 	&= \iint_{\R_{+} \times \R} \Phi  \nabla_{(\rho,q)} G(U^{\eps}) \cdot \partial_{x} U^{\eps}  \, dt \, dx \\ 
	&= \iint_{\R_{+} \times \R} \Phi  \partial_{x} G(U^{\eps})  \, dt \, dx \\
	&= - \iint_{\R_{+} \times \R}  G(U^{\eps}) \partial_{x} \Phi  \, dt \, dx .
\end{aligned}
$$
Let us now tackle the third term. We have
$$ 
\begin{aligned}
 \nabla_{(\rho,q)} E(U^{\eps}) \cdot \Delta U^{\eps} &= \sum_{i=1}^{2} \partial_{i} E(U^{\eps}) \partial_{xx} U^{\eps}_{i} \\
	&= \sum_{i=1}^{2} \left[ \partial_{x} ( \partial_{i} E(U^{\eps}) \partial_{x} U^{\eps}_{i}) - \left( \sum_{j=1}^{2}\partial_{ji}E(U^{\eps}) \partial_{x} U_{j}^{\eps} \right) \partial_{x} U^{\eps}_{i} \right] \\
	&= \partial_{xx}E(U^{\eps}) - \sum_{i=1}^{2} \sum_{j=1}^{2} ( \partial_{x} U_{j}^{\eps} ) (\partial_{ji}E(U^{\eps})) ( \partial_{x} U_{i}^{\eps} ),
\end{aligned}
$$
thus we obtain
$$ 
\begin{aligned}
 -\eps \iint_{\R_{+} \times \R}  \Phi \nabla_{(\rho,q)} E(U^{\eps}) \cdot \Delta U^{\eps} \, dt \, dx = &  -\eps \iint_{\R_{+} \times \R}  \Phi E(U^{\eps}) \partial_{xx} \Phi  \, dt \, dx \\
  	&+  \eps \iint_{\R_{+} \times \R}  \Phi \sum_{i=1}^{2} \sum_{j=1}^{2} ( \partial_{x} U_{j}^{\eps} ) \partial_{ij}E(U^{\eps}) ( \partial_{x} U_{i}^{\eps} ) \, dt \, dx.
\end{aligned}
$$
Remark that as $\Phi$ is non-negative and $E$ is convex, the last term is non-negative. We now focus on the right hand side of~\eqref{EI}. Let us multiply the ODE in the third equation of~\eqref{eqvisco} by $J(h'(t))\Phi(t,h(t))$. We have
$$ \int_{\R_{+}} - m h''(t)  J(h'(t))\Phi(t,h(t)) +  D(\rho^{\eps},\alpha^{\eps}) (t,h(t)) J(h'(t)) \Phi(t,h(t)) dt = 0, $$
which reads, with $P$ an antiderivative of $J$,
$$ \int_{\R_{+}} - m [P(h'(t))]'\Phi(t,h(t)) +  D(\rho^{\eps},\alpha^{\eps}) (t,h(t)) J(h'(t))\Phi(t,h(t)) dt = 0. $$
We can therefore replace the right hand side of~\eqref{EI} by
$$
 \int_{\R_{+}} m P(h'(t)) \partial_{t}(\Phi(t,h(t))) dt + \int_{\R_{+}} D(\rho^{\eps},\alpha^{\eps}) \left( J(h'(t))- J(u^{\eps}) \right)\Phi(h) dt + m P(v_{0}) \Phi(0,h_{0}).
$$
The function $J(u)=\partial_{q} E(1,u)$ is nondecreasing as the restriction of $E$ to the line $\rho=1$ is convex. Moreover,  $D$ has the same sign as $u^{\eps}-h'$. Thus the second term is non-positive. To conclude, we add the different terms and drop the two non-positive ones to obtain
$$
\begin{aligned}
 \iint_{\R_{+} \times \R}  E(U^{\eps}) \partial_{t} \Phi \, dt \, dx + \iint_{\R_{+} \times \R}  G(U^{\eps}) \partial_{x} \Phi  \, dt \, dx & +  \int_{\R_{+}} m P(h'(t)) \partial_{t}(\Phi(t,h(t))) dt \\
+ \int_{\R} \Phi(0,\cdot) E(U_{0}^{\eps}) \, dx + m P(v_{0}) \Phi(0,h_{0}) & \geq -\eps \iint_{\R_{+} \times \R}  \Phi E(U^{\eps}) \partial_{xx} \Phi  \, dt \, dx .
\end{aligned}
$$
Last, we formally pass to the limit as $\eps \longrightarrow 0$ and get the following entropy inequality for the coupled problem~\eqref{eqcouple}:
\begin{equation} \label{EntropyInequality}
 \begin{aligned}
 \iint_{\R_{+} \times \R}  E(U) \partial_{t} \Phi \, dt \, dx + \iint_{\R_{+} \times \R}  G(U) \partial_{x} \Phi  \, dt \, dx  +  \int_{\R_{+}} m P(h'(t)) \partial_{t}(\Phi(t,h(t))) dt \\
 + \int_{\R} \Phi(0,\cdot) E(U_{0}) \, dx + m P(v_{0}) \Phi(0,h_{0}) \geq 0.
\end{aligned}
\end{equation}
\begin{rem}
 When the test function $\Phi$ is supported on $\{ (t,x), \, x>h(t) \}$ or on $\{ (t,x), \, x<h(t) \}$, the inequality~\eqref{EntropyInequality} reduces to the classical entropy inequality for the isothermal Euler equation without source term.
\end{rem}

\subsection{How to handle the nonconservative product} \label{SNCproduct}
In this section we assume that the particle trajectory $h$ is given, and more precisely that it moves at constant speed $v$: $h(t)=vt$. 
We denote by $H$ the Heaviside function $H(x)= \mathbf{1}_{x>0}$. Introducing the momentum $q=\rho u$ and the new unknown $w(t,x)= H(x-vt)$ allows us to rewrite the system~\eqref{MassConservation}-\eqref{ActionReaction} in the framework of hyperbolic equation,
\begin{equation}
 \begin{cases}
 \partial_t \rho + \partial_x q = 0,   \\
  \partial_t q + \partial_x \left( \frac{q^{2}}{\rho}+c^2 \rho \right) + D(\rho, \alpha) \partial_{x} w = 0,\\
  \partial_{t} w + v \partial_{x} w=0.
\end{cases}
\end{equation}
Its quasilinear form is
\begin{equation}
\partial_{t} 
\begin{pmatrix}
 \rho \\ q \\ w 
\end{pmatrix}
+ 
\begin{pmatrix}
 0 & 1 & 0 \\
 c^{2}-u^{2} & 2u &D(\rho, \alpha) \\
 0 & 0 & v
\end{pmatrix}
\partial_{x}
\begin{pmatrix}
 \rho \\ q \\ w 
\end{pmatrix}
=0.
\end{equation}
The eigenvalues of the Jacobian matrix are $u+c$, $u-c$ and $v$. This system is strictly hyperbolic whenever $u \neq v \pm c$. In the resonant case $u \pm c = v$, the matrix cannot be put in a diagonal form. Such resonant systems have been studied in~\cite{GLF04} and~\cite{IT92}.  However, with this source term, our system does not fall neither in the framework of~\cite{IT92}, because it is not conservative, neither in the framework of~\cite{GLF04}, because one of the hypothesis on the source term (namely 1.7) is not satisfied when the drag force depends only on $\alpha$. Following~\cite{LST08},~\cite{GLF04},~\cite{IT92},~\cite{SV03} and~\cite{CLS04}, we use a thickening of the particle to define the nonconservative product.
 Let $H^{\eps}$ be an approximation of the Heaviside function such that:
\begin{itemize}
 \item $H^{\eps} \in \mathcal{C}^{0}(\R) \cap \mathcal{C}^{1}((-\eps/2, \eps/2))$;
 \item $H^{\eps}$ is nondecreasing;
 \item $H^{\eps}(x)=0$ if $x \leq -\eps/2$ and $H^{\eps}(x)=1$ if $x \geq \eps/2$.
\end{itemize}
We replace the Dirac measure by its regularization $(H^{\eps})'$ to obtain the regularized system
\begin{equation} \label{eqregul}
 \left\{
 \begin{array}{ll}
  \partial_t \rho^{\eps} + \partial_x q^{\eps} &= 0; \\
 \partial_t q^{\eps} + \partial_x \left(  \frac{ (q^{\eps})^2}{\rho^{\eps}} +c^2 \rho^{\eps} \right) &= -D(\rho^{\eps}, \alpha^{\eps}) (H^{\eps})'(x-vt).
\end{array} 
\right. 
\end{equation}
We are interesting in what is happening inside the particle. In the spirit of traveling waves, we look for solution only depending on $x-vt$.  With such a regularized source term, the values of the solutions at the extremities of the particle depend neither on the size of the thickened particle $\eps$, nor on the choice of the regularization $H^{\eps}$ (satisfying the three hypotheses above). This allows us to define the source term $D(\rho, \alpha) \delta_{h(t)}$ as an interface condition.

In the sequel, for $\alpha \neq 0$,  we denote by $F_{\alpha}$ the function
$$ F_{\alpha}(\rho) = \int_{\frac{|\alpha|}{c}} ^{\rho} \frac{1}{|D(r, \alpha)|} \left( - \frac{\alpha^{2}}{r} + c^{2} \right) \, dr.$$
Remark that $F_{\alpha}$ decreasing on $\left(0, \frac{|\alpha|}{c}\right)$ and increasing on $\left( \frac{|\alpha|}{c}, + \infty \right)$.
\begin{lemma} \label{lemmagerm}
Let $(\rho^{\eps}, q^{\eps})$ be a piecewise $\mathcal{C}^{1}$ solution of~\eqref{eqregul}  that only depends on $\xi= x-vt$ and defined for $\xi$ in $[-\eps/2, \eps/2]$. Then, on every interval $\mathcal{I}$ where the solution is smooth, the quantity $\alpha^{\eps} =q^{\eps}- v \rho^{\eps}$ remains constant. If $\alpha^{\eps}=0$ on $\mathcal{I}$, the density also remains constant, while if $\alpha^{\eps} \neq 0$, the evolution of $\rho^{\eps}$ on $\mathcal{I}$ is given by
\begin{equation} \label{decF}
 \left( F_{\alpha^{\eps}}(\rho^{\eps}(\xi)) \right)'= - \sign(\alpha^{\eps})   (H^{\eps})'.
\end{equation}
If the solution is discontinuous at a point $\xi^{0} \in (-\eps/2, \eps/2)$, then
\begin{equation} \label{RH}
\begin{cases}
  \alpha^{\eps}(\xi^{0}_{-})=\alpha^{\eps}(\xi^{0}_{+}):= \alpha^{\eps}(\xi^{0}) \, ; \\
 \dis  \left(\frac{(\alpha^{\eps}(\xi^{0}))^{2}}{\rho^{\eps}(\xi^{0}_{-})} +c^2 \rho^{\eps}(\xi^{0}_{-}) \right) - \left(\frac{(\alpha^{\eps}(\xi^{0}))^{2}}{\rho^{\eps}(\xi^{0}_{+})} +c^2 \rho^{\eps}(\xi^{0}_{+}) \right) = 0\end{cases}\end{equation}
\end{lemma}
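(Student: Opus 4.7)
The plan is to exploit that $\rho^{\eps}$ and $q^{\eps}$ depend only on $\xi = x-vt$. Since $\partial_t = -v\,d/d\xi$ and $\partial_x = d/d\xi$, the first equation of~\eqref{eqregul} reduces to $(q^{\eps} - v\rho^{\eps})' = 0$ on every smooth interval $\mathcal{I}$, which is exactly the asserted constancy of $\alpha^{\eps}$ and disposes of the first claim.

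Next I would use this constancy in the momentum equation. Writing $q^{\eps} = \alpha^{\eps} + v\rho^{\eps}$ and $(q^{\eps})' = v(\rho^{\eps})'$ on $\mathcal{I}$, a short expansion of $\bigl((q^{\eps})^2/\rho^{\eps} + c^2\rho^{\eps} - v q^{\eps}\bigr)'$ collapses it to $\bigl(c^2 - (\alpha^{\eps})^2/(\rho^{\eps})^2\bigr)(\rho^{\eps})'$, so the momentum equation becomes the autonomous ODE
$$\left(c^2 - \frac{(\alpha^{\eps})^2}{(\rho^{\eps})^2}\right)(\rho^{\eps})' = -D(\rho^{\eps}, \alpha^{\eps})(H^{\eps})'.$$
If $\alpha^{\eps} \equiv 0$ on $\mathcal{I}$, the sign hypothesis on $D$ forces $D(\rho^{\eps}, 0) = 0$ and, since $c\neq 0$, this yields $(\rho^{\eps})' = 0$, hence $\rho^{\eps}$ is constant. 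If $\alpha^{\eps} \neq 0$, I divide by $|D|$ and use $D/|D| = \sign(\alpha^{\eps})$ to recognise the left-hand side as $(F_{\alpha^{\eps}}(\rho^{\eps}))'$, which is exactly~\eqref{decF}.

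For the last assertion, a discontinuity of the $\xi$-profile at $\xi^0$ corresponds in $(t,x)$-coordinates to a discontinuity propagating along the line $x-vt = \xi^0$ at constant speed $v$, so classical Rankine--Hugoniot conditions apply with that shock speed. The crucial observation is that the source $D(\rho^{\eps},\alpha^{\eps})(H^{\eps})'$ is bounded on $(-\eps/2,\eps/2)$ since $H^{\eps}\in\mathcal{C}^1$ there and $\rho^{\eps}$, $\alpha^{\eps}$ are bounded; a locally bounded source contributes nothing to the jump relations. From the mass jump $v[\rho^{\eps}] = [q^{\eps}]$ one immediately gets $[\alpha^{\eps}] = 0$, the first line of~\eqref{RH}. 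Substituting $q^{\eps} = \alpha^{\eps} + v\rho^{\eps}$ (with $\alpha^{\eps}$ now continuous across $\xi^0$) into the momentum jump $v[q^{\eps}] = [(q^{\eps})^2/\rho^{\eps} + c^2\rho^{\eps}]$, the $v^2[\rho^{\eps}]$ terms cancel and one recovers precisely the second line of~\eqref{RH}.

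The main obstacle I anticipate is purely bookkeeping: carrying the auxiliary quantity $\alpha^{\eps}$ through the flux expansions cleanly enough that the algebraic cancellations become visible, and handling the degenerate case $\alpha^{\eps}=0$, which hinges on extracting $D(\rho,0)=0$ from the sign condition on $D$. The jump analysis itself is standard once one checks that the thickened source is locally bounded on $(-\eps/2,\eps/2)$.
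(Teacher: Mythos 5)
Your proposal is correct and follows essentially the same route as the paper: reduce to the travelling-wave ODE, use mass conservation to get $(\alpha^{\eps})'=0$, substitute $q^{\eps}=\alpha^{\eps}+v\rho^{\eps}$ into the momentum equation to obtain~\eqref{decF}, and apply the Rankine--Hugoniot relations at speed $v$ for the jumps. Your explicit remark that the thickened source is bounded and hence does not contribute to the jump conditions is a small point the paper leaves implicit, but the argument is otherwise identical.
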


\begin{proof}
Let $\rho^{\eps}(x-vt)$ and $q^{\eps}(x-vt)$ be a piecewise $\mathcal{C}^{1}$ solution  of~\eqref{eqregul}, only depending on $\xi=x-vt$. If the solution is smooth on the interval $\mathcal{I}$, it satisfies the following equations:
\begin{equation} \label{eqregulCI}
\left\{
 \begin{array}{ll}
 -v (\rho^{\eps})' + (q^{\eps})' &= 0, \\
-v  (q^{\eps})' + \left(\frac{(q^{\eps})^2}{\rho^{\eps}} +c^2 \rho^{\eps} \right)' &= -D(\rho^{\eps}, \alpha^{\eps}) (H^{\eps})'(x-vt).
\end{array} 
\right.
\end{equation}
The first equation of \eqref{eqregulCI} directly gives that $\alpha^{\eps}$ remains constant on $\mathcal{I}$. Replacing $q^{\eps}$ by $\alpha^{\eps}+v \rho^{\eps}$ in the second line of~\eqref{eqregulCI} yields
$$
-v^{2} (\rho^{\eps})' + \left(\frac{(\alpha^{\eps})^2+ 2 \alpha^{\eps} v \rho^{\eps} + v^{2} (\rho^{\eps})^{2} }{\rho^{\eps}} +c^2 \rho^{\eps} \right)' = - D(\rho^{\eps}, \alpha^{\eps})   (H^{\eps})' .$$
As $\alpha^{\eps}$ and $v$ are constant,  this expression simplifies in
$$  \left(- \frac{(\alpha^{\eps})^{2}}{(\rho^{\eps})^{2}}+ c^{2}  \right) (\rho^{\eps})'= - D(\rho^{\eps}, \alpha^{\eps}) (H^{\eps})' ,$$
which rewrites, by definition of $F_{\alpha}$,
\begin{equation*} 
 \left[ F_{\alpha^{\eps}}(\rho^{\eps}(\xi)) \right]'=\frac{1}{|D(\rho^{\eps}, \alpha^{\eps})|} \left(-\frac{(\alpha^{\eps})^2}{(\rho^{\eps}) ^{2}} +c^2 \right) ( \rho^{\eps})'  = - \sign(\alpha^{\eps})   (H^{\eps})'.
\end{equation*}
On the other hand, if $(\rho^{\eps},q^{\eps})$ has a discontinuity at a point $\xi^{0} \in (-\eps/2,\eps/2)$, it verifies the two relations:
\begin{equation*}
\begin{cases}
  q^{\eps}(\xi^{0}_{+})-q^{\eps}(\xi^{0}_{-})= v (\rho^{\eps}(\xi^{0}_{+})-\rho^{\eps}(\xi^{0}_{-})), \\
  \dis \left( \frac{(q^{\eps}(\xi^{0}_{+}))^{2}}{\rho^{\eps}(\xi^{0}_{+})} + c^{2} \rho^{\eps}(\xi^{0}_{+}) \right) - \left( \frac{(q^{\eps}(\xi^{0}_{-}))^{2}}{\rho^{\eps}(\xi^{0}_{-})} + c^{2} \rho^{\eps}(\xi^{0}_{-}) \right) = v (q^{\eps}(\xi^{0}_{+}) - q^{\eps}(\xi^{0}_{-}) ),
\end{cases}
\end{equation*}
and we obtain the result~\eqref{RH} by introducing the conserved quantity $\alpha(\xi^{0})=q^{\eps}(\xi^{0}_{-})-v \rho^{\eps}(\xi^{0}_{-})= q^{\eps}(\xi^{0}_{+})-v \rho^{\eps}(\xi^{0}_{+})$.
\end{proof}

\begin{rem}
 The relations~\eqref{RH} are nothing but the Rankine-Hugoniot relations for a shock having speed $v$ in the isothermal Euler equations. The lemma below states that entropy shocks only link supersonic states to subsonic states (from left to right if $\alpha>0$, from right to left if $\alpha<0$).
 \end{rem}

\begin{lemma} \label{entropyshocks}
 The shock corresponding to the Rankine-Hugoniot relations~\eqref{RH} is an entropy satisfying shock in the Euler equations (without source term) for the entropy-entropy flux pair of Example~\ref{entropy} if and only if $\alpha^{\eps}(\xi^{0})>0$ and $\alpha^{\eps}(\xi^{0})>c \rho^{\eps}(\xi^{0}_{-})$ or if $\alpha^{\eps}(\xi^{0})<0$ and $\alpha^{\eps}(\xi^{0})<-c \rho^{\eps}(\xi^{0}_{+})$.
\end{lemma}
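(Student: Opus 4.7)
The plan is to compute the entropy dissipation across the shock directly, and show that, under the Rankine--Hugoniot relation, its sign depends only on $\alpha$ and on the ratio $\rho_+/\rho_-$.

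First, I simplify~\eqref{RH}. Writing $\alpha = \alpha^\eps(\xi^0)$ and $\rho_\pm = \rho^\eps(\xi^0_\pm)$, the momentum jump relation rearranges as $\alpha^2(\rho_-^{-1}-\rho_+^{-1})=c^2(\rho_+-\rho_-)$. Thus either $\rho_+=\rho_-$, in which case the shock is trivial (and the statement is vacuous), or
$$ \alpha^2 \;=\; c^2\, \rho_-\, \rho_+. $$
Only the latter case needs discussion.

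Second, I compute the entropy dissipation for the pair of Example~\ref{entropy}. Parameterising the state by $\rho$ at fixed $\alpha$, i.e.\ $u=\alpha/\rho+v$, a direct expansion gives
$$ G - vE \;=\; \frac{\alpha^3}{2\rho^2}+\frac{\alpha^2 v}{\rho}+\frac{\alpha v^2}{2}+\alpha c^2(\log\rho+1)+v c^2\rho. $$
Taking the jump $[\,\cdot\,] := \cdot|_{\xi^0_+}-\cdot|_{\xi^0_-}$ and injecting $\alpha^2=c^2\rho_-\rho_+$, all the $v$-dependent contributions cancel pairwise, and after tidying I obtain
$$ [G-vE] \;=\; \alpha\, c^2\, f\!\left(\frac{\rho_+}{\rho_-}\right), \qquad f(x) \;:=\; \frac{1}{2x}-\frac{x}{2}+\log x. $$

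It remains to analyse the real function $f$. From $f(1)=0$ and $f'(x)=-(x-1)^2/(2x^2)\le 0$, the function $f$ is strictly negative on $(1,+\infty)$ and strictly positive on $(0,1)$. The Lax entropy admissibility $[G-vE]\le 0$ for a nontrivial shock therefore holds if and only if $\alpha>0$ and $\rho_+>\rho_-$, or $\alpha<0$ and $\rho_+<\rho_-$. Rewriting these strict density inequalities through $\alpha^2=c^2\rho_-\rho_+$ gives $\alpha>c\rho_-$ in the first case and $\alpha<-c\rho_+$ in the second, which is exactly the equivalence claimed. The only mildly delicate step is verifying that every $v$-dependent contribution in the jump $[G-vE]$ cancels under $\alpha^2=c^2\rho_-\rho_+$; this is what turns the admissibility criterion into an intrinsic relation between $\alpha$, $\rho_-$ and $\rho_+$ alone.
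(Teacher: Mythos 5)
Your proof is correct, and it follows a genuinely different route from the paper's. You compute the entropy dissipation $[G-vE]$ across the discontinuity directly for the pair of Example~\ref{entropy}: after reducing the Rankine--Hugoniot relations to $\alpha^{2}=c^{2}\rho_{-}\rho_{+}$ (for a nontrivial jump), the $v$-dependent terms in the jump of $G-vE$ do cancel, as I checked — the key identity is $\alpha^{2}v\bigl(\rho_{+}^{-1}-\rho_{-}^{-1}\bigr)=-vc^{2}(\rho_{+}-\rho_{-})$ — and the remaining expression is $\alpha c^{2}f(\rho_{+}/\rho_{-})$ with $f(x)=\tfrac{1}{2x}-\tfrac{x}{2}+\log x$, whose sign analysis via $f'(x)=-(x-1)^{2}/(2x^{2})$ is correct and yields exactly the claimed characterization. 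The paper instead argues through the Lax classification: given the sign of $\rho_{+}-\rho_{-}$ forced by $\rho_{-}\rho_{+}=\alpha^{2}/c^{2}$, the discontinuity could only be admissible as a $1$- or $2$-shock, and the explicit shock-speed formula $v=u_{\pm}\mp c\sqrt{\rho_{\mp}/\rho_{\pm}}$ then either contradicts or is consistent with the sign of $\alpha$. Your computation is arguably more faithful to the literal statement of the lemma, which refers to admissibility for the specific entropy pair, and it is self-contained (no appeal to the equivalence between Lax and entropy admissibility for genuinely nonlinear fields); the paper's argument is shorter and leans on that standard equivalence. One cosmetic remark: the inequality $[G-vE]\le 0$ is the entropy admissibility condition rather than the ``Lax'' condition, so the wording in your last paragraph should be adjusted, but this does not affect the argument.
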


\begin{proof}
 Suppose that $\alpha^{\eps}(\xi^{0})$ is positive.
 If $c \rho^{\eps}(\xi^{0}_{-})> \alpha^{\eps}$ then $\rho^{\eps}(\xi^{0}_{-})>\rho^{\eps}(\xi^{0}_{+})$. If the shock was a Lax shock, it should be a $2$-shock. Therefore we should have
 $$ v = \frac{q^{\eps}(\xi^{0}_{+})}{\rho^{\eps}(\xi^{0}_{+})}+ c \sqrt{\frac{\rho^{\eps}(\xi^{0}_{-})}{\rho^{\eps}(\xi^{0}_{+})}}, $$
which rewrites
$$ c \sqrt{\frac{\rho^{\eps}(\xi^{0}_{-})}{\rho^{\eps}(\xi^{0}_{+})}} = -\frac{\alpha^{\eps}}{\rho^{\eps}(\xi^{0}_{+})}, $$
and contradicts the fact that $\alpha^{\eps}>0$.  On the other hand if $c \rho^{\eps}(\xi^{0}_{-})< \alpha^{\eps}$, we have that $\rho^{\eps}(\xi^{0}_{-})<\rho^{\eps}(\xi^{0}_{+})$, and the discontinuity should be a $1$-shock. We obtain 
 $$ c \sqrt{\frac{\rho^{\eps}(\xi^{0}_{-})}{\rho^{\eps}(\xi^{0}_{+})}}=  \frac{\alpha^{\eps}}{\rho^{\eps}(\xi^{0}_{+})}, $$
which does not contradict the sign of $\alpha^{\eps}$. Similarly, we obtain that if $\alpha$ is negative, the jump is an entropy satisfying shock if and only if $ c\rho^{\eps}(\xi^{0}_{+})< |\alpha^{\eps}|$.  

\end{proof}

\begin{defi} \label{defgermgen}
 The germ at speed $v$ is the set $\mathcal{G}_{D}(v)$ containing all the pairs $((\rho_{-},q_{-}), (\rho_{+},q_{+}))$ of $(\R_{+}^{*} \times \R)^{2}$ verifying the two following relations.
\begin{enumerate} 
 \item First,
 $$ q_{-}-v \rho_{-}=q_{+}-v \rho_{+}. $$
 We denote by $\alpha$ this quantity.
 \item Second,
\begin{itemize}
 \item either $\alpha=0$, $\rho_{-}= \rho_{+}$ and $q_{-}=q_{+}$;
 \item or $ 0< \alpha $ , $ (\frac{\alpha}{c} -\rho_{+} )  (\frac{\alpha}{c} -\rho_{-} ) \geq 0 $ and
 $$ F_{\alpha}(\rho_{-})- F_{\alpha}(\rho_{+})=  1; $$
  \item  or $\alpha < 0$, $ (\frac{|\alpha|}{c} -\rho_{+} )  (\frac{|\alpha|}{c} -\rho_{-} ) \geq 0 $ and
 $$ F_{\alpha}(\rho_{+}) -F_{\alpha}(\rho_{-})=  1; $$
 \item or $ c \rho_{-} < \alpha$, $\rho_{+}  \geq \frac{\alpha}{c}$, and there exists $\rho \in (\rho_{-}, \frac{\alpha}{c})$ and $\theta \in [0,1]$ such that 
 $$ 
\begin{cases}
 F_{\alpha}(\rho_{-})- F_{\alpha}(\rho) &=   \theta, \\
 F_{\alpha}(\frac{\alpha^{2}}{c^{2} \rho})- F_{\alpha}(\rho_{+}) &=  (1-\theta); \\
\end{cases}
$$
\item or $   \alpha < -c \rho_{+}$, $\rho_{-}  \geq \frac{|\alpha|}{c}$ and there exists $\rho \in (\rho_{+}, \frac{|\alpha|}{c})$ and $\theta \in [0,1]$ such that
 $$ 
\begin{cases}
 F_{\alpha}(\rho_{+})- F_{\alpha}(\rho) &=  \theta, \\
 F_{\alpha}(\frac{\alpha^{2}}{c^{2}\rho})- F_{\alpha}(\rho_{-}) &=   (1-\theta). \\
\end{cases}
$$
\end{itemize}
\end{enumerate}
\end{defi}

\begin{thm} \label{Tdefgermgen}
Suppose that $((\rho_{-},q_{-}), (\rho_{+},q_{+}))$ belongs to $\mathcal{G}_{D}(v)$. Then for all positive $\eps$ and for all regularization $H^{\eps}$ fulfilling the hypothesis of~\eqref{eqregul}, there exists a piecewise $\mathcal{C}^{1}$ entropy solution of~\eqref{eqregul} only depending  on $\xi=x-vt$, such that $(q^{\eps}(-\eps/2), \rho^{\eps}(-\eps/2))= (q_{-}, \rho_{-})$ and $(q^{\eps}(\eps/2), \rho^{\eps}(\eps/2))= (q_{+}, \rho_{+})$. By entropy solution, we mean that each discontinuity in the solution corresponds to a entropy satisfying shock in the Euler equations.

Conversly, if $(\rho_{-},q_{-})$ and $(\rho_{+},q_{+})$ are the values in $-\eps/2$ and $\eps/2$ of such a solution of~\eqref{eqregul}, then they verify the two relations stated above.
\end{thm}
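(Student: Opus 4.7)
My approach reduces both directions to a scalar ODE analysis in the constant value of $\alpha^{\eps}$, by systematically exploiting Lemma~\ref{lemmagerm} and Lemma~\ref{entropyshocks}. The key geometric fact is that for $\alpha \neq 0$, $F_{\alpha}$ is a strictly monotone homeomorphism on each of the two branches $(0,|\alpha|/c)$ and $(|\alpha|/c,+\infty)$; the ODE $(F_{\alpha}(\rho^{\eps}))' = -\sign(\alpha)(H^{\eps})'$ forces $F_{\alpha}\circ\rho^{\eps}$ to be monotone in $\xi$ on every smooth piece; and the Rankine--Hugoniot relations~\eqref{RH} reduce to $\rho^{\eps}(\xi_{0}^{-})\rho^{\eps}(\xi_{0}^{+}) = \alpha^{2}/c^{2}$, so an admissible shock precisely flips the two branches of $F_\alpha$.

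For the converse direction, I would start from a piecewise $\mathcal{C}^{1}$ entropy solution $(\rho^\eps,q^\eps)$ and apply Lemma~\ref{lemmagerm} on each smooth piece and at each discontinuity: $\alpha^{\eps}$ is constant on smooth pieces and conserved across shocks, so it takes a single value $\alpha$ on the whole interval. If $\alpha=0$ the sign hypothesis gives $D(\cdot,0)\equiv 0$, so $(\rho^{\eps})'\equiv 0$ and no shock is compatible with $\rho>0$; this is the first germ case. If $\alpha \neq 0$, the monotonicity of $F_\alpha \circ \rho^\eps$ combined with Lemma~\ref{entropyshocks} (which, for $\alpha>0$, only admits shocks from $\rho < \alpha/c$ to $\rho > \alpha/c$, and symmetrically for $\alpha<0$) rules out more than one shock. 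Telescoping the ODE over the smooth pieces, together with $H^\eps(\eps/2)-H^\eps(-\eps/2)=1$, yields $F_\alpha(\rho_-)-F_\alpha(\rho_+) = \sign(\alpha)$ in the no-shock cases and the pair of equations parametrized by $\theta := H^\eps(\xi_0) \in [0,1]$ in the one-shock cases.

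For the direct direction, I would construct a solution explicitly in each of the five sub-cases of Definition~\ref{defgermgen}. The case $\alpha=0$ is the trivial constant pair. In the no-shock cases, the sign condition $(|\alpha|/c - \rho_-)(|\alpha|/c - \rho_+) \geq 0$ places $\rho_\pm$ on the same branch of $F_\alpha$, on which $F_\alpha$ is a homeomorphism; I set
$$
\rho^\eps(\xi) \;=\; F_\alpha^{-1}\bigl(F_\alpha(\rho_-) - \sign(\alpha)\,H^\eps(\xi)\bigr)
$$
with $F_\alpha^{-1}$ being the inverse on that branch, so that $\rho^\eps(-\eps/2)=\rho_-$ and the compatibility relation forces $\rho^\eps(\eps/2) = \rho_+$; then $q^\eps := \alpha + v\rho^\eps$ closes the construction, and the ODE is satisfied by design. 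In the shock cases, $\theta \in [0,1]$ selects a unique $\xi_0$ with $H^\eps(\xi_0)=\theta$ (by continuity and monotonicity of $H^\eps$); the same inversion procedure produces a smooth branch on $(-\eps/2,\xi_0)$ ending at the value $\rho$ prescribed by the first equation of the case (lying in $(\rho_-, \alpha/c)$ for $\alpha > 0$ by strict monotonicity of $F_\alpha$), a Rankine--Hugoniot jump to $\alpha^2/(c^2\rho)$, and a smooth branch on $(\xi_0,\eps/2)$ ending at $\rho_+$; the hypothesis $c\rho_- < \alpha$ (resp.\ $\alpha < -c\rho_+$) is exactly the entropy condition of Lemma~\ref{entropyshocks} for the inserted shock.

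The main obstacle is the careful bookkeeping across the five sub-cases, in particular sign conventions and the identification of the supersonic and subsonic branches of $F_\alpha$ in the shock cases; one has to verify that the auxiliary $\rho$ from the definition lies in the prescribed open interval, which is a direct consequence of the strict monotonicity of $F_\alpha$ on each branch, and to check that the inequalities appearing in the germ match exactly the entropy selection of Lemma~\ref{entropyshocks}, which is what makes the forward and backward implications tight.
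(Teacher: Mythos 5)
Your proposal is correct and follows essentially the same route as the paper: constancy of $\alpha^{\eps}$ via Lemma~\ref{lemmagerm}, monotonicity of $F_{\alpha}\circ\rho^{\eps}$ forced by the ODE, Lemma~\ref{entropyshocks} to rule out more than one discontinuity, and the Rankine--Hugoniot relation $\rho^{\eps}(\xi^{0}_{-})\rho^{\eps}(\xi^{0}_{+})=\alpha^{2}/c^{2}$ to switch branches. The only difference is cosmetic: you spell out the existence direction explicitly through the inversion $\rho^{\eps}=F_{\alpha}^{-1}\bigl(F_{\alpha}(\rho_{-})-\sign(\alpha)H^{\eps}(\xi)\bigr)$, which the paper leaves implicit in its description of the two solution types.
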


\begin{proof} Let $(\rho^{\eps}, q^{\eps})$ be a solution of~\eqref{eqregul} which depends only on $\xi= x-vt$, is piecewise $\mathcal{C}^{1}$ and whose discontinuities are entropy shocks. A straightforward consequence of Lemma~\ref{lemmagerm} is that the quantity $\alpha^{\eps}$ (equals to $\rho^{\eps}(u^{\eps}-v)$) is constant on the whole interval $[-\eps/2, \eps/2]$. In the sequel we suppose that $\alpha^{\eps}$ is positive. In that case the fluid's velocity $u^{\eps}$ is everywhere larger than the particle's velocity $v$ and the ``entry'' of the particle is on its right at $\xi= -\eps/2$. We  fix a state $(\rho_{-},q_{-}) \in \R_{+}^{*} \times \R$ at the entry of the particle, and look for the $(\rho_{+},q_{+}) \in \R_{+}^{*} \times \R$ at its exit. The reasoning is the same with $\alpha^{\eps}<0$ (and trivial if $\alpha^{\eps}=0$), but the entry of the particle is on its right and it is more convenient to fix the state $(\rho_{+},q_{+})$. 

There is only one solution which is smooth on the entire interval $[-\eps/2, \eps/2]$. We integrate~\eqref{decF} on this interval to obtain
$$ F_{\alpha^{\eps}}(\rho_{-})- F_{\alpha^{\eps}}(\rho_{+}) = 1. $$
As depicted on Figure~\ref{Fgerm}, the function  $F_{\alpha^{\eps}}$ decreases on $(0, \frac{\alpha^{\eps}}{c})$ and increases on $( \frac{\alpha^{\eps}}{c}, +\infty)$. Its minimum is reached for $ \frac{\alpha^{\eps}}{c}$. Moreover, the regularization $H^{\eps}$ of the Heaviside function is increasing. As a consequence, $\xi \mapsto F_{\alpha^{\eps}}(\rho^{\eps}(\xi))$ decreases, and $\rho^{\eps}$ cannot cross continuously $\alpha^{\eps}/c$. On its interval of smoothness, a solution of~\eqref{eqregul} is always subsonic (i.e $|u^{\eps}-v| \leq c$ or equivalently $c \rho^{\eps} \geq \alpha^{\eps}$) or always supersonic (i.e $|u^{\eps}-v| \geq c$ or equivalently $c \rho^{\eps} \leq \alpha^{\eps}$). This explains the condition $ (\frac{\alpha}{c} -\rho_{+} )  (\frac{\alpha}{c} -\rho_{-} ) \geq 0 $ in the second point of $2$ in Definition~\ref{defgermgen}.

On the other hand by Lemma~\ref{entropyshocks}, a discontinuity at a point $\xi^{0}$ is entropy satisfying if and only if $\alpha^{\eps}(\xi^{0})>c \rho^{\eps}(\xi^{0}_{-})$. Therefore a solution has no discontinuity if $\alpha_{-} \leq c \rho_{-}$ and has at most one discontinuity if $\alpha_{-} > c \rho_{-}$. We focus on this last case. The solution is smooth  on $(-\eps/2, \xi^{0})$ and~\eqref{decF} yields:
$$ F_{\alpha}(\rho_{-})- F_{\alpha}(\rho^{\eps}(\xi^{0}_{-}))= H^{\eps}(\xi^{0}). $$
There is a shock in $\xi^{0}$, and Rankine-Hugoniot relations~\eqref{RH} imply that
$$\rho^{\eps}(\xi^{0}_{-}) \rho^{\eps}(\xi^{0}_{+}) = \frac{(\alpha^{\eps})^{2}}{c^{2}}. $$
In particular, $c \rho^{\eps}(\xi^{0}_{+}) \geq \alpha^{\eps}$ and there is no shock after $\xi^{0}$. We integrate~\eqref{decF} on $(\xi^{0}, \eps/2)$ to get
$$ F_{\alpha}(\rho^{\eps}(\xi^{0}_{+}))- F_{\alpha}(\rho^{\eps}_{+})= 1-H^{\eps}(\xi^{0}). $$
We obtain the third point with $\rho= \rho^{\eps}(\xi^{0}_{-})$ and $\theta=H^{\eps}(\xi^{0})$. The two types of solutions, continuous everywhere or containing a single entropy shock, are described on Figure~\ref{Fgerm}.
\begin{psfrags} 
 \psfrag{al}{$1$}
  \psfrag{rho}{$\rho$}
  \psfrag{t}{$\theta$}
  \psfrag{1-t}{$1-\theta$}
  \psfrag{2ac}{$2 \alpha c$}
  \psfrag{2ac+al}{$2 \alpha c+ \lambda \alpha$}
  \psfrag{a/c}{ $\frac{|\alpha|}{c} $}
  \psfrag{1C}[][][1][-4.5]{ entropy shock}
  \psfrag{NEC}[][][1][8]{ non entropy shock}
  \psfrag{rho ->F}{$\rho \mapsto F_{\alpha}(\rho)$}
 \begin{figure}[H]
 \centering
 \includegraphics[width=16cm]{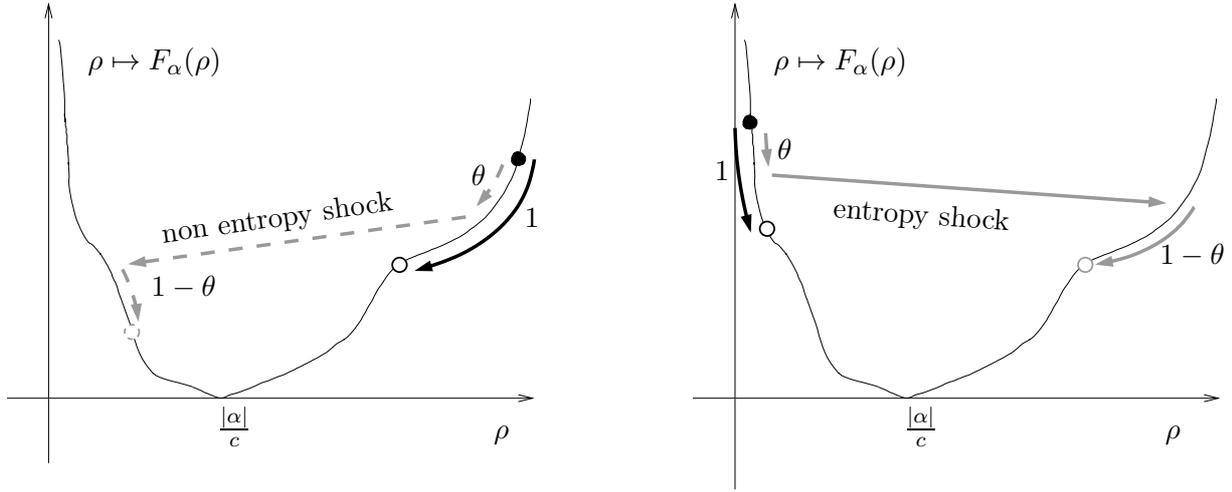}
 \caption{ How to reach the density $\rho_{+}$ (white circles) from the density $\rho_{-}$ (black circles) when $\alpha$ is positive. On the left is the supersonic case $c \rho_{-}<\alpha$; on the right is the subsonic case $\alpha \leq c \rho_{-}$. }
 \label{Fgerm}
 \end{figure}
\end{psfrags}
\end{proof}

In the following Corollary we extract properties of the germ $\mathcal{G}_{D}(v)$. 
\begin{cor} \label{sub->sub}
 If the pair $((\rho_{-},q_{-}),(\rho_{+},q_{+}))$ belongs to the germ $\mathcal{G}_{D}(v)$, then we necessary have:
\begin{itemize}
\item $ \alpha_{-} = \alpha_{+} := \alpha$;
\item if $\alpha>0$ and $c \rho_{-} \geq \alpha$,  then $ c \rho_{-} \geq c \rho_{+} \geq \alpha$;
\item if $\alpha<0$ and $c \rho_{+} \geq |\alpha|$,  then $ c \rho_{+} \geq c \rho_{-} \geq |\alpha|$;
\end{itemize}
\end{cor}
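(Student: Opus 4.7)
The corollary falls out of an inspection of Definition~\ref{defgermgen}. My plan is a direct case analysis driven by the monotonicity of $F_\alpha$ on $[|\alpha|/c, +\infty)$.

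\textbf{Step 1: the conservation of $\alpha$.} The first bullet is immediate, as it is nothing but a rewriting of the first relation in Definition~\ref{defgermgen}, $q_- - v\rho_- = q_+ - v\rho_+$. I would simply name this common value $\alpha$ as the definition itself does.

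\textbf{Step 2: the subsonic case with $\alpha > 0$.} Assume $\alpha > 0$ and $c\rho_- \geq \alpha$. Among the five sub-cases of the second relation in Definition~\ref{defgermgen}, the trivial case $\alpha = 0$ is excluded, the third ($\alpha < 0$) is excluded by sign, and the fourth requires $c\rho_- < \alpha$, which is incompatible with the assumption. The fifth is excluded by sign too. Only the second sub-case remains, so $(\alpha/c - \rho_+)(\alpha/c - \rho_-) \geq 0$ and $F_\alpha(\rho_-) - F_\alpha(\rho_+) = 1$. The sign condition, combined with $\rho_- \geq \alpha/c$, forces $\rho_+ \geq \alpha/c$ as well, giving $c\rho_+ \geq \alpha$. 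Then both $\rho_-$ and $\rho_+$ lie in the region $[\alpha/c, +\infty)$ where $F_\alpha$ is strictly increasing (as noted right after the definition of $F_\alpha$), so the equality $F_\alpha(\rho_-) - F_\alpha(\rho_+) = 1 > 0$ yields $\rho_- > \rho_+$. Putting the inequalities together gives $c\rho_- \geq c\rho_+ \geq \alpha$.

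\textbf{Step 3: the subsonic case with $\alpha < 0$.} This is perfectly symmetric. If $c\rho_+ \geq |\alpha|$, then the fifth sub-case (which requires $\alpha < -c\rho_+$, i.e., $|\alpha| > c\rho_+$) is excluded, and the first, second and fourth are excluded by sign. Hence we are in the third sub-case: $(|\alpha|/c - \rho_+)(|\alpha|/c - \rho_-) \geq 0$ and $F_\alpha(\rho_+) - F_\alpha(\rho_-) = 1$. The sign condition together with $\rho_+ \geq |\alpha|/c$ yields $\rho_- \geq |\alpha|/c$, i.e., $c\rho_- \geq |\alpha|$, and the strict monotonicity of $F_\alpha$ on $[|\alpha|/c, +\infty)$ applied to the equation $F_\alpha(\rho_+) - F_\alpha(\rho_-) = 1 > 0$ gives $\rho_+ > \rho_-$.

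There is no real obstacle here: the whole argument reduces to noticing which sub-cases of Definition~\ref{defgermgen} are compatible with a subsonic left (resp.\ right) state, and then to reading the sign of $F_\alpha(\rho_-) - F_\alpha(\rho_+)$ against the monotonicity of $F_\alpha$. The only point requiring a moment of care is verifying that the subsonic hypothesis is indeed incompatible with the ``shock'' sub-cases (the fourth and fifth items), which is a purely logical check on the strict inequalities $c\rho_- < \alpha$ and $\alpha < -c\rho_+$.
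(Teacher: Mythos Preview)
Your proof is correct and follows essentially the same approach as the paper's: rule out the sub-cases of Definition~\ref{defgermgen} that are incompatible with a subsonic entry state, then invoke the monotonicity of $F_\alpha$ on $[|\alpha|/c,+\infty)$ to compare $\rho_-$ and $\rho_+$. The paper phrases the exclusion of the shock sub-cases by pointing back to the proof of Theorem~\ref{Tdefgermgen} (subsonic entry $\Rightarrow$ no entropy shock inside the particle), whereas you do the equivalent check directly on the strict inequalities in the definition; the remaining monotonicity argument is identical.
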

\begin{proof}
Suppose that $\alpha$ is positive. We already emphasized  in the proof of Theorem~\ref{Tdefgermgen} that if the velocity at the entry of the particle is subsonic (i.e. $c \rho_{-} \geq \alpha$) then there is no discontinuity in the solution and the solution remains subsonic: $c \rho_{-} \geq \alpha$. In that case, $F_{\alpha}(\rho_{-})-F_{\alpha}(\rho_{+}) =1$, and as $F_{\alpha}$ increases on $(\alpha/c, + \infty)$ we obtain that $\rho_{-} \geq \rho_{+}$.

\end{proof}
\begin{rem}
When the drag force depends only on $\alpha$, 
$$ F_{\alpha}(\rho)= \frac{1}{|D(\rho, \alpha)|} \left( \frac{\alpha^{2}}{\rho} + c^{2} \rho \right) $$
for some real $C$, and $F_{\alpha}$ has the remarkable property of being compatible with shocks at speed $v$ in the Euler equations:
$$ \forall \alpha \neq 0,\, \forall \rho >0, \, F_{\alpha}\left( \frac{\alpha^{2}}{c^{2}\rho} \right) = F_{\alpha}(\rho).$$  
It follows that a shock corresponds to a horizontal jump on the graph of $F_{\alpha}$, which is not the case in general. A consequence is that the right state $(\rho_{+}, q_{+})$ is the same whatever the value of $\theta$ is. The contrast between the two situations is depicted on Figure~\ref{Fgermgen} below.
\begin{psfrags} 
 \psfrag{al}{$ $}
  \psfrag{rho}{$\rho$}
  \psfrag{2ac}{$2 \alpha c$}
   \psfrag{ma}{$m_{\alpha}$}
  \psfrag{2ac+al}{$2 \alpha c+ \lambda \alpha$}
   \psfrag{al}{}
  \psfrag{a/c}{ $\frac{|\alpha_{-}|}{c} $}
  \psfrag{NEC}{ non entropic shock}
  \psfrag{rho ->f}{$\rho \mapsto F_{\alpha}(\rho)$}
  \psfrag{rho ->F}{$\rho \mapsto F_{\alpha}(\rho)$}
 \begin{figure}[H] 
 \centering
 \includegraphics[width=16cm]{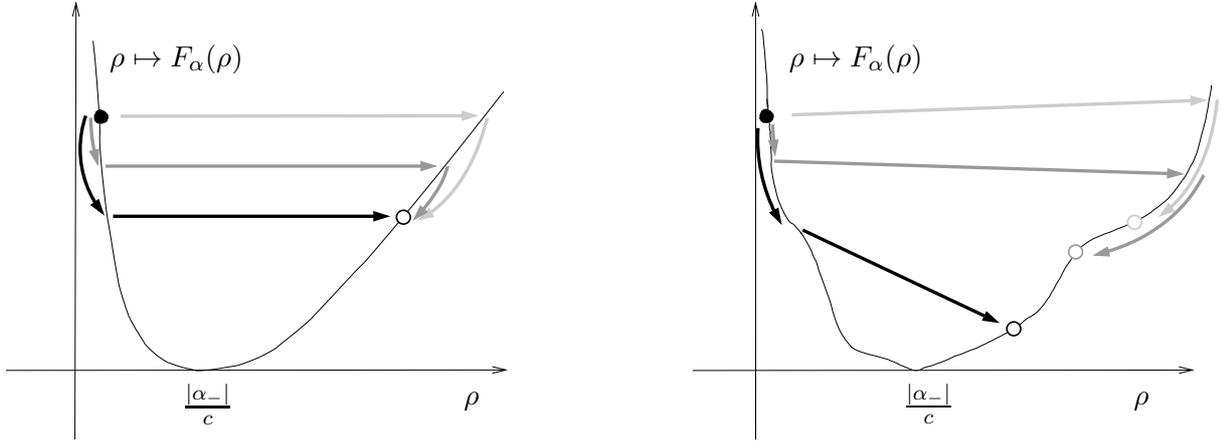}
 \caption{ Densities $\rho_{+}$ (white dots) accessible from $\rho_{-}$ with $\theta=0$ (light grey) $0<\theta<1$ (medium grey) and $\theta=1$ (black). On the left, the drag force depends only on $\alpha$, while on the right, it also depends on $\rho$.}
 \label{Fgermgen}
 \end{figure}
\end{psfrags}
In that case, the germ $\mathcal{G}_{D}(v)$ can be described more concisely: the second point of Definition~\ref{defgermgen} becomes
\begin{equation} \label{linearreformulation}
 \left( \frac{\alpha^{2}}{\rho_{-}} + c^{2} \rho_{-} \right) - \left( \frac{\alpha^{2}}{\rho_{+}} + c^{2} \rho_{+} \right) = \sign(\alpha) D(\rho, \alpha),
\end{equation}
and we still have the two inequalities of Corollary~\ref{sub->sub}
\end{rem}

\begin{rem} System~\eqref{eqregul} may not have any solution continuous on the whole interval $[-\eps/2, \eps/2]$ if $\rho_{-}$ is too close from $\alpha/c$ when $\alpha>0$. In the case of the linear drag force $D(\rho, \alpha)= \lambda \alpha$, where $\lambda \geq 0$ is a friction coefficient, an explicit computation shows that when $\alpha$ is positive there is no solution $\rho_{-}$ belongs to 
$$ \left[ \frac{\alpha}{c} + \frac{\lambda \alpha - \alpha \sqrt{4c \lambda + \lambda^{2}}}{2 c^{2}}, \frac{\alpha}{c} + \frac{\lambda \alpha + \alpha \sqrt{4c \lambda + \lambda^{2}}}{2 c^{2}} \right]. $$
\end{rem}

\subsection{Definition of the solution}
Let us now reformulate the ODE~\eqref{NewtonLaw}. The source term in~\eqref{ActionReaction} is the exact opposite of the left hand side in~\eqref{NewtonLaw}, so the total impulsion is formally conserved through time:
\begin{equation} \label{consq}
 \frac{ d}{dt} \left[ \int_{\R} \rho(t,x) u(t,x) dx + m h'(t) \right]=0.
\end{equation}
We can use this additional property of the model to give a precise definition of the ODE~\eqref{NewtonLaw}.
\begin{prop}
Let $(\rho,u)$ be a solution of~\eqref{MassConservation}-\eqref{ActionReaction} such that for almost every $t>0$, the traces around the particle exist and are such that
$$( (\rho_{-}(t), q_{-}(t)), (\rho_{+}(t), q_{+}(t)) )\in \mathcal{G}_{D}(h'(t)). $$
Then, it satisfies the conservation of total impulsion~\eqref{consq} if and only if for almost every $t>0$,
\begin{equation} \label{newODE}
 mh''(t) = c^{2}(\rho_{-}(t)-\rho_{+}(t)) \left(1 - \frac{(u_{-}(t)-h'(t))(u_{+}(t)-h'(t))}{c^{2}} \right),
\end{equation}
 where the subscripts $\pm$ indicates the left and right traces around the particle: $ \rho_{\pm}(t)= \rho(t,h(t)_{\pm})$.
 \end{prop}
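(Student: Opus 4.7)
The plan is to turn the conservation of total impulsion \eqref{consq} into an explicit expression for $mh''(t)$ in terms of the traces $\rho_\pm(t)$, $u_\pm(t)$ and $h'(t)$, and then use the germ relation (point~1 of Definition~\ref{defgermgen}) to rearrange it into the announced form. The equivalence will follow because the derivation is a chain of reversible equalities once the traces are fixed.

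First I would split the integral in~\eqref{consq} around the particle:
$$ \int_\R \rho u \, dx = \int_{-\infty}^{h(t)} \rho u \, dx + \int_{h(t)}^{+\infty} \rho u \, dx. $$
Differentiating each piece by Leibniz' rule and using the momentum equation~\eqref{ActionReaction} on each open half-line (where the Dirac source vanishes) gives
$$
\frac{d}{dt} \int_\R \rho u \, dx = h'(t)\bigl(q_-(t)-q_+(t)\bigr) + \bigl(\rho_+ u_+^2 + c^2 \rho_+\bigr)(t) - \bigl(\rho_- u_-^2 + c^2 \rho_-\bigr)(t),
$$
assuming as usual that $(\rho,u)$ is constant at $\pm\infty$ so that the boundary terms there cancel. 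Conservation of total impulsion~\eqref{consq} is therefore equivalent to
$$
mh''(t) = h'(t)\bigl(q_+(t)-q_-(t)\bigr) + \bigl(\rho_- u_-^2 + c^2 \rho_-\bigr)(t) - \bigl(\rho_+ u_+^2 + c^2 \rho_+\bigr)(t).
$$

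Second, I would exploit the germ condition, which says that $\alpha(t):=q_-(t)-h'(t)\rho_-(t)=q_+(t)-h'(t)\rho_+(t)$. Writing $v=h'(t)$ temporarily, this gives $q_+-q_- = v(\rho_+-\rho_-)$, so the first term becomes $v^2(\rho_+-\rho_-)$. Grouping with the remaining kinetic terms,
$$
\rho_- u_-^2 - \rho_+ u_+^2 + v^2(\rho_+-\rho_-) = \rho_-(u_--v)(u_-+v) - \rho_+(u_+-v)(u_++v).
$$
Using $\rho_\pm(u_\pm - v)=\alpha$, the right hand side collapses to $\alpha(u_- - u_+)$. Then, writing $u_\pm - v = \alpha/\rho_\pm$, one obtains $\alpha(u_- - u_+) = \alpha^2(\rho_+ - \rho_-)/(\rho_-\rho_+) = -(u_--v)(u_+-v)(\rho_- - \rho_+)$, using the identity $\alpha^2/(\rho_-\rho_+)=(u_--v)(u_+-v)$.

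Finally, substituting this back yields
$$
mh''(t) = c^2(\rho_- - \rho_+) - (u_--v)(u_+-v)(\rho_- - \rho_+) = c^2(\rho_-(t)-\rho_+(t))\left(1 - \frac{(u_-(t)-h'(t))(u_+(t)-h'(t))}{c^2}\right),
$$
which is exactly~\eqref{newODE}. Since every step above is an equivalence (given that the traces exist and the germ relation holds), the proposition follows. The main delicate point is the first step: justifying the use of the momentum equation on $(-\infty,h(t))$ and $(h(t),+\infty)$ and the existence of well-defined boundary traces at $h(t)$ for weak solutions containing shocks; this is why the statement assumes a.e. existence of traces in the germ.
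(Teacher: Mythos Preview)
Your proof is correct and follows essentially the same approach as the paper: both split the integral at $h(t)$, apply the momentum equation on each half-line, and then use the germ relation $\alpha=\rho_\pm(u_\pm-h')$ to reduce the resulting expression to~\eqref{newODE}. Your factoring via $\rho_\pm(u_\pm-v)(u_\pm+v)$ is a slightly slicker variant of the paper's direct substitution $u_\pm=\alpha/\rho_\pm+h'$, but the computation is the same in substance.
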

 
\begin{proof} If $( (\rho_{-}(t), q_{-}(t)), (\rho_{+}(t), q_{+}(t)))$ belongs to $\mathcal{G}_{D}(h'(t))$, then $q_{-}(t) - h'(t) \rho_{-}(t)$ is equal to $q_{+}(t) - h'(t) \rho_{+}(t)$. As usual, we denote by $\alpha(t)$ this quantity, and replace $q= \rho u$ by $\alpha+ h' \rho$. We have
\begin{align*}
mh''(t)	&= -\partial_{t} \int_{\R} \rho(t,x) u(t,x) dx \\
		&= -\partial_{t} \int_{-\infty}^{h(t)} \rho(t,x) u(t,x) dx -\partial_{t} \int_{h(t)}^{+\infty} \rho(t,x) u(t,x) dx  \\
 		&= h'(\rho_{+} u_{+}-\rho_{-}u_{-}) + ( \rho_{-} u_{-}^{2} + c^{2} \rho_{-} ) - ( \rho_{+} u_{+}^{2} + c^{2} \rho_{+} ) \\
		&= h'^{2}( \rho_{+}  - \rho_{-})+  \rho_{-} \left(  \frac{\alpha^{2}}{\rho_{-}^{2}} + 2 h' \frac{\alpha} {\rho_{-}} + h'^{2}   \right) -  \rho_{+} \left(  \frac{\alpha^{2}}{\rho_{+}^{2}} + 2 h' \frac{\alpha}{\rho_{+}} + h'^{2}   \right) + c^{2}(\rho_{-}-\rho_{+}) \\
		&= h'^{2}( \rho_{+}  - \rho_{-}) + \alpha^{2}\left( \frac{1}{\rho_{-}}-\frac{1}{\rho_{+}} \right) + h'^{2}(\rho_{-}-\rho_{+}) + c^{2}(\rho_{-}-\rho_{+})\\
		&= (\rho_{-}-\rho_{+}) \left( c^{2} - \frac{\alpha^{2}}{\rho_{-}\rho_{+}}\right) \\
		&= c^{2}(\rho_{-}-\rho_{+}) \left(1 - \frac{(u_{-}-h')(u_{+}-h')}{c^{2}} \right).  \qedhere
\end{align*}

\end{proof}
\begin{rem}
The  ODE~\eqref{newODE} does not seem to depend on the drag force: this dependence is hidden in the assumption
$$( (\rho_{-}(t), q_{-}(t)), (\rho_{+}(t), q_{+}(t)) ) \in \mathcal{G}_{D}(h'(t)) $$
because the germ does depend on the drag force.
\end{rem}
\begin{rem}
 When the drag force depends only on $\alpha$, which is conserved through the particle, the initial ODE~\eqref{NewtonLaw} makes sense. In that case, it is not difficult to use the more concise description of the germ~\eqref{linearreformulation} to prove that~\eqref{NewtonLaw} and~\eqref{newODE} are equivalent.
\end{rem}

Thanks to the previous reformulation of the ODE and on the entropy inequality~\eqref{EntropyInequality}, we define the entropy solutions of the coupled problem~\eqref{eqcouple}:
\begin{defi}
Assume that $(\rho_{0},q_{0}) ∈ L^{\infty}(\R)^{2}$ and $v_{0} \in \R$. A triplet $(\rho, q, h) \in L^{\infty}(\R_{+} \times \R) \times L^{\infty}(\R_{+} \times \R) \times W^{2, \infty}_{loc}(\R_{+})$ is called an entropy solution of the problem~\eqref{eqcouple} if:
\begin{itemize}
\item $(\rho,q)$ is a weak solution of the isothermal Euler equations on the sets $ \{(t,x) \in \R^{*}_{+} \times \R: x > h(t) \}$ and $\{(t,x) \in \R^{*}_{+} \times \R: x < h(t) \}$.
\item For any entropy-entropy flux pair $(E,G)$ such that $E$ is convex and $\dis \partial_{q} E (\rho, q)= J \left( \frac{q}{\rho} \right)$, for any non-negative test function $\Phi \in \mathcal{C}_{0}^{\infty}(\R_{+}\times \R)$, we have
\begin{equation} \label{EI}
 \begin{aligned}
 \iint_{\R_{+} \times \R}  E(U) \partial_{t} \Phi \, dt \, dx + \iint_{\R_{+} \times \R}  G(U) \partial_{x} \Phi  \, dt \, dx  +  \int_{\R_{+}} m P(h'(t)) \partial_{t}(\Phi(t,h(t))) dt \\
 + \int_{\R} \Phi(0,\cdot) E(U_{0}) \, dx + m P(v_{0}) \Phi(0,v_{0}) \ \ \ \geq 0
\end{aligned}
\end{equation}

where $P$ is a given antiderivative of $J$;
\item For almost every $t>0$, the traces around the particle exist and belong to the germ at speed $h'(t)$:
$$( (\rho(t,h(t)_{-}),q(t,h(t)_{-})), (\rho(t,h(t)_{+}),q(t,h(t)_{+})) ) \in \mathcal{G}_{D}(h'(t)); $$
\item For almost every $t>0$, the particle is driven by the ODE:
$$ mh''(t) = c^{2}(\rho_{-}(t)-\rho_{+}(t)) \left(1 - \frac{(u_{-}(t)-h'(t))(u_{+}(t)-h'(t))}{c^{2}} \right). $$
\begin{rem}
 The entropy inequality~\eqref{EI} implies that the solution is an entropy solution of the Euler equations on the sets $\{x<h(t)\}$ and $\{x>h(t)\}$. Moreover, when the test function $\Phi$ tends to a Dirac measure at $(t, h(t))$ for which the traces exist,~\eqref{EI} yields that
 \begin{equation} \label{DissipEnergy}
 h'(t) (E(U_{-}) - E(U_{+})) - (G(U_{-}) - G(U_{+}))  + m h''(t) P'(h(t)) \leq 0
\end{equation}
 where we denote by $U_{\pm}$ the left and right traces around the particle: 
 $$U_{-}=(\rho(t, h(t)_{-}), q(t, h(t)_{-})).$$
In other words, the total energy is dissipated through the particle. This property is consistent with the Definition~\ref{defgermgen} of the germ and the ODE~\eqref{newODE}. Indeed, if we introduce $\alpha$ and replace $h''$ by its expression in~\eqref{newODE}, Equation~\eqref{DissipEnergy} becomes
$$\alpha \left[ \alpha^{2} \frac{\rho_{-}^{2}- \rho_{+}^{2}}{2 \rho_{-}^{2} \rho_{+}^{2}} + c^{2}( \ln(\rho_{+})- \ln(\rho_{-})) \right] \leq 0.$$
This holds true if $(U^{0}_{-}, U^{0}_{+})$ belongs to $\mathcal{G}_{D}(h'(t^{0}))$. This can be checked by treating sperately the subsonic case and the supersonic case. In the latter case, we have to use that $\rho_{+} \leq \frac{\alpha^{2}}{c^{2}\rho_{-}}$ which is easily deduced from Definition~\ref{defgermgen}.
\end{rem}

 \begin{rem}
 In the case of the scalar conservation law, it is not necessary to require the existence of the traces. Indeed, as the solution is a classic solution on $\{x<h' \}$, strong traces exist (see~\cite{P07} and~\cite{V01}). However, such a result is much harder to obtain in the system case.
\end{rem}
\end{itemize}
\end{defi}

\section{Riemann problem for a particle with a constant fixed velocity} \label{SlinearDF}
In this section we focus on the uncoupled problem where the particle has a constant speed equal to some given $v$ in $\R$. Moreover, we consider a class of very specific initial datum, which consists in piecewise constant functions for the density $\rho$ and  for the momentum $q=\rho u$, with a single discontinuity falling exactly on the initial position of the particle. The problem under study in this section is the Riemann problem:
\begin{equation} \label{pbR}
 \left\{
 \begin{array}{ll}
 \partial_t \rho + \partial_x q &= 0, \\
 \partial_t q + \partial_x \left( \frac{q^2}{\rho} +c^2 \rho \right) &= -D(\rho, \alpha) \delta_{v t}, \\
 \rho(0,x) &=\rho_{L} \mathbf{1}_{x<0}+\rho_{R} \mathbf{1}_{x>0}, \\
 q(0,x) &=q_{L} \mathbf{1}_{x<0}+q_{R} \mathbf{1}_{x>0},
\end{array} 
\right. 
\end{equation}
where $(\rho_{L},q_{L})$ and $(\rho_{R},q_{R})$ belong to $\R_{+}^{*} \times \R$. We recall once for all that $\alpha$ denotes the quantity $q-v\rho$. As the particle's trajectory is a straight line, we look for self-similar solutions of~\eqref{pbR}, i.e. solutions that only depend on $x/t$. This section is devoted to the proof of the following theorem.
\begin{thm} \label{allfriction2} Consider a  drag force
$$
\begin{array}{llll}
 D: &\R_{+}^{*} \times \R & \longrightarrow & \R \\
 &(\rho, \alpha) & \mapsto & D(\rho, \alpha)
\end{array}
$$
having the same sign as $\alpha$, vanishing in $\alpha=0$ and $\mathcal{C}^{1}$. Suppose that  $D$ is an increasing function of $\alpha$ and that $|D|$ is a decreasing function of $\rho$. Then  for all states $(\rho_{L},q_{L})$ and $(\rho_{R},q_{R})$ in $\R_{+}^{*}\times \R$ and for every particle velocity $v$ in $\R$, the Riemann problem~\eqref{pbR} has a unique self-similar solution.
\end{thm}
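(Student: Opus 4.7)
The plan is to construct the self-similar solution by gluing two standard Lax wave fans of the isothermal Euler equations across the particle trajectory $x=vt$. Passing to the moving frame $\xi=x-vt$ and using that $\alpha=\rho(u-v)$ is conserved through the interface, the unknowns reduce to a pair of traces $((\rho_-,q_-),(\rho_+,q_+))\in\mathcal{G}_D(v)$, a wave fan connecting $(\rho_L,q_L)$ to $(\rho_-,q_-)$ using only waves of speed at most $v$, and a wave fan connecting $(\rho_+,q_+)$ to $(\rho_R,q_R)$ using only waves of speed at least $v$.

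First I would describe explicitly the set $\mathcal{L}(U_L;v)$ of states reachable from $U_L=(\rho_L,q_L)$ by an admissible left fan, and symmetrically $\mathcal{R}(U_R;v)$. Since the wave families carry speeds $u\pm c$, the structure of these sets depends on the position of $U_{L,R}$ relative to the sonic lines $u=v\pm c$: in the subsonic regime $|u_L-v|<c$ only a single half-wave (of the $2$-family on the left, $1$-family on the right) can cross the particle, while in the supersonic regime a full $1$--$2$ fan may be forced to remain on one side. This case analysis naturally separates the subsonic--subsonic and supersonic--supersonic situations already covered by Propositions~\ref{RSubSub} and~\ref{RSupSup}, leaving the mixed sub/supersonic cases to be handled by the same monotone-matching argument.

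Next I would reduce the problem to a one-parameter scalar equation. Parametrize the germ by $\alpha$; for each $\alpha$, Definition~\ref{defgermgen} determines the compatible densities $\rho_\pm$ (up to the internal-shock parameter $\theta\in[0,1]$) by $F_\alpha(\rho_-)-F_\alpha(\rho_+)=\sign(\alpha)$. The hypotheses on $D$ enter precisely here: $D$ strictly increasing in $\alpha$ and $|D|$ strictly decreasing in $\rho$ translate, via the explicit form of $F_\alpha$, into strict monotonicity of the map $\alpha\mapsto\rho_+$ along each admissible branch of the germ. Composing with the strictly monotone Lax wave curves of the isothermal Euler equations produces a scalar equation in one real parameter whose left-hand side is strictly monotone. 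Uniqueness is then immediate, and existence follows from a continuity/exhaustion argument at the endpoints of the parameter range.

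The main obstacle will be the resonant configurations, where an initial state lies on or near a sonic line $u=v\pm c$, or where one of the sonic thresholds $c\rho_\pm=|\alpha|$ becomes active inside the germ. There the Lax wave curves degenerate, the subsonic and supersonic branches of the germ meet, and the internal-shock parameter $\theta$ activates. I would check that the monotone matching extends continuously across these transitions and that the several branches of $\mathcal{G}_D(v)$ are parametrized consistently, so that the resulting scalar equation retains global strict monotonicity. The contrast with the scenarios of Section~\ref{SMultipleSolutions}, where up to three solutions coexist when one of the hypotheses is dropped, confirms that this is exactly the step where both monotonicity assumptions are essential.
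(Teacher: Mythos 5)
Your overall architecture --- accessible sets on each side of the particle, intersection with the germ $\mathcal{G}_D(v)$, and a monotone matching in the conserved quantity $\alpha$ --- is the paper's, and your subsonic treatment is essentially Proposition~\ref{RSubSub}: the matching function $\Delta(\alpha)=F_\alpha(g_-^{sub}(\alpha))-F_\alpha(g_+^{sub}(\alpha))$ is strictly monotone because $D$ is increasing in $\alpha$ (Lemma~\ref{triangle}), and existence comes from $\Delta\to+\infty$ as $\alpha\to 0$ together with $\Delta(\alpha_0)=0$ at the crossing point of the two wave curves. One slip in your description of the fans: in the subsonic regime it is the $1$-waves, of speed $u-c<v$, that stay on the left of the particle and the $2$-waves, of speed $u+c>v$, that stay on the right, not the reverse.

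The genuine gap is in the supersonic case and concerns the role of the hypothesis that $|D|$ is decreasing in $\rho$. You assert that this hypothesis yields ``strict monotonicity of $\alpha\mapsto\rho_+$,'' reducing everything to one strictly monotone scalar equation in $\alpha$. But the three-solution pathology of Section~\ref{SMultipleSolutions} is not a failure of monotonicity in $\alpha$: two of the three spurious solutions there occur at the \emph{same} value $\alpha=\alpha_L$ (one with a supersonic right trace $\rho_0$, one with a subsonic right trace produced by an internal shock at some $\theta\in[0,1]$ inside the thickened particle), while only the third, fully subsonic one lives at a different $\alpha$. At fixed $\alpha$ with a supersonic left trace the germ is genuinely multivalued --- the admissible exit densities form a whole interval $[\rho_{0,+},\rho_{1,+}]$ --- so no parametrization by $\alpha$ alone can detect the problem. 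What the $\rho$-monotonicity of $|D|$ actually buys is the fixed-$\alpha$ inequality~\eqref{croissance}, namely $F_\alpha(\rho_1)-F_\alpha(\rho_2)\le F_\alpha(\tilde{\rho_1})-F_\alpha(\tilde{\rho_2})$ for $\rho_1<\rho_2\le|\alpha|/c$ with $\tilde{\rho}=\alpha^2/(c^2\rho)$, which orders $\rho_{0,+}\le\rho_{1,+}$ relative to $\tilde{\rho_E}$ and makes the branches mutually exclusive: if the purely supersonic crossing from $(\rho_L,\alpha_L)$ is admissible then $\Delta(\alpha_L)>1$ and no subsonic solution can exist, and conversely. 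This dichotomy (Proposition~\ref{RSupSup} and Appendix~\ref{SFrictions}) is the heart of the uniqueness argument and is precisely the step your plan defers to ``checking that the branches are parametrized consistently''; without identifying~\eqref{croissance} as the mechanism, that check is where the proof would fail.
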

\begin{ex} All frictions of the form
$$\Gamma(\rho,u, h')= \rho^{n} |u-h'|^{m-1} (u-h')= \rho^{n-m} |\alpha|^{m-1} \alpha $$
with $n \geq 0$, $m \geq 1$ and $m \geq n$, verify the two conditions of Theorem~\ref{allfriction2}.
\end{ex}
Let $(\rho, q)$ be a self-similar solution of~\eqref{pbR} and denote by $(\rho_{-},q_{-})$ (respectively $(\rho_{+},q_{+})$)  the traces of the density and the momentum on the left (respectively on the right) of the particle, i.e. on the line $(t, (vt)_{-})$ (respectively on the line $(t, (vt)_{+})$). Then  
$$(\rho|_{L}, q |_{L}) = 
\begin{cases}
 (\rho, q) & \textrm{ on } x< vt \\
 (\rho_{-}, q_{-}) & \textrm{ on } x \geq vt
\end{cases}
\ \ \ \textrm{ and }
(\rho|_{R}, q |_{R}) =
\begin{cases}
 (\rho_{+}, q_{+}) & \textrm{ on } x \leq vt \\
 (\rho, q) & \textrm{ on } x > vt
\end{cases}
$$
are self-similar solution of the \emph{classical} isothermal Euler equations (without source term) for the initial datum
$$ (\rho(0,x), q(0,x))= ( \rho_{L} \mathbf{1}_{x<0}+\rho_{-} \mathbf{1}_{x>0}, q_{L} \mathbf{1}_{x<0}+q_{-} \mathbf{1}_{x>0}) $$
and
$$ (\rho(0,x), q(0,x))= ( \rho_{+} \mathbf{1}_{x<0}+\rho_{R} \mathbf{1}_{x>0}, q_{+} \mathbf{1}_{x<0}+q_{R} \mathbf{1}_{x>0} ).$$
Therefore, on $\{ (t,x), x<vt \}$, a solution of~\eqref{pbR}, is just the restriction on this set of a Riemann problem for the Euler equations. The same holds true on $\{ (t,x), x>vt \}$, for a different Riemann problem.
%

 In Section~\ref{acc states} we describe the set $U_{-}(\rho_{L},q_{L},v)$ of all the values that a Riemann solution for the classical Euler equations between $(\rho_{L},q_{L})$ and a state $(\underline{\rho},\underline{q}) \in \R_{+} \times \R$ can take on the line $x=vt$ and the set $U_{+}(\rho_{R},q_{R},v)$ of all the values that a Riemann solution  for the classical Euler equations between $(\bar{\rho},\bar{q}) \in \R_{+} \times \R$ and $(\rho_{R},q_{R})$ can take on the line $x=vt$. The traces $(\rho_{-},q_{-})$ and $(\rho_{+},q_{+})$ around the particle should be respectively chosen in those sets. The existence and uniqueness to the Riemann problem~\eqref{pbR} boils down to prove that there is a unique way to pick $(\rho_{-},q_{-})$ in $U_{-}(\rho_{L},q_{L},v)$ and $(\rho_{+},q_{+})$ in $U_{+}(\rho_{R},q_{R},v)$ such that
$((\rho_{-},q_{-}), (\rho_{+},q_{+}))$ belongs to the germ $\mathcal{G}_{D}(v)$. 
 In Section~\ref{pbR sub}, we prove that if $u_{L}-v \leq c$ and $u_{R}-v \geq -c$, any potential traces $(\rho_{\pm},q_{\pm})$ around the particle inherits the property $|u_{\pm}-v| \leq c$ and conclude in that case. It will be referred to as the subsonic case.  The other case, referred to as the supersonic case and studied in Section~\ref{pbR sup}, is more complicated because different types of solutions arise. 
\begin{rem}
 In all the sequel, the terms \emph{subsonic} and \emph{supersonic} are used \emph{in the framework of the particle}. For example we say that a solution is subsonic if the difference between the velocity of the particle and the fluid's velocity on both side of the particle is smaller than the speed of sound $c$.
\end{rem}

\subsection{Accessible states around the particle} \label{acc states}
Let us start with some very classical results on the isothermal Euler equations without source term that will be useful to determine the solution of~\eqref{pbR}.
\begin{lemma} \label{Euler1}
The isothermal Euler equations without source term
$$
\left\{
 \begin{array}{ll}
 \partial_t \rho + \partial_x q &= 0, \\
 \partial_t q + \partial_x (\frac{q^2}{\rho} +c^2 \rho) &= 0,
 \end{array}
\right. 
$$
is a strictly hyperbolic system. The eigenvalues of its Jacobian matrix are 
$$ \lambda_i( \rho, q)= \frac{q}{\rho}+ (-1)^i c $$
and the corresponding normalized eigenvectors are
$$ r_i( \rho, q)= \left( \begin{array}{c}
                          \frac{(-1)^i \rho}{c} \\
			  \frac{(-1)^i q}{c} + \rho
                         \end{array}
                           \right) . 
$$
They define two genuinely nonlinear fields. 
The $i$-th rarefaction waves express
$$ \rho(s)= \left\{
\begin{array}{ll}
  \rho_L 								& \mbox{ if } s \leq s_L= \frac{q_L}{\rho_L} + (-1)^i c, \\
  \rho_L e^{\frac{(-1)^i}{c}(s-s_L)} 			& \mbox{ if }  s_L \leq s \leq s_R, \\
  \rho_R= \rho_L e^{\frac{(-1)^i}{c}(s_R-s_L)} 	& \mbox{ if }  s_R \leq s, 
\end{array}
\right. $$
and
$$ q(s)= \left\{
\begin{array}{ll}
 q_L  													& \mbox{ if } s \leq s_L= \frac{q_L}{\rho_L} + (-1)^i c, \\
 \left[ q_L + \rho_L (s-s_L) \right] e^{\frac{(-1)^i}{c}(s-s_L)}  			& \mbox{ if }  s_L \leq s \leq s_R, \\
 q_R= \left[ q_L + \rho_L (s_{R}-s_L) \right] e^{\frac{(-1)^i}{c}(s_R-s_L)} 	& \mbox{ if }  s_R \leq s .
\end{array}
\right. $$
The speed of the $i$-shock is
$$ \sigma_{i}= \frac{q_L}{\rho_L}+ (-1)^i c \sqrt{ \frac{ \rho_R}{\rho_L} } $$
and shocks are entropy satisfying if and only if $u_{L}\geq u_{R}$.
\end{lemma}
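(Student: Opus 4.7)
The statement packages together several standard facts about the 1D isothermal Euler system, so the plan is essentially a guided computation broken into the items that appear in the lemma. Writing $q=\rho u$, I would first compute the Jacobian of the flux $F(\rho,q)=(q,\,q^{2}/\rho+c^{2}\rho)$ as
\[
 DF(\rho,q)=\begin{pmatrix} 0 & 1 \\ c^{2}-u^{2} & 2u \end{pmatrix},
\]
whose characteristic polynomial is $(\lambda-u)^{2}-c^{2}=0$, giving $\lambda_{i}=u+(-1)^{i}c$ and proving strict hyperbolicity since $c>0$. For each $\lambda_{i}$, solving $(DF-\lambda_{i}I)r=0$ forces the second component of $r$ to equal $\lambda_{i}$ times the first; the scaling in the statement (first component $(-1)^{i}\rho/c$) is then obtained by requiring the normalization $\nabla\lambda_{i}\cdot r_{i}=1$. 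I would verify this normalization directly: $\nabla\lambda_{i}=(-q/\rho^{2},\,1/\rho)$ and the dot product with the stated $r_{i}$ collapses to $1$, which simultaneously establishes that both fields are genuinely nonlinear.

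For the rarefaction waves I would look for a self-similar solution $(\rho(s),q(s))$ with $s=x/t$ satisfying $DF(\rho,q)(\rho',q')^{T}=s(\rho',q')^{T}$. This forces $(\rho',q')$ to be parallel to $r_{i}$ at the eigenvalue $\lambda_{i}=s$, and the normalization $\nabla\lambda_{i}\cdot r_{i}=1$ gives $(\rho',q')=r_{i}(\rho,q)$ with the parameter $s=\lambda_{i}(\rho,q)$. This yields the system
\[
 \rho'(s)=\frac{(-1)^{i}}{c}\rho(s),\qquad q'(s)=\frac{(-1)^{i}}{c}q(s)+\rho(s),
\]
with initial condition at $s=s_{L}=u_{L}+(-1)^{i}c$. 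The first equation integrates to the exponential formula for $\rho$, and substituting it into the second gives a first-order linear ODE for $q$ whose solution is $q(s)=[q_{L}+\rho_{L}(s-s_{L})]e^{(-1)^{i}(s-s_{L})/c}$; outside the fan one simply extends by the constant boundary values.

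The shock-speed formula comes from Rankine--Hugoniot. I would introduce the mass flux in the shock frame $m:=\rho_{L}(u_{L}-\sigma)=\rho_{R}(u_{R}-\sigma)$ (this is the first jump relation) and plug into the momentum relation, which reduces after cancellation to
\[
 m^{2}\!\left(\tfrac{1}{\rho_{L}}-\tfrac{1}{\rho_{R}}\right)=c^{2}(\rho_{R}-\rho_{L}),
\]
so $m^{2}=c^{2}\rho_{L}\rho_{R}$ and $\sigma=u_{L}+(-1)^{i}c\sqrt{\rho_{R}/\rho_{L}}$, the sign picking out the $1$- or $2$-family. Finally, the entropy criterion follows from Lax's condition $\lambda_{i}(U_{L})>\sigma>\lambda_{i}(U_{R})$: a direct substitution shows that $\lambda_{i}(U_{L})>\sigma$ is equivalent to $\rho_{R}>\rho_{L}$ for both families, and combining this with the expression $u_{R}-u_{L}=c(\sqrt{\rho_{L}/\rho_{R}}-\sqrt{\rho_{R}/\rho_{L}})$ derived from the two RH relations gives exactly $u_{L}\ge u_{R}$.

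None of the steps is genuinely hard; the only mild care goes to keeping the sign conventions consistent (in particular matching the $(-1)^{i}$ with the correct choice of root of $m^{2}=c^{2}\rho_{L}\rho_{R}$), and to verifying the equivalence between $\rho_{R}>\rho_{L}$ and $u_{L}>u_{R}$ for both $i=1$ and $i=2$. I would present the proof as four clearly labelled blocks (spectrum and eigenvectors, genuine nonlinearity, rarefactions, shocks and admissibility) so the reader can match them against the four claims of the lemma.
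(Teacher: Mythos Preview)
The paper does not actually prove this lemma; it simply records it as classical and refers to \cite{GR96}. Your sketch is precisely the standard textbook computation one would find there, and the four blocks (spectrum and eigenvectors, genuine nonlinearity, rarefaction ODE, Rankine--Hugoniot and Lax admissibility) are all set up correctly.

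One small point to tighten in the last block: the claim that ``$\lambda_{i}(U_{L})>\sigma$ is equivalent to $\rho_{R}>\rho_{L}$ for both families'' is not quite right---for $i=2$ the inequality $u_{L}+c>u_{L}+c\sqrt{\rho_{R}/\rho_{L}}$ gives $\rho_{R}<\rho_{L}$---and your displayed formula for $u_{R}-u_{L}$ is likewise missing a factor $(-1)^{i}$. The two slips happen to cancel, so the final conclusion $u_{L}\ge u_{R}$ is correct, but when you write it out you should treat $i=1$ and $i=2$ separately (as you yourself warn at the end of the proposal).
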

We recall below a well-known result on the structure of the Riemann solution.
\begin{prop} For all $(\rho_{a},q_{a})$ and $(\rho_{b},q_{b})$ in $\R_{+}^{*} \times \R$,  there exists a unique self-similar entropy solution of the Riemann problem
 \begin{equation*} 
 \left\{
 \begin{array}{ll}
 \partial_t \rho + \partial_x q &= 0, \\
 \partial_t q + \partial_x \left( \frac{q^2}{\rho} +c^2 \rho \right) &=0, \\
 \rho(0,x) &=\rho_{a} \mathbf{1}_{x<0}+\rho_{b} \mathbf{1}_{x>0}, \\
 q(0,x) &=q_{a} \mathbf{1}_{x<0}+q_{b} \mathbf{1}_{x>0}. \\
\end{array} 
\right. 
\end{equation*}
It consists of the succession of a $1$-wave (rarefaction or shock) linking $(\rho_{L}, q_{L})$ to an intermediate state $(\rho_{*},q_{*})$ followed by a $2$-wave (rarefaction or shock) linking $(\rho_{*}, q_{*})$ to $(\rho_{R},q_{R})$. We denote by $$W(x/t; (\rho_{a},q_{a}), (\rho_{b},q_{b}))$$
this unique solution.
\end{prop}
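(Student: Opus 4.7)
The plan is to use the classical wave-curve construction of the Riemann solution for a strictly hyperbolic $2\times 2$ system with genuinely nonlinear characteristic fields. Lemma~\ref{Euler1} supplies all of the ingredients: the eigenvalues $\lambda_{1,2}=u\pm c$ are separated by $2c>0$, each field is genuinely nonlinear, the rarefaction parametrizations are explicit, and the entropy (Lax) condition for shocks reduces to the velocity inequality $u_L\ge u_R$.

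First, I would compute the forward $1$-wave curve through $(\rho_a,u_a)$: the set of right-states $(\rho_*,u_*)$ that are connected to $(\rho_a,u_a)$ on the left by either a $1$-rarefaction (when $\rho_*\le\rho_a$) or an entropy $1$-shock (when $\rho_*\ge\rho_a$). Using the formulas in Lemma~\ref{Euler1}, one obtains the single-variable parametrization
\begin{equation*}
u_*=\phi_1(\rho_*;\rho_a,u_a):=\begin{cases} u_a+c\ln(\rho_a/\rho_*) & 0<\rho_*\le\rho_a,\\ u_a-c\bigl(\sqrt{\rho_*/\rho_a}-\sqrt{\rho_a/\rho_*}\bigr) & \rho_*\ge\rho_a.\end{cases}
\end{equation*}
A direct check shows that $\phi_1$ is continuous and $\mathcal{C}^1$ at $\rho_*=\rho_a$ (both branches give the derivative $-c/\rho_a$), strictly decreasing on $(0,+\infty)$, and tends to $+\infty$ as $\rho_*\to 0^+$ and to $-\infty$ as $\rho_*\to+\infty$. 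By the symmetry between the two families, I would analogously define the backward $2$-wave curve $\phi_2(\rho_*;\rho_b,u_b)$ collecting the left-states from which $(\rho_b,u_b)$ is reached by a $2$-rarefaction ($\rho_*\le\rho_b$) or an entropy $2$-shock ($\rho_*\ge\rho_b$); $\phi_2$ is strictly increasing on $(0,+\infty)$ with $\phi_2\to-\infty$ at $0^+$ and $\phi_2\to+\infty$ at $+\infty$.

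The intermediate state $(\rho_*,u_*)$ of any Riemann solution must lie on both curves, so $\phi_1(\rho_*)=\phi_2(\rho_*)$. Since $\phi_1-\phi_2$ is continuous, strictly decreasing, and ranges from $+\infty$ to $-\infty$ on $(0,+\infty)$, this equation has exactly one solution $\rho_*>0$, which then fixes $u_*$. The solution $W$ is assembled by juxtaposing, from left to right: the constant state $(\rho_a,u_a)$, the $1$-wave from $(\rho_a,u_a)$ to $(\rho_*,u_*)$, the constant state $(\rho_*,u_*)$, the $2$-wave from $(\rho_*,u_*)$ to $(\rho_b,u_b)$, and the constant state $(\rho_b,u_b)$.

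The main obstacle is not the uniqueness of the intermediate state (which is immediate from monotonicity) but verifying that this assembly actually produces a well-defined self-similar weak entropy solution, namely that the $1$-wave and the $2$-wave do not overlap in $x/t$. For this one checks that every $x/t$ arising in the $1$-wave lies below $u_*-c$ while every $x/t$ arising in the $2$-wave lies above $u_*+c$: for rarefactions this follows from the explicit $s$-parametrization of Lemma~\ref{Euler1}, and for shocks the Lax inequalities $\lambda_1(\rho_a,u_a)\ge \sigma_1\ge\lambda_1(\rho_*,u_*)$ and $\lambda_2(\rho_*,u_*)\ge\sigma_2\ge\lambda_2(\rho_b,u_b)$ supply the required ordering around $u_*\pm c$. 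Uniqueness in the class of self-similar entropy solutions then follows because any such solution, by strict hyperbolicity and the separation $\lambda_2-\lambda_1=2c>0$, must consist of a single $1$-wave followed by a single $2$-wave separated by a constant state, and is therefore determined by its intermediate state, which we have shown to be unique. A feature of the isothermal case worth noting is the absence of vacuum: since $\phi_1$ and $\phi_2$ blow up in opposite directions as $\rho_*\to 0^+$, the intersection always occurs at a strictly positive density, regardless of how large $u_b-u_a$ is.
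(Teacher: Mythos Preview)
Your argument is correct and is precisely the standard wave-curve construction for a $2\times2$ genuinely nonlinear system; the paper itself does not give a proof but simply cites the classical reference~\cite{GR96}, where exactly this method is carried out. In that sense your proposal is not a different route but rather an explicit unpacking of the cited argument, with the pleasant bonus that you note the isothermal-specific feature that the intersection of the wave curves always occurs at $\rho_*>0$, so vacuum never appears.
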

\begin{proof}
 The proof of those two results are classical and can be found for example in~\cite{GR96}.
\end{proof}
Those tools allow us to describe the set of the accessible states from $(\rho_{L},q_{L})$ on the left of a particle moving at speed $v$
 $$ \dis U_{-}(\rho_{L},q_{L},v) = \left\{ W(v_{-}; (\rho_{L},q_{L}), (\underline{\rho},\underline{q})),  \  (\underline{\rho},\underline{q})  \in \R_{+}^{*} \times \R \right\}. $$
 and the set of the accessible states  from $(\rho_{R},q_{R})$ on the right of a particle moving at speed $v$ 
 $$ \dis U_{+}(\rho_{R},q_{R},v) = \left\{ W(v_{+}; (\bar{\rho},\bar{q}), (\rho_{R},q_{R})),  \  (\bar{\rho},\bar{q})  \in \R_{+}^{*} \times \R \right\} $$
 in which the left and right traces around the particle must be chosen. According to Definition~\ref{defgermgen}, the quantity $q-v \rho$ must be conserved through the particle, so it is more convenient to reason with the variables $(\rho, q-v\rho)$ rather than with the initial unknowns $(\rho, q)$. We introduce the sets
 \begin{equation} \label{V-}
 \dis V_{-}(\rho_{L},\alpha_{L},v) = \left\{ (\rho_{-}, q_{-}-v \rho_{-}) \, : \,  (\rho_{-},q_{-}) \in  \dis U_{-}(\rho_{L},\alpha_{L}+ v \rho_{L},v)  \right\} 
\end{equation}
and
\begin{equation} \label{V+}
  \dis V_{+}(\rho_{R},\alpha_{R},v) =  \left\{ (\rho_{+}, q_{+}-v \rho_{+}) \, : \,  (\rho_{+},q_{+}) \in  \dis U_{+}(\rho_{R},\alpha_{R}+ v \rho_{R},v)  \right\}.
\end{equation}
Proving that there exists a unique solution to the Riemann problem~\eqref{pbR} is equivalent to prove that
$$ \dis \mathcal{G}_{D}(v) \cap  \left( U_{-}(\rho_{L},q_{L},v) \times U_{+}(\rho_{R},q_{R},v) \right)$$
consists in a unique pair of states $\left( (\rho_{-},q_{-}), (\rho_{+},q_{+}) \right)$. We now give a precise description of the sets $\dis V_{-}(\rho_{L},\alpha_{L},v)$ and $ \dis  V_{+}(\rho_{R},\alpha_{R},v)$.

\begin{lemma} \label{U-}
 Let $(\rho_{L},\alpha_{L}) \in \R_{+}^{*} \times \R$  and $v \in \R$.   Then
 $$ \dis V_{-}(\rho_{L},\alpha_{L},v) = \{ (\rho_{L}, \alpha_{L} ) \} \cup \Gamma_{-}^{sub} \cup \Omega_{-}^{sup}, $$
 where  $\Gamma_{-}^{sub}$ is the graph of a decreasing function $f_{-}^{sub}: [\rho_{L, ex}, +\infty ] \rightarrow \R$ for some $\rho_{L,ex}>0$, included in  $\{ (\rho, \alpha): -c \rho \leq \alpha  \leq c \rho  \}$, and $\Omega_{-}^{sup}$ is the strict hypograph  of a decreasing function $f_{-}^{sup}: (0, +\infty ) \rightarrow \R$, included in  $\{ (\rho, \alpha): \alpha < -c \rho  \}$.
\end{lemma}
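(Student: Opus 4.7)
The plan is to reduce to the case $v=0$ via a Galilean change of frame: replace $u$ by $\tilde u = u - v$, so that $V_{-}(\rho_L,\alpha_L,v)$ becomes the set of traces at $\xi = x/t = 0^-$ of a Riemann solution with left state $(\rho_L,\tilde u_L)$ (where $\tilde u_L = \alpha_L/\rho_L$) and arbitrary right state $(\underline\rho, \underline{\tilde u})$. By Lemma~\ref{Euler1}, this solution consists of a 1-wave from $(\rho_L,\tilde u_L)$ to an intermediate state $(\rho_*,\tilde u_*)$ followed by a 2-wave to $(\underline\rho, \underline{\tilde u})$. I would then enumerate the five possible positions of $\xi = 0$ relative to the resulting fan---left of the fan, strictly inside a 1-rarefaction, at the constant intermediate state, strictly inside a 2-rarefaction, right of the fan---and read off the trace in each case.

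The first configuration yields the singleton $\{(\rho_L,\alpha_L)\}$. The third yields $\Gamma_{-}^{sub}$: the conditions $\sigma_1^+ \leq 0$ and $\sigma_2^- = \tilde u_* + c \geq 0$ together force $\tilde u_* \in [-c,c]$, so the intermediate state is subsonic. To check that this gives a decreasing graph in the variables $(\rho,\alpha)$, I parametrize the forward 1-wave Lax curve and compute: along the 1-rarefaction branch, $\tilde u_*(\rho_*) = \tilde u_L + c\ln(\rho_L/\rho_*)$ gives $d\alpha_*/d\rho_* = \tilde u_* - c \leq 0$ in the subsonic regime; along the 1-shock branch, Rankine--Hugoniot together with the shock speed formula of Lemma~\ref{Euler1} yields $\alpha_* - \alpha_L = \sigma(\rho_* - \rho_L)$ with $\sigma \leq 0$ on the admissible arc, so $\alpha_*$ again decreases with $\rho_*$. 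The endpoint $\rho_{L,ex}$ is the density at the sonic value $\tilde u_* = c$, reached along a 1-rarefaction when $\tilde u_L < c$ and along a 1-shock otherwise.

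The two ``inside a rarefaction'' configurations put the trace on the sonic curves $\alpha = \pm c\rho$, while the last configuration sets the trace equal to the right state. Checking $\sigma_2^+ \leq 0$ for a 2-rarefaction (which gives $\underline{\tilde u} + c \leq 0$ directly) and for an entropic compressive 2-shock (using $\sqrt{\underline\rho/\rho_*} \geq 1$ to conclude $\tilde u_* \leq -c$ and hence $\underline{\tilde u} \leq \tilde u_* \leq -c$) shows that the right states accessible in this configuration are all supersonic-left. The upper envelope of these traces, traced out by the limiting case $\sigma_2^+ = 0$ together with the inside-2-rarefaction contributions lying on $\alpha = -c\rho$, defines $f_{-}^{sup}$; its monotonicity will follow from that of the 2-wave curves and the explicit shock speed formula.

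The main obstacle will be the analysis of $\Omega_{-}^{sup}$. Unlike $\Gamma_{-}^{sub}$, which is a one-parameter family coming from the 1-wave alone, $\Omega_{-}^{sup}$ is the joint image of a two-parameter family: the intermediate state along the 1-wave curve, and the right state along the 2-wave curve from that intermediate. Showing that this image is exactly a strict hypograph with a single decreasing upper boundary requires continuity and intermediate-value arguments applied to the composition of the 1- and 2-wave curves, a careful case split between 2-rarefaction and 2-shock regimes, and a verification that the admissibility condition $\sigma_1^+ \leq 0$ does not cut out additional gaps in the resulting region.
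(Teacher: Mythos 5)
Your overall route is the same as the paper's: decompose the Riemann solution into a 1-wave followed by a 2-wave, keep only the waves travelling slower than the particle, and read off the trace on $x=vt$; the Galilean reduction to $v=0$ and the enumeration of the five positions of $\xi=v$ in the fan are cosmetic reorganizations of that argument. The treatment of $\Gamma_{-}^{sub}$ is essentially correct, with two small caveats: monotonicity along the shock branch needs the derivative $d\alpha_*/d\rho_*=\sigma-\frac{c(\rho_*-\rho_L)}{2\sqrt{\rho_*\rho_L}}\leq 0$, not just the secant identity $\alpha_*-\alpha_L=\sigma(\rho_*-\rho_L)$; and the sonic traces on $\alpha=-c\rho$ produced inside 2-rarefactions must be assigned to $\Gamma_{-}^{sub}$ (they are the piece $f_{-}^{sub}(\rho)=-c\rho$ for $\rho\geq\rho_{-c}$), since $\Omega_{-}^{sup}$ is a \emph{strict} hypograph contained in $\{\alpha<-c\rho\}$.

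There are, however, two genuine gaps. First, your 2-shock admissibility argument is wrong: for an entropic 2-shock with left state $(\rho_*,\tilde u_*)$ and right state $(\underline\rho,\underline{\tilde u})$ one has $\underline\rho<\rho_*$, hence $\sqrt{\underline\rho/\rho_*}<1$, not $\geq 1$; consequently $\sigma_2=\tilde u_*+c\sqrt{\underline\rho/\rho_*}\leq 0$ does \emph{not} force $\tilde u_*\leq -c$. Negative-speed 2-shocks do emanate from subsonic intermediate states with $-c<\tilde u_*<0$, and this is exactly the case the paper treats separately: it shows the reachable densities are then bounded above by $\alpha_*^2/(c^2\rho_*)$ and proves, via the monotonicity of $\rho\mapsto \frac{\alpha_*}{\rho_*}+c\sqrt{\rho/\rho_*}-c\sqrt{\rho_*/\rho}$, that the resulting traces still satisfy $\alpha\leq -c\rho$. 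Your argument silently excludes this case, so the inclusion $\Omega_{-}^{sup}\subset\{\alpha<-c\rho\}$ is not established. Second, the substantive content of the lemma --- that the two-parameter family of traces reached by a 1-wave followed by a 2-wave is \emph{exactly} the strict hypograph of a single decreasing function $f_{-}^{sup}$ --- is announced as ``the main obstacle'' and deferred rather than proved. The paper closes this by observing that the 2-wave curves issued from distinct intermediate states are portions of Lax curves which fill the half-plane $\R_{+}^{*}\times\R$ without crossing, and that for each intermediate state the extremal reachable density is the one attained in the limit of a stationary shock (speed $v$), which pins the upper boundary to $\rho\mapsto f_{-}(\rho)^{2}/(c^{2}\rho)$. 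Without some version of these two facts, the ``hypograph with no gaps, decreasing upper boundary'' claim remains an assertion.
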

\begin{lemma} \label{U+}
 Let $(\rho_{R},\alpha_{R}) \in \R_{+}^{*} \times \R$  and $v \in \R$. Then
 $$ \dis V_{+}(\rho_{R},\alpha_{R},v) = \{ (\rho_{R}, \alpha_{R} ) \} \cup \Gamma_{+}^{sub} \cup \Omega_{+}^{sup} $$
 where  $\Gamma_{+}^{sub}$ is the graph of a increasing function $f_{+}^{sub}: [\rho_{R, ex}, +\infty ] \rightarrow \R$ for some $\rho_{R,ex}>0$, included in  $\{ (\rho, \alpha): -c \rho \leq \alpha  \leq c \rho  \}$, and $\Omega_{+}^{sup}$ is the strict epigraph  of a increasing function $f_{+}^{sup}: (0, +\infty ) \rightarrow \R$, included in  $\{ (\rho, \alpha): \alpha > c \rho  \}$.
\end{lemma}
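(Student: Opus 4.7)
The cornerstone is the following characterization, which I would establish first: $(\rho_+,\alpha_+)$ belongs to $V_+(\rho_R,\alpha_R,v)$ if and only if, in the classical Euler Riemann problem between $(\rho_+,q_+)$ and $(\rho_R,q_R)$ (with $q_\pm := \alpha_\pm + v\rho_\pm$), every wave speed is at least $v$. One implication follows by taking $(\bar\rho,\bar q)=(\rho_+,q_+)$ in the definition of $U_+$; the converse follows from the uniqueness of the self-similar entropy solution of Euler, since the trace at $x/t=v_+$ together with the right state $(\rho_R,q_R)$ determines the restriction of the fan to $\{x/t>v\}$. This reduces the lemma to a purely geometric description of Lax wave curves.

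I then split the analysis according to whether the $1$-wave in the Riemann problem from $(\rho_+,q_+)$ to $(\rho_R,q_R)$ is trivial. If it is, $(\rho_+,q_+)$ lies on the backward $2$-wave curve $\mathcal{W}_2$ through $(\rho_R,q_R)$ and only the slowest speed of the $2$-wave must be $\geq v$. If it is nontrivial, its minimum speed is at most $u_+-c$ (with equality at the leftmost characteristic of a $1$-rarefaction, and strictly less for an entropy $1$-shock since $\rho_*>\rho_+$ by Lemma~\ref{Euler1}), so the constraint $\geq v$ forces $u_+-v\geq c$, i.e.\ $\alpha_+\geq c\rho_+$. This places the nontrivial-$1$-wave contribution inside the closed supersonic half-plane $\{\alpha\geq c\rho\}$.

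For $\Gamma_+^{sub}$ I parametrize $\mathcal{W}_2$ by $\rho_+$ using Lemma~\ref{Euler1}: $u_+=u_R+c\ln(\rho_+/\rho_R)$ on the rarefaction branch $\rho_+\leq\rho_R$, and $u_+=u_R+c\bigl(\sqrt{\rho_+/\rho_R}-\sqrt{\rho_R/\rho_+}\bigr)$ on the shock branch $\rho_+\geq\rho_R$, with shock speed $\sigma_2=u_R+c\sqrt{\rho_+/\rho_R}$. Direct differentiation of $\alpha_+=\rho_+(u_+-v)$ gives $d\alpha_+/d\rho_+=u_++c-v$ on the rarefaction side, and $d\alpha_+/d\rho_+=(\sigma_2-v)+c(\rho_+-\rho_R)/(2\sqrt{\rho_+\rho_R})$ on the shock side; both quantities are $\geq 0$ on the admissible portion. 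The speed constraint cuts the rarefaction branch at the unique $\rho_{R,ex}$ where $u_++c=v$ (so $\alpha_+=-c\rho_{R,ex}$), while the shock branch is either entirely admissible or truncated where $\sigma_2=v$. Restricting to the subsonic strip $\{-c\rho\leq\alpha\leq c\rho\}$ then realizes $\Gamma_+^{sub}$ as the graph of an increasing $f_+^{sub}:[\rho_{R,ex},+\infty)\to\R$.

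For $\Omega_+^{sup}$, I fix $\rho_+>0$ and define $f_+^{sup}(\rho_+)$ as the infimum of $\alpha_+$ that is admissible. The characterization above together with the wave speed estimates shows the infimum is attained when the $1$-wave speed equals exactly $v$, and that any larger $\alpha_+$ remains admissible (increasing $\alpha_+$ can only accelerate the leftmost wave); thus the admissible slice at fixed $\rho_+$ is an unbounded-above interval, and $\Omega_+^{sup}$ is a strict epigraph contained in $\{\alpha>c\rho\}$. The main obstacle is the monotonicity of $f_+^{sup}$: on the locus where the $1$-wave speed equals $v$, one must show that $\alpha_+$ is an increasing function of $\rho_+$. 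I expect this to follow from an implicit function argument applied to the system ``$(\rho_+,q_+)$ connects to $\mathcal{W}_2$ via a $1$-wave of speed exactly $v$'', using the genuine nonlinearity of the $1$-field and the monotonicity of $\mathcal{W}_2$ already established, mirroring the analogous step for $\Omega_-^{sup}$ in Lemma~\ref{U-}.
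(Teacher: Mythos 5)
Your overall strategy is the one the paper uses (it proves Lemma~\ref{U-} in detail and obtains Lemma~\ref{U+} by symmetry): reduce membership in $V_{+}$ to the requirement that every wave of the Riemann problem from $(\rho_{+},q_{+})$ to $(\rho_{R},q_{R})$ travels at speed at least $v$, compute the admissible portion of the backward $2$-wave curve, then prepend $1$-waves. Your parametrization of $\mathcal{W}_{2}$, the derivatives $d\alpha_{+}/d\rho_{+}$ on both branches, and the two expressions for $\rho_{R,ex}$ all agree with the paper's $f_{+}$. There is, however, a bookkeeping gap in your $\Gamma_{+}^{sub}$: it is \emph{not} the restriction of $\mathcal{W}_{2}$ to the subsonic strip. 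The paper's $f_{+}^{sub}$ coincides with $f_{+}$ only on $[\rho_{R,ex},\rho_{c}]$ and equals $c\rho$ for $\rho\geq\rho_{c}$; that ray consists of states reached by a \emph{nontrivial} $1$-wave whose slowest speed is exactly $u_{+}-c=v$, followed by a $2$-wave. Your step confining nontrivial-$1$-wave states to the closed half-plane $\{\alpha\geq c\rho\}$ is correct, but you must then assign the boundary $\{\alpha=c\rho,\ \rho\geq\rho_{c}\}$ to $\Gamma_{+}^{sub}$ (it cannot sit in $\Omega_{+}^{sup}$, which is a strict epigraph); as written your $\Gamma_{+}^{sub}$ misses this ray entirely.

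The more serious gap is in $\Omega_{+}^{sup}$, which you describe but do not prove. The claim that at fixed $\rho_{+}$ the admissible set of $\alpha_{+}$ is an interval unbounded above because ``increasing $\alpha_{+}$ can only accelerate the leftmost wave'' is not self-evident: moving $\alpha_{+}$ at fixed $\rho_{+}$ also moves the intermediate state $(\rho_{*},q_{*})$ along $\mathcal{W}_{2}$ and can switch the $1$-wave between shock and rarefaction, so the leftmost speed is not an obviously monotone function of $\alpha_{+}$. And the monotonicity of $f_{+}^{sup}$ is exactly the step you defer to ``mirroring Lemma~\ref{U-}'' --- which is the content to be supplied, since the paper proves Lemma~\ref{U+} by that same mirroring. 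The argument the paper actually relies on is different in mechanism from your fixed-$\rho_{+}$ slicing: the backward $1$-wave curve portions emanating from the points $(\rho_{*},f_{+}(\rho_{*}))$ are pieces of Lax curves of a single family, hence foliate the half-plane without crossing; each admissible portion terminates at the $1$-shock of speed exactly $v$, whose left state is $(\alpha_{*}^{2}/(c^{2}\rho_{*}),\alpha_{*})$. Consequently $\Omega_{+}^{sup}$ is the region swept above the image of the subsonic, $\alpha>0$ part of the graph of $f_{+}$ under the reflection $(\rho,\alpha)\mapsto(\alpha^{2}/(c^{2}\rho),\alpha)$, and the monotonicity of $f_{+}^{sup}$ follows from that of $f_{+}$ combined with the order reversal of this reflection about $\rho=\alpha/c$ (a short computation using $2\rho_{*}f_{+}'(\rho_{*})>f_{+}(\rho_{*})$ on the relevant range). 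You need to carry out this foliation-plus-reflection argument, or a genuine implicit-function substitute; the sketch as given does not yet establish that $\Omega_{+}^{sup}$ is exactly the strict epigraph of an increasing function.
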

Those sets are respectively depicted on the left and on the right of Figure~\ref{FAcc}. The subscript $ex$ refers to the extremity of $\Gamma_{-}^{sub}$ and $\Gamma_{+}^{sub}$.
\begin{psfrags}
\psfrag{a}{$\alpha$}
\psfrag{r}{$\rho$}
\psfrag{G-}{$\Gamma_{-}^{sub}$}
\psfrag{O-}{$\Omega_{-}^{sup}$} 
\psfrag{G+}{$\Gamma_{+}^{sub}$}
\psfrag{O+}{$\Omega_{+}^{sup}$} 
\psfrag{rL,qL}{$(\rho_{L},\alpha_{L})$}
\psfrag{rlex}{$\rho_{L,ex}$}
\psfrag{rrex}{$\rho_{R,ex}$}
\psfrag{rR,qR}{$(\rho_{R},\alpha_{R})$}
\psfrag{a=cr}[][][1][26.5]{$\alpha=c \rho$}
\psfrag{a=-cr}[][][1][-23.5]{$\alpha=-c \rho$}
\begin{figure}[H]
\centering
\includegraphics[width=15cm]{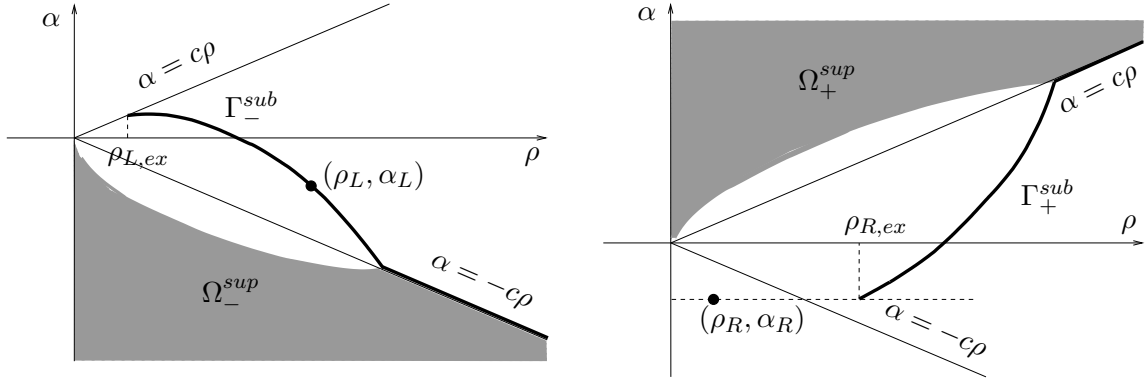}
\caption{ On the left, the set  $\dis V_{-}(\rho_{L},\alpha_{L},v)$ when $-c \rho_{L}< \alpha_{L}< c \rho_{L}$. On the right, the set  $V_{+}(\rho_{R},\alpha_{R},v)$ when $\alpha_{R}< c \rho_{R}$.}
 \label{FAcc}
 \end{figure}
\end{psfrags}

\begin{rem} By definition of $\alpha$ we have
 $$ \{ (\rho, \alpha): -c \rho \leq \alpha  \leq c \rho  \} = \{ (\rho, u) -c \leq u-v \leq c \}, $$
 so $\Gamma_{-}^{sub}$ and $\Gamma_{+}^{sub}$ contain only subsonic states (in the framework of the particle), while $\Omega_{-}^{sup}$ and $\Omega_{+}^{sup}$ contain only supersonic states (in the framework of the particle).
\end{rem}

\begin{proof}[Proof (Lemma~\ref{U-})] We fix the particle velocity $v$ and a left state $(\rho_{L},\alpha_{L})$. The element of $(\rho, \alpha)$ of the set $V_{-}(\rho_{L},\alpha_{L},v)$ is the value on the line $x=vt$ of a Riemann problem with $(\rho_{L}, \alpha_{L})$ on its left. Thus it is linked with $(\rho_{L}, \alpha_{L})$ by a succession of two waves, both traveling slower than $v$. Let us first exhibit all the states $(\rho_{*}, q_{*})$ that can be linked to $(\rho_{L},q_{L})$ with a $1$-wave traveling at speed smaller than $v$. According to Lemma~\eqref{Euler1}, with a $1$-rarefaction wave we can reach all the states $(\rho, q)$ in the set
$$ \dis \left\{ \left(  \rho_L e^{-\frac{(s-(u_{L}-c))}{c}} , \left[ q_L + \rho_L (s-(u_L-c)) \right] e^{-\frac{(s-(u_{L}-c))}{c}} \right) ,\ u_{L}-c \leq s \leq v  \right\} . $$
This set is empty if $u_{L}-c>v$. Parametrized by $\rho$ it rewrites
$$  \dis \left\{ \left(  \rho , \left[ u_{L}-c \ln \left(   \frac{\rho}{\rho_{L}} \right)  \right] \rho \right) , \rho_{L} e^{- \frac{v-(u_{L}-c)}{c}} \leq \rho \leq \rho_{L} \right\} . $$
Let us denote in this case $\dis \rho_{L,ex}= \rho_{L} e^{- \frac{v-(u_{L}-c)}{c}}$. The states $(\rho_{*},q_{*})$ accessible through a $1$-shock traveling slower than $v$ satisfy
$$
\begin{cases}
  \frac{q_{L}}{\rho_{L}}-c \sqrt{\frac{\rho_{*}}{\rho_{L}}} < v  \ \ \ \textrm{and} \ \ \  \rho_{*}>\rho_{L}, \\
  q_{*}=q_{L}+ \left(   \frac{q_{L}}{\rho_{L}}-c \sqrt{\frac{\rho_{*}}{\rho_{L}}} \right) (\rho_{*}-\rho_{L}).
\end{cases}
$$
We easily pass to the $(\rho, \alpha)$ variable: with a $1$-wave traveling slower than $v$, we can reach all the states $(\rho, \alpha)$ with $\alpha= f_{-}(\rho)$, where
$$
f_{-}(\rho)= 
\begin{cases}
\left[ \frac{\alpha_{L}}{\rho_{L}}-c \ln \left( \frac{\rho}{\rho_{L}} \right) \right] \rho & \textrm{ if } \rho_{L,ex} \leq \rho \leq \rho_{L}, \\
\alpha_{L} + \left( \frac{\alpha_{L}}{\rho_{L}}-c \sqrt{\frac{\rho}{\rho_{L}}}\right)(\rho-\rho_{L})  & \textrm{ if } \max( \rho_{L,ex},\rho_{L})< \rho,
\end{cases}
$$
and
$$
\rho_{L,ex}= 
\begin{cases}
 \rho_{L} e^{- \frac{v-s_{L}}{c}} & \textrm{ if } s_{L}=u_{L}-c \leq v, \\ 
 \left( \frac{u_{L}-v}{c} \right)^{2} \rho_{L} & \textrm{ if } u_{L}-c > v.
\end{cases}
$$
 If $u_{L}-v<c$,  this graph regroups all the $1$-shocks and the $1$-rarefaction waves leading to a density higher than $\rho_{L,ex}=\rho_{L} e^{- \frac{v-(u_{L}-c)}{c}}$, while if
$u_{L}-v \geq c$, this graphs contains only the $1$-shocks leading to a density higher than $\dis \rho_{L,ex}= \rho_{L}  \left( \frac{u_{L}-v}{c} \right)^{2} = \frac{\alpha_{L}}{c^{2} \rho_{L}}$ . We check that $f_{-}$ is concave and decreasing. Moreover, if  $u_{L}-c \leq v$, $f_{-}(\rho_{L,ex})=c \rho_{L,ex}$ and $f_{-}'(\rho_{L,ex})=0$, while if $u_{L}-c > v$, $f_{-}(\rho_{L,ex})= \alpha_{L}$. In particular, $f_{-}(\rho) <c \rho$ for any $\rho>\rho_{L,ex}$ as shown on Figure~\eqref{A1W}.
\begin{psfrags} 
 \psfrag{a}{$\alpha$}
 \psfrag{r}{$\rho$}
 \psfrag{rlex}{$\rho_{L,ex}$}
  \psfrag{r,q}{ }
 \psfrag{a=cr}[][][1][26.5]{$\alpha=c \rho$}
 \psfrag{a=-cr}[][][1][-23.5]{$\alpha=-c \rho$}
 \begin{figure}[H]
 \centering
 \includegraphics[width=13cm]{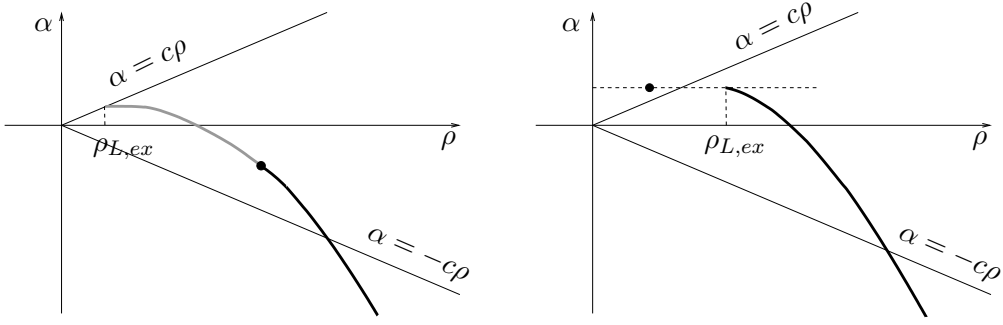}
 \caption{Accessible states via a $1$-wave starting from $(\rho_{L},q_{L})$(black dot) in the $(\rho, \alpha)$-plane. Left, the case $u_{L}-v\leq c$ and right, the case $u_{L}-v > c$. In gray are the $1$-rarefaction waves and in black are the $1$-shocks.  }
 \label{A1W}
 \end{figure}
\end{psfrags}
Let us now stop at any state $(\rho_{*},q_{*})$ belonging to the graph of $f_{-}$, and continue with a $2$-wave traveling at speed smaller than $v$.
The set of all the states $(\rho_{-},q_{-})$ that can be joined from $(\rho_{*},q_{*})$ with a $2$-wave traveling at speed smaller than $v$ is
$$ \dis \left\{ \left(  \rho_* e^{\frac{(s-(u_{*}+c))}{c}} , \left[ q_* + \rho_L (s-s_*) \right] e^{\frac{(s-(u_{*}+c))}{c}} \right) ,\ u_{*}+c \leq s \leq v  \right\}. $$
This set is empty if $u_{*}+c>v$, and can be parametrized by $\rho$ by
$$  \dis \left\{ \left(  \rho , \left[ u_{*}+c \ln \left(   \frac{\rho}{\rho_{*}} \right)  \right] \rho \right) , \rho_{*}  \leq \rho \leq \rho_{*} e^{ \frac{v-(u_{*}+c)}{c}} \right\} . $$
With a $2$-shock slower than $v$, we can reach all the states $(\rho_{-},q_{-})$ such that
$$
\begin{cases}
  \frac{q_{*}}{\rho_{*}}+c \sqrt{\frac{\rho_{-}}{\rho_{*}}} < v  \ \ \ \textrm{and} \ \ \  \rho_{-}<\rho_{*}, \\
  q_{-}=q_{*}+ \left(   \frac{q_{*}}{\rho_{*}}+c \sqrt{\frac{\rho_{-}}{\rho_{*}}} \right) (\rho_{-}-\rho_{*}).
\end{cases}
$$
Therefore the $2$-waves traveling at speed smaller than $v$, starting from $(\rho_{*},q_{*})$, are:
\begin{itemize}
 \item If $u_{*}-v \leq -c$, all the $2$-shocks and the $2$-rarefaction waves leading to a density smaller than $\rho_{*} e^{ \frac{v-(u_{*}+c)}{c}}$ ;
 \item If $-c < u_{*}-v <0$, only the $2$-shock leading to a density smaller than 
 $$\dis \rho_{*,ex}= \rho_{*} \left( \frac{v-u_{*}}{c} \right)^{2} = \frac{\alpha_{*}^{2}}{c^{2} \rho_{*}};$$
  \item If $u_{*}-v \geq 0$, there are no such $2$-waves.
\end{itemize}
The Figure~\eqref{AW2} resumes the first two cases.
 \begin{psfrags}
 \psfrag{a}{$\alpha$}
 \psfrag{r}{$\rho$}
 \psfrag{r*}{$\rho_{*}$}
 \psfrag{r*ex}{$\rho_{*,ex}$}
 \psfrag{r*ex=r*}{$\rho_{*,ex}=\rho_{*}$}
 \psfrag{r,q}{$(\rho_{L},\alpha_{L})$}
 \psfrag{r*,q*}{$(\rho_{*},\alpha_{*})$}
 \psfrag{a=cr}[][][1][26.5]{$\alpha=c \rho$}
 \psfrag{a=-cr}[][][1][-25.5]{$\alpha=-c \rho$}
 \begin{figure}[H]
 \centering
 \includegraphics[width=13cm]{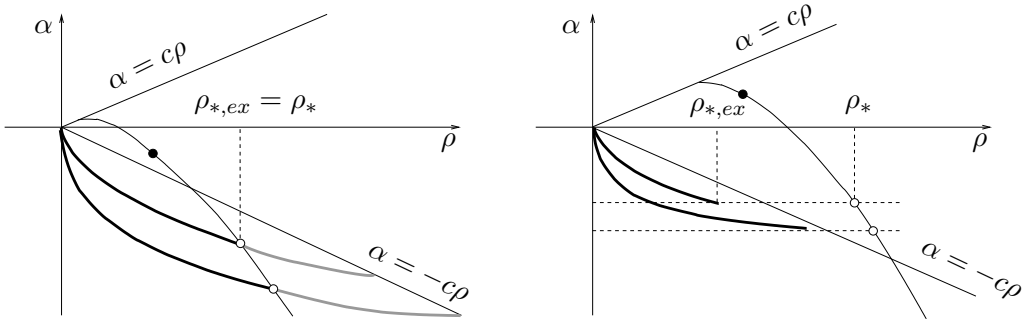}
 \caption{Accessible states from $(\rho_{L},q_{L})$ (black dot) via a $1$-wave stopped in $(\rho_{*},q_{*})$ (white dots) followed by a $2$-wave in the $(\rho, \alpha)$-plane. Left, the case where $u_{*}-v\leq -c$ and right, the case $-c <u_{L*}-v <0$. In gray are the $2$-rarefaction waves and in black are the $2$-shocks.  }
 \label{AW2}
 \end{figure}
\end{psfrags}
We now prove that, as emphasized in Figure~\ref{AW2}, all the states reached from $(\rho_{*},q_{*})$ through  a $2$-wave traveling at speed smaller than $v$ are such that $\alpha \leq -c \rho$. This is easy to check for the $2$-rarefaction waves and for the $2$-shock when $u_{*}-v \leq -c$.  When $-c <u_{*}-v < 0$, the reachable densities are smaller than $\frac{\alpha_{*}^{2}}{c^{2} \rho_{*}}< \rho_{*}$. We want to prove that
$$\alpha=\rho \left( \frac{\alpha_{*}}{\rho_{*}}+ c \sqrt{\frac{\rho}{\rho_{*}}}- c \sqrt{\frac{\rho_{*}}{\rho}} \right) \leq -c \rho. $$
This is true because the function $\rho \mapsto \frac{\alpha_{*}}{\rho_{*}}+ c \sqrt{\frac{\rho}{\rho_{*}}}- c \sqrt{\frac{\rho_{*}}{\rho}}$ is increasing, and is equal to $\frac{-c^{2} \rho_{*}}{|\alpha_{*}|}$, which is smaller than $-c$.

To conclude, the set of the states reached from $(\rho_{L},\alpha_{L})$ through a $1$-wave slower than the particle is the graph of $f_{-}$. The set of the stated reached from $(\rho_{L},\alpha_{L})$ through a $1$-wave followed by a $2$-wave, both of them traveling at speed smaller than $v$, is a family of curves, entirely included in $\{(\rho,\alpha), \alpha \leq -c \rho \}$. Those curves are portions of Lax curves, which fill the half plane $\R_{+}^{*} \times \R$ and do not cross each other. For a fixes $\alpha<0$ such that $f_{-}(\rho) \geq -c \rho$, it is not possible to reach densities higher than the density obtain thanks to a shock at speed $v$, which is $\frac{f_{-}(\rho)^{2}}{c^{2} \rho}$.
We can separate those states in two categories, as depicted on the left of Figure~\ref{FAcc}:
\begin{itemize}
 \item the subsonic ones (i.e the ones such that $-c \rho \leq \alpha \leq c \rho$), which constitute the graph of the function 
$$ f_{-}^{sub}(\rho)=
\begin{cases}
 f_{-}(\rho) & \textrm{ if } \rho_{L,ex} \rho \leq \rho_{-c} \\
 -c \rho & \textrm{ if } \rho_{-c} \leq \rho  
\end{cases}
$$
where $\rho_{-c}$ is the only density such that $f_{-}(\rho_{-c})=-c \rho$. We regroup those states in $\Gamma_{-}^{sub}$;
\item the supersonic ones, and more precisely the states such that $\alpha<-c \rho$, which formed the hypograph of the function
$$ f_{-}^{sup}(\rho)=
\begin{cases}
 f_{-} \left( \frac{f_{-}(\rho)^{2}}{c^{2} \rho}\right) & \textrm{ if } 0<\rho<\rho_{-c} \\
 -c \rho &  \textrm{ if } \rho_{-c} \leq \rho.
\end{cases}
$$
We regroup those states in $\Omega_{-}^{sup}$. \qedhere
\end{itemize}
\end{proof}

\begin{proof}[Proof (Lemma~\ref{U+})]
 We do not prove this lemma which is exactly similar to Lemma~\ref{U-}. The curve of the states accessible by a $2$-wave traveling faster than $v$ and ending in $(\rho_{R},q_{R})$, can be parametrized by
 $$
f_{+}(\rho)= 
\begin{cases}
\left[ \frac{\alpha_{R}}{\rho_{R}}+c \ln \left( \frac{\rho}{\rho_{R}} \right) \right] \rho & \textrm{ if } \rho_{R,ex} \leq \rho \leq \rho_{R}, \\
\alpha_{R} + \left( \frac{\alpha_{R}}{\rho_{R}}+c \sqrt{\frac{\rho}{\rho_{R}}}\right)(\rho-\rho_{R})  & \textrm{ if } \max( \rho_{R,ex},\rho_{R})< \rho,
\end{cases}
$$
where
$$
\rho_{R,ex}= 
\begin{cases}
 \rho_{R} e^{ \frac{v-(u_{R}+c)}{c}} & \textrm{ if } u_{R}+c \geq v, \\ 
 \left( \frac{v-u_{R}}{c} \right)^{2} \rho_{R} & \textrm{ if } u_{R}+c < v.
\end{cases}
$$
If $u_{R}-v \geq -c$ it is possible to follow all the $2$-shocks and some $2$-rarefaction waves. In this case $f_{+}(\rho_{R,ex})$ belongs to the line $\alpha=-c \rho$ and $f_{+}'(\rho_{R,ex})=0$. On the contrary if $u_{R}-v < -c$, we can only follow $2$-shocks, and the state $(\rho_{R,ex},\alpha_{R,ex})$ with the highest density we can reach verifies
$$ \alpha_{R,ex}= \alpha_{R} \ \ \ \textrm{ and } \ \ \ \rho_{R,ex}= \frac{\alpha_{R}^{2}}{c^{2} \rho_{R}}. $$
We easily check that that $f_{+}$ is convex, increasing and crosses the line $\alpha=c \rho$ for a unique density that we denote by $\rho_{c}$.

After similar computations for the $1$-wave ending on a state $(\rho_{*},q_{*}=f_{+}(\rho_{*}))$ we obtain
$$ f_{+}^{sub}(\rho)=
\begin{cases}
 f_{+}(\rho) & \textrm{ if } \rho_{R,ex}  \leq \rho \leq \rho_{c} ,\\
 c \rho & \textrm{ if } \rho_{c} \leq \rho,  
\end{cases}
$$
and
$$
f_{+}^{sup}=
\begin{cases}
 f_{+} \left( \frac{f_{+}(\rho)^{2}}{c^{2} \rho}\right) & \textrm{ if } 0<\rho<\rho_{c}, \\
 c \rho &  \textrm{ if } \rho_{c} \leq \rho. 
\end{cases} 
$$\qedhere
\end{proof}

\subsection{Resolution of the Riemann problem in the subsonic case} \label{pbR sub}
We are now in position to solve the Riemann problem~\eqref{pbR} for a particle moving at speed $v$. In the sequel, we denote by $(\rho_{+},q_{+})$ the trace on the left of the particle, i.e. on the line $x=(vt)_{-}$ and by $(\rho_{-},q_{-})$ the trace on its right, i.e. on the line $x=(vt)_{+}$. In this section, we treat a special case implying that $(\rho_{L},\alpha_{L})$ does not belong to $\Omega_{+}^{sup}$ and $(\rho_{R},\alpha_{R})$ does not belong to $\Omega_{-}^{sup}$. A consequence is that the left trace belongs to $\Gamma_{-}^{sub}$ and the right trace belong to $\Gamma_{+}^{sub}$.
\begin{lemma} \label{L:sub->sub}
 If $(\rho_{L},q_{L})$ and $(\rho_{R},q_{R})$ verify
  $$ u_{L}-v \leq c \ \ \ \textrm{ and } \ \ \ -c \leq u_{R}-v  $$
 then $(\rho_{-},q_{-})$ and $(\rho_{+},q_{+})$ are necessary subsonic, i.e,
  $$ -c \leq u_{-}-v \leq c \ \ \ \textrm{ and } \ \ \ -c \leq u_{+}-v \leq c. $$
\end{lemma}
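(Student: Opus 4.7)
The plan is to argue by contradiction: assume that one of the traces is supersonic in the particle frame and derive a contradiction by transporting supersonicity across the particle via the germ condition. Recall that $\mathcal{G}_D(v)$ forces $\alpha_- = \alpha_+ =: \alpha$, so the adjectives subsonic and supersonic refer only to the size of $\rho_{\pm}$ relative to $|\alpha|/c$.

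Suppose first that the left trace is supersonic, i.e.\ $c\rho_- < |\alpha|$. By Lemma~\ref{U-}, $(\rho_-, \alpha)$ belongs to $\{(\rho_L, \alpha_L)\} \cup \Gamma_-^{sub} \cup \Omega_-^{sup}$. The curve $\Gamma_-^{sub}$ is entirely subsonic and is therefore excluded, while $\Omega_-^{sup}$ is contained in $\{\alpha < -c\rho\}$; moreover the hypothesis $\alpha_L \leq c\rho_L$ forces a supersonic match to $(\rho_L, \alpha_L)$ to satisfy $\alpha_L < -c\rho_L$. In every admissible case one thus has $\alpha < 0$ together with $\alpha < -c\rho_-$.

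Now I invoke the contrapositive of the third bullet of Corollary~\ref{sub->sub}: since $\alpha < 0$ and $c\rho_- < |\alpha|$, one must also have $c\rho_+ < |\alpha|$, equivalently $\alpha < -c\rho_+$. Applying Lemma~\ref{U+} to $(\rho_+, \alpha) \in V_+(\rho_R, \alpha_R, v)$, neither $\Gamma_+^{sub} \subset \{\alpha \geq -c\rho\}$ nor $\Omega_+^{sup} \subset \{\alpha > c\rho\}$ can contain a state with $\alpha < -c\rho_+ < 0$, so $(\rho_+, \alpha) = (\rho_R, \alpha_R)$. This yields $\alpha_R = \alpha < -c\rho_R$, contradicting the hypothesis $u_R - v \geq -c$.

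The case where the right trace is supersonic is treated symmetrically: the hypothesis $u_R - v \geq -c$ now forces the admissible sign to be $\alpha > 0$ with $\alpha > c\rho_+$; the contrapositive of the second bullet of Corollary~\ref{sub->sub} gives $\alpha > c\rho_-$; and Lemma~\ref{U-} then forces $(\rho_-, \alpha) = (\rho_L, \alpha_L)$, contradicting $u_L - v \leq c$. The only mildly delicate point is the case analysis on the sign of $\alpha$: the two hypotheses each eliminate exactly one of the two possible signs of a supersonic trace, which is what makes the propagation via Corollary~\ref{sub->sub} unambiguous and rules out any resonance obstruction.
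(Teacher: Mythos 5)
Your proof is correct and rests on exactly the same ingredients as the paper's: the structure of the accessible sets from Lemmas~\ref{U-} and~\ref{U+} on each side of the particle, and Corollary~\ref{sub->sub} to transport the sonic character across it. You merely run the argument in contrapositive form (assuming a supersonic trace and deriving a contradiction) where the paper argues directly by cases on the sign of $\alpha$; the logical content is identical.
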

\begin{proof}
This is a straightforward consequence of the form of $V_{-}(\rho_{L},\alpha_{L},v)$ and $V_{+}(\rho_{R},\alpha_{R},v)$ exhibited in Lemmas~\ref{U-} and~\ref{U+} and resumed on Figure~\ref{FAcc}. Definition~\ref{defgermgen} implies that $q_{-}-v \rho_{-}= q_{+}-v \rho_{+}$. We denote by $\alpha$ this quantity. If $\alpha=0$ there is nothing to prove. Suppose $\alpha>0$. Lemma~\ref{U-} shows that $V_{-}(\rho_{L},\alpha_{L},v)$ contains only states such that $u-v\leq c$. As a consequence, we necessary have that $c \rho_{-}\geq \alpha$. The velocity at the entry of the particle is subsonic, thus it is also subsonic at its exit:  $\rho_{+}\geq \frac{\alpha}{c}$ (see Corollary~\ref{sub->sub}).
  If $\alpha<0$, Lemma~\ref{U-} shows that $V_{+}(\rho_{R},\alpha_{R},v)$ contains only states verifying $u-v \geq -c$. Using Corollary~\ref{sub->sub} again, we obtain that both $\rho_{+}$ and $\rho_{-}$ are larger than $\frac{|\alpha|}{c}$.
\end{proof}
We are now looking for $(\rho_{-},\alpha_{-})$ in $\Gamma_{-}^{sub}$ and $(\rho_{+},\alpha_{+})$ in $\Gamma_{+}^{sub}$. The quantity $\alpha=q-v\rho$ being conserved through the particle, it is more convenient to parametrize the accessible sets $\Gamma_{-}^{sub}$ and $\Gamma_{+}^{sub}$  by $\alpha$ rather than by $\rho$. For this purpose, we introduce $g_{-}^{sub}$ and $g_{+}^{sub}$, the inverses of $f_{-}^{sub}$ and $f_{+}^{sub}$. They exist because these two functions are strictly monotone. We now have
$$ \Gamma_{-}^{sub}=\{ (\rho, \alpha), \rho=g_{-}^{sub}(\alpha), \ \alpha \leq f_{-}^{sub}(\rho_{L,ex}) \} $$ 
and
$$ \Gamma_{+}^{sub}=\{ (\rho, \alpha), \rho=g_{+}^{sub}(\alpha), \ \alpha \geq f_{+}^{sub}(\rho_{R,ex}) \} .$$
\begin{lemma} \label{triangle}
If $D$ has the same sign as $\alpha$ an is a nondecreasing function of $\alpha$, the function
  $$ \Delta(\alpha)= F_{\alpha}(g_{-}^{sub}(\alpha))- F_{\alpha}(g_{+}^{sub}(\alpha))$$
is strictly decreasing on any interval included in $(0, + \infty)$ where $g_{-}^{sub} \geq g_{+}^{sub}$ and on any interval included in $(-\infty, 0)$ where $g_{-}^{sub} \leq g_{+}^{sub}$;
\end{lemma}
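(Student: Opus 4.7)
The plan is to rewrite $\Delta(\alpha)$ as a single integral and then differentiate under the integral sign. Since $F_\alpha$ has a common lower limit $|\alpha|/c$, the difference telescopes to
$$\Delta(\alpha) = \int_{g_+^{sub}(\alpha)}^{g_-^{sub}(\alpha)} K(\alpha, r)\,dr, \qquad K(\alpha, r) := \frac{c^2 - \alpha^2/r^2}{|D(r,\alpha)|},$$
where $K$ is the derivative $F_\alpha'(r)$ read off from the proof of Lemma~\ref{lemmagerm}. Leibniz's rule then produces three contributions: two boundary terms $K(\alpha, g_\pm^{sub}(\alpha))(g_\pm^{sub})'(\alpha)$ and one integral of $\partial_\alpha K(\alpha, r)$ over the current integration range. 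I would analyse the sign of each.

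I would first treat $\alpha > 0$ on an interval where $g_-^{sub} \geq g_+^{sub}$. Lemmas~\ref{U-} and~\ref{U+} provide exactly what is needed: $f_-^{sub}$ is strictly decreasing and $f_+^{sub}$ strictly increasing, hence their inverses have opposite monotonicities $(g_-^{sub})' \leq 0$ and $(g_+^{sub})' \geq 0$, and both remain $\geq \alpha/c$, so that $K(\alpha, g_\pm^{sub}(\alpha)) \geq 0$. Consequently both boundary terms are nonpositive. For the integral term I compute
$$\partial_\alpha K(\alpha, r) = -\frac{2\alpha/r^2}{|D(r,\alpha)|} - \frac{(c^2 - \alpha^2/r^2)\,\partial_\alpha |D(r,\alpha)|}{|D(r,\alpha)|^2}.$$
For $\alpha > 0$, the hypothesis that $D$ is increasing in $\alpha$ gives $\partial_\alpha |D| = \partial_\alpha D \geq 0$, while $c^2 - \alpha^2/r^2 \geq 0$ on the subsonic range. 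Both summands are nonpositive, the first strictly so, hence $\partial_\alpha K(\alpha, r) < 0$ on $r > \alpha/c$; the integral term is then $\leq 0$, and strictly negative as soon as $g_-^{sub}(\alpha) > g_+^{sub}(\alpha)$.

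The case $\alpha < 0$ with $g_-^{sub} \leq g_+^{sub}$ is treated symmetrically: reversing the integration bounds and tracking the sign changes $-2\alpha/r^2 > 0$ and $\partial_\alpha|D| = -\partial_\alpha D \leq 0$ leads to the same conclusion $\Delta'(\alpha) \leq 0$, with strict inequality of the integral term when $g_-^{sub}(\alpha) < g_+^{sub}(\alpha)$. Finally, to upgrade $\Delta' \leq 0$ to strict monotonicity I observe that $g_-^{sub} - g_+^{sub}$ is strictly decreasing in $\alpha$ (since $g_-^{sub}$ is strictly decreasing and $g_+^{sub}$ strictly increasing), so on any interval where the hypothesis holds, the equality $g_-^{sub}(\alpha) = g_+^{sub}(\alpha)$ can occur at most at one endpoint. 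Thus $\Delta'(\alpha) < 0$ on a subset of full measure in every subinterval, and $\Delta$ is strictly decreasing. The main bookkeeping challenge is the case $\alpha < 0$, where several sign flips ($\alpha$, $|D|$ versus $D$, and the orientation of the integration bounds) occur simultaneously, but they combine consistently to preserve the conclusion.
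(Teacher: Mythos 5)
Your proof is correct and follows essentially the same route as the paper: you split $\Delta'(\alpha)$ into the two boundary terms, controlled by the opposite monotonicities of $g_{\pm}^{sub}$ and the nonnegativity of $F_{\alpha}'=K$ on the subsonic range, plus the integral of $\partial_{\alpha}K$, whose sign is determined exactly as in the paper from $\sgn D=\sgn\alpha$ and the monotonicity of $D$ in $\alpha$; writing $\Delta$ as a single integral and invoking Leibniz's rule is only a cosmetic repackaging of the paper's difference of two $\partial_{\alpha}F_{\alpha}$ terms. Your closing remark upgrading $\Delta'\leq 0$ to strict monotonicity (the coincidence set of $g_{-}^{sub}$ and $g_{+}^{sub}$ is at most a point) is a small but welcome refinement that the paper leaves implicit.
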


\begin{proof}  We compute the derivative of $\Delta$:
 $$ 
\begin{aligned}
 \left[ F_{\alpha} (g_{-}^{sub}(\alpha)) - F_{\alpha}(g_{+}^{sub}(\alpha))  \right]' 
 	= & F_{ \alpha}'(g_{-}^{sub}(\alpha))(g_{-}^{sub})'(\alpha) + \left[ \frac{\partial}{\partial \alpha}F_{\alpha}  \right] (g_{-}^{sub}(\alpha))  \\
	& \ \ \ - F_{ \alpha}'(g_{+}^{sub}(\alpha))(g_{+}^{sub})'(\alpha)  - \left[ \frac{\partial}{\partial \alpha}F_{\alpha}  \right] (g_{+}^{sub}(\alpha)) 
\end{aligned}
 $$
On the one hand, $\alpha \mapsto g_{-}^{sub}(\alpha)$ decreases,  $\alpha \mapsto g_{+}^{sub}(\alpha)$ increases. We recall that for all $\alpha$ in there interval of definition, $c g_{\pm}^{sub}(\alpha) \geq |\alpha| $. If  $ c \rho$ is larger than $|\alpha|$, 
 $$ 
 F_{ \alpha}'(\rho)= \frac{1}{|D(\rho, \alpha)|} \left( c^{2}- \frac{\alpha^{2}}{\rho^{2}}\right) \geq 0 .
 $$
 and it follows that
  $$  F_{ \alpha}'(g_{-}^{sub}(\alpha))(g_{-}^{sub})'(\alpha) - F_{ \alpha}'(g_{+}^{sub}(\alpha))(g_{+}^{sub})'(\alpha) \leq 0.
 $$
On the other hand, by definition of $F_{\alpha}$, we have
 $$ 
 \begin{aligned}
 \left[ \frac{\partial}{\partial \alpha}F_{\alpha}  \right] (\rho_{2}) -  \left[ \frac{\partial}{\partial \alpha}F_{\alpha}  \right] (\rho_{2}) 
 &= \int_{ \rho_{1}}^{\rho_{2}}  \frac{\partial}{\partial \alpha} \left[ \frac{1}{|D(r,\alpha)|} \left( c^{2}- \frac{\alpha^{2}}{r^{2}}\right) \right] dr \\
 &= \int_{ \rho_{1}}^{\rho_{2}} \left[ \frac{- \sign(D(r, \alpha)) \partial_{\alpha} D (r, \alpha)}{|D(r,\alpha)|^{2}} \left( c^{2}- \frac{\alpha^{2}}{r^{2}}\right) - \frac{2 \alpha}{r^{2} |D(r,\alpha)|} \right] dr. \\
 \end{aligned}
$$
If $c \rho_{1}$ and $c \rho_{2}$ are both larger than $\alpha$, the term  $c^{2}- \frac{\alpha^{2}}{r^{2}}$ is nonnegative.
Moreover $D(r, \alpha)$ and $\alpha$ have the same sign and $D$ is an non-decreasing function of $\alpha$. Thus the integral has the opposite sign as $ \alpha(\rho_{2}-\rho_{1})$. In particular, on any interval where $\alpha$ is positive and $g_{-}^{sub} \geq g_{+}^{sub}$ and on any interval where $\alpha$ is negative and $g_{-}^{sub} \leq g_{+}^{sub}$,
 $$   \left[ \frac{\partial}{\partial \alpha}F_{\alpha}  \right] (g_{-}^{sub}(\alpha))	- \left[ \frac{\partial}{\partial \alpha}F_{\alpha}  \right] (g_{+}^{sub}(\alpha)) \leq 0, $$
 which proves the lemma. 
\end{proof}
We are now in position to state the result in the subsonic case.
\begin{prop} \label{RSubSub}
Let $v \in \R$, $(\rho_{L},\alpha_{L}) \in \R_{+}^{*} \times \R$ and $(\rho_{R},\alpha_{R}) \in \R_{+}^{*} \times \R$ such that
 $$ -c \leq u_{L}-v \leq c \ \ \ \textrm{ and } \ \ \ -c \leq u_{R}-v \leq c. $$
 Then the set
 $$ \dis \mathcal{G}_{D}(v) \cap  \left( V_{-}(\rho_{L},\alpha_{L},v) \times V_{+}(\rho_{R},\alpha_{R},v) \right)$$
 is reduced to a unique element. Moreover, both traces belong to the subsonic triangle
 $$ \{ (\rho, \alpha), -c \rho \leq \alpha \leq c \rho \}= \{  (\rho,u), -c \leq u-v \leq c \}. $$
 In other words, the Riemann problem~\eqref{pbR} admits a unique solution, which is entirely subsonic.
\end{prop}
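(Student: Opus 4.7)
The plan is to reduce the problem to a scalar equation in the conserved quantity $\alpha := q_\pm - v \rho_\pm$. By Lemma~\ref{L:sub->sub}, both traces are forced to be subsonic and therefore lie respectively on the curves $\Gamma_-^{sub}$ and $\Gamma_+^{sub}$ described in Lemmas~\ref{U-} and~\ref{U+}. Parametrizing these curves by $\alpha$ via the inverses $g_-^{sub}$ (strictly decreasing, defined on $(-\infty, c\rho_{L,ex}]$) and $g_+^{sub}$ (strictly increasing, defined on $[-c\rho_{R,ex}, +\infty)$), any admissible pair of traces must be of the form $(\rho_-, \rho_+) = (g_-^{sub}(\alpha), g_+^{sub}(\alpha))$ for a single $\alpha$ in the common domain $I := [-c\rho_{R,ex}, c\rho_{L,ex}]$.

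I would then translate the germ condition of Definition~\ref{defgermgen}, restricted to its subsonic branch, into a scalar equation. Setting $\Delta(\alpha) := F_\alpha(g_-^{sub}(\alpha)) - F_\alpha(g_+^{sub}(\alpha))$, membership of the candidate pair in $\mathcal{G}_D(v)$ is equivalent to $\Delta(\alpha) = 1$ if $\alpha > 0$, $\Delta(\alpha) = -1$ if $\alpha < 0$, or $g_-^{sub}(0) = g_+^{sub}(0)$ if $\alpha = 0$. Since $F_\alpha$ is strictly increasing on $(|\alpha|/c, +\infty)$ and the subsonic condition forces $g_\pm^{sub}(\alpha) \geq |\alpha|/c$, the sign of $\Delta(\alpha)$ coincides with the sign of $g_-^{sub}(\alpha) - g_+^{sub}(\alpha)$. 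The monotonicities of $g_\pm^{sub}$ imply that $g_-^{sub} - g_+^{sub}$ is strictly decreasing on $I$, and the endpoint inequalities $g_-^{sub}(-c\rho_{R,ex}) \geq \rho_{R,ex} = g_+^{sub}(-c\rho_{R,ex})$ and $g_-^{sub}(c\rho_{L,ex}) = \rho_{L,ex} \leq g_+^{sub}(c\rho_{L,ex})$ (both following from the subsonic bound $g_\pm^{sub} \geq |\alpha|/c$) yield a unique $\alpha^* \in I$ with $g_-^{sub}(\alpha^*) = g_+^{sub}(\alpha^*)$, hence $\Delta(\alpha^*) = 0$.

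The final step is a case analysis according to the sign of $\alpha^*$. If $\alpha^* > 0$, the only candidates for a solution lie in $(0, \alpha^*)$ (where $\Delta > 0$); on this subinterval $g_-^{sub} \geq g_+^{sub}$, so Lemma~\ref{triangle} gives that $\Delta$ is strictly decreasing; and as $\alpha \to 0^+$ the hypothesis $D(\cdot, 0) = 0$ combined with $D \in \mathcal{C}^1$ gives $|D(r, \alpha)| = O(\alpha)$ on a neighborhood of $g_\pm^{sub}(0)$, so that the positive integrand defining $F_\alpha$ blows up like $1/\alpha$ on an integration interval of length bounded below by $g_-^{sub}(0) - g_+^{sub}(0) > 0$; therefore $\Delta(\alpha) \to +\infty$. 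Since $\Delta(\alpha^*) = 0 < 1$, the intermediate value theorem produces a unique $\alpha \in (0, \alpha^*)$ with $\Delta(\alpha) = 1$. The case $\alpha^* < 0$ is symmetric and yields a unique negative $\alpha$ with $\Delta(\alpha) = -1$, while $\alpha^* = 0$ gives directly the unique solution $\alpha = 0$, $\rho_- = \rho_+$.

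The main obstacle is the limit $\alpha \to 0$: $F_\alpha$ blows up pointwise when the drag force vanishes, and extracting the quantitative blow-up of $\Delta$ above requires exploiting the $\mathcal{C}^1$ regularity of $D$ together with $D(\cdot, 0) = 0$. Once this blow-up is in place, the rest is a careful bookkeeping of monotonicities and endpoint values, with uniqueness furnished by Lemma~\ref{triangle} and existence by the intermediate value theorem.
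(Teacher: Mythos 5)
Your proposal is correct and follows essentially the same route as the paper's proof: reduction to the subsonic curves via Lemma~\ref{L:sub->sub}, parametrization by $\alpha$ through $g_{\pm}^{sub}$, location of the unique crossing point $\alpha^{*}$ (the paper's $\alpha_{0}$) by monotonicity and the endpoint values on the sonic lines, and then existence and uniqueness of the solution of $\Delta(\alpha)=\pm 1$ on the interval between $0$ and $\alpha^{*}$ via Lemma~\ref{triangle} and the blow-up of $\Delta$ at $\alpha=0$. Your justification of that blow-up (using $D\in\mathcal{C}^{1}$, $D(\cdot,0)=0$ and the positive length of the limiting integration interval) is in fact slightly more explicit than the paper's.
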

\begin{proof}
\emph{Step $1$: Properties of the traces.}

Suppose that
 $$\left( (\rho_{-},\alpha_{-}), (\rho_{+},\alpha_{+}) \right) \in \mathcal{G}_{D}(v) \cap  \left(V_{-}(\rho_{L},\alpha_{L},v) \times V_{+}(\rho_{R},\alpha_{R},v) \right) $$  then $\alpha_{-}=\alpha_{+}$. We denote by $\alpha$ this quantity. In Lemma~\ref{L:sub->sub} we proved that
 $$ -c \leq u_{-}-v \leq c \ \ \ \textrm{ and } \ \ \ -c \leq u_{+}-v \leq c. $$
Thus we can use Corollary~\ref{sub->sub} to obtain that
 \begin{itemize}
 \item If $\alpha=0$ then $\rho_{-}=\rho_{+}$ and $q_{-}=q_{+}$;
 \item If $\alpha>0$, then $\rho_{-}>\rho_{+}$;
 \item If $\alpha<0$, then $\rho_{-}<\rho_{+}$.
\end{itemize}

\emph{Step $2$: It exists a unique $\alpha_{0}$ such that $g_{-}^{sub}(\alpha_{0})=g_{+}^{sub}(\alpha_{0})$.}

As $c \rho_{L} \geq \alpha$, the upper extremity of $g_{-}^{sub}$ is the point $( \rho_{L,ex}, c \rho_{L,ex})$ (see the left of Figure~\ref{A1W}, $\rho_{L,ex}$ has been defined in Lemma~\ref{U-}). Similarly as $c \rho_{R} \geq |\alpha|$, the lower extremity of $g_{+}^{sub}$ is the point $( \rho_{R,ex}, -c \rho_{R,ex})$. If $( \rho_{L,ex}, c \rho_{L,ex})$ belongs to $\Gamma_{+}^{sub}$, as depicted on the left of Figure~\ref{FSubsonique},  we directly obtain the existence of $\alpha_{0}= c \rho_{L,ex}$. Similarly if $( \rho_{R,ex}, -c \rho_{R,ex})$ belong to $\Gamma_{+}^{sub}$ we have $\alpha_{0}= -c \rho_{R,ex}$.
\begin{psfrags}
\psfrag{a}{$\alpha$}
\psfrag{r}{$\rho$}
\psfrag{G-}{$\Gamma_{-}^{sub}$}
\psfrag{O-}{$\Omega_{-}^{sup}$} 
\psfrag{G+}{$\Gamma_{+}^{sub}$}
\psfrag{O+}{$\Omega_{+}^{sup}$} 
\psfrag{rL,qL}{$(\rho_{L},\alpha_{L})$}
\psfrag{a/c,a}{$(\rho_{0},\alpha_{0})$}
\psfrag{(ga,a)}{$(\rho_{0},\alpha_{0})$}
\psfrag{rR,qR}{$(\rho_{R},\alpha_{R})$}
\psfrag{a=cr}[][][1][26.5]{$\alpha=c \rho$}
\psfrag{a=-cr}[][][1][-23.5]{$\alpha=-c \rho$}
\begin{figure}[H]
\centering
\includegraphics[width=15cm]{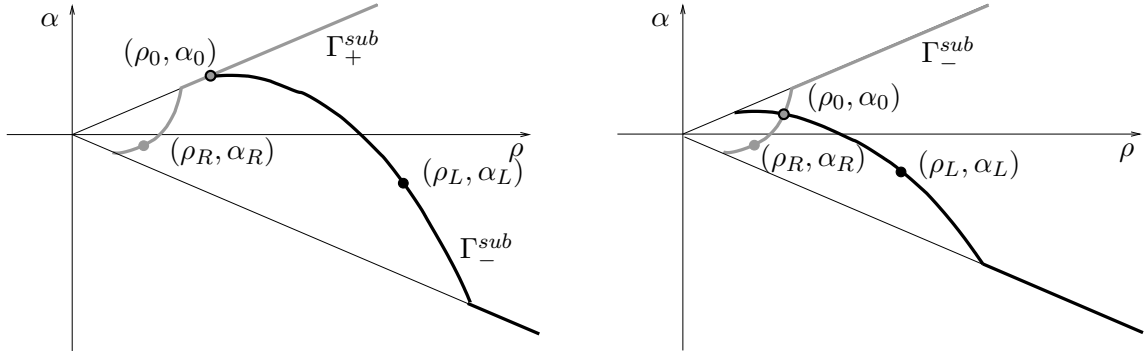}
\caption{ In black, the graph of $g_{-}^{sub}$, in grey the graph of $g_{+}^{sub}$. The intersection point $(\rho_{0}=g_{\pm}^{sub}(\alpha_{0}), \alpha_{0})$ is either on the sonic line $\alpha= c\rho$ or somewhere inside the triangle. }
 \label{FSubsonique}
 \end{figure}
\end{psfrags}
If we are not on one of these cases, as in the right of Figure~\ref{FSubsonique}, we have
$$ g_{-}^{sub}(c \rho_{L,ex})<g_{+}^{sub}(c \rho_{L,ex}) \ \ \ \textrm{ and }  \ \ \ g_{+}^{sub}(-c \rho_{R,ex})<g_{-}^{sub}(-c \rho_{R,ex}), $$
and $ \alpha_{0}$ exists  continuity of $g_{-}^{sub}-g_{+}^{sub}$. 

\vspace{3mm}

\emph{Step $3$: Conclusion.}
Let us suppose that  $\alpha_{0}>0$. According to the first step, the solution lies in the set $0 \leq \alpha \leq \alpha_{0}$ (we check on Figure~\ref{FSubsonique} that the relative positions of $g_{-}^{sub}(\alpha)$ and $g_{+}^{sub}(\alpha)$ are  only correct in that zone).  Moreover $\Delta(\alpha_{0})= 0$ and 
$$\dis \Delta(\alpha)=  \int_{g_{+}^{sub}(\alpha)}^{g_{-}^{sub}(\alpha)} \frac{1}{|D(\alpha, \rho)|} \left(c^{2}- \frac{\alpha^{2}}{r^{2}} \right) dr  \underset{\alpha \rightarrow 0_{+}}{\longrightarrow}+ \infty,$$ 
because $D(\rho, 0)=0$ and $g_{+}^{sub}(0) \neq g_{-}^{sub}(0)$. By Lemma~\ref{triangle}, the function $\Delta$ is monotonous, thus there exists a unique $\alpha$ in $[0, \alpha_{0}]$ such that $g_{+}^{sub}(\alpha) - g_{-}^{sub}(0)= 1$. Therefore, $(\rho_{-}, \alpha_{-})=(g_{-}^{sub}(\alpha), \alpha)$ and $(\rho_{+}, \alpha_{+})=(g_{+}^{sub}(\alpha), \alpha)$ verify Definition~\ref{defgermgen} and give a solution.
\end{proof}

\subsection{Resolution of the Riemann problem in the supersonic case}\label{pbR sup}
In this Section we prove Theorem~\ref{allfriction2} when $\alpha_{L}>c \rho_{L}$ or $\alpha_{R} <-c \rho_{R}$. Without loss of generality, let us assume  that $\alpha_{L}> c \rho_{L}$; the case $\alpha_{R}<- c \rho_{L}$ may be treated in a symmetrical way. Lemma~\ref{sub->sub} does not hold anymore. We must study in detail the case where  $(\rho_{L}, \alpha_{L})$ belongs to $V_{+}(\rho_{R},\alpha_{R}, v)$, which was excluded in the subsonic case. For this purpose, we introduce some notation, summarized on Figure~\ref{FSupSup}, which also recall the notation introduced in Lemmas~\ref{U-} and~\ref{U+}. In the sequel we denote by:
\begin{itemize}
\item for any subscript $i$ and for any point $(\rho_{i},\alpha_{i})$,  we denote by $(\tilde{\rho}_{i}:=\frac{\alpha_{i}^{2}}{c^{2} \rho_{i}}, \alpha_{i})$ the state reached with a shock at speed $v$. Remark that $ \dis \tilde{\tilde{\rho_{i}}}=\rho_{i}$;
 \item  $\rho_{L,ex}$ the extremity of the curve $g_{-}^{sub}$. Lemma~\ref{U-} shows that when $u_{L}-v>c$, $\rho_{L,ex}= \tilde{\rho_{L}}$ , and that 
 $$ \forall  \alpha< 0, \  (g_{-}^{sup}(\alpha), \alpha)=(\widetilde{g_{-}^{sub}(\alpha)}, \alpha) \ \textrm{ and } \ \forall  \alpha > 0, \ (g_{+}^{sup}(\alpha), \alpha)=(\widetilde{g_{+}^{sub}(\alpha)}, \alpha). $$ 
 \item $\rho_{E}= g_{+}^{sup}(\alpha_{L})$ the intersection of the line $\alpha=\alpha_{L}$ with $\Gamma_{+}^{sup}$. Note that $\dis \tilde{\rho_{E}} = g_{+}^{sub}(\alpha_{L})$. 
 \item $(\rho_{R,ex},\alpha_{R,ex})$ the extremity of the curve $g_{+}^{sub}$. We recall that, by Lemma~\ref{U+}, if $\alpha_{R} <-c \rho_{R}$, $\alpha_{R,ex}=\alpha_{R}$ and $\rho_{R,ex}= \frac{\alpha_{R}^{2}}{c^{2} \rho_{R}}$; and that if $\alpha_{R} \geq -c \rho_{R}$, $\rho_{R,ex} \leq \rho_{R}$ and $\alpha_{R,ex}=-c \rho_{R,ex}$. 
  \item $\rho_{F}= g_{-}^{sup}(\alpha_{R,ex})$ the intersection of the line $\alpha=\alpha_{R,ex}$ with $\Gamma_{-}^{sup}$. Note that $\dis \tilde{\rho_{F}} = g_{+}^{sub}(\alpha_{R,ex})$. 
\end{itemize}

\begin{psfrags}
 \psfrag{rho}{$\rho$}
 \psfrag{g+sub}{$g_{+}^{sub}$}
 \psfrag{g+sup}{$g_{+}^{sup}$}
 \psfrag{g-sub}{$g_{-}^{sub}$}
 \psfrag{g-sup}{$g_{-}^{sup}$}
 \psfrag{rl}{$\rho_L$}
 \psfrag{rlex}{$\tilde{\rho_{L}}=\rho_{L,ex}$}
 \psfrag{re}{$\rho_E$}
 \psfrag{tre}{$\tilde{\rho_E}$}
 \psfrag{rr}{$\rho_R$}
 \psfrag{rrex}{$\rho_{R,ex}$}
 \psfrag{rf}{$\rho_F$}
 \psfrag{trf}{$\tilde{\rho_F}$}
 \psfrag{a}{$\alpha$}
 \psfrag{al}{$\alpha_L$}
 \psfrag{arex}{$\alpha_{R,ex}$}
 \psfrag{u=c}{$u=c$}
 \psfrag{u=-c}{$u=-c$}
 \psfrag{O+}{$\Omega_{+}^{sup}$}
 \psfrag{O-}{$\Omega_{-}^{sup}$}
 \psfrag{(rr,ar)}{$(\rho_{R},\alpha_{R})$}
 \begin{figure}[H]
 \centering
 \includegraphics[width=15cm]{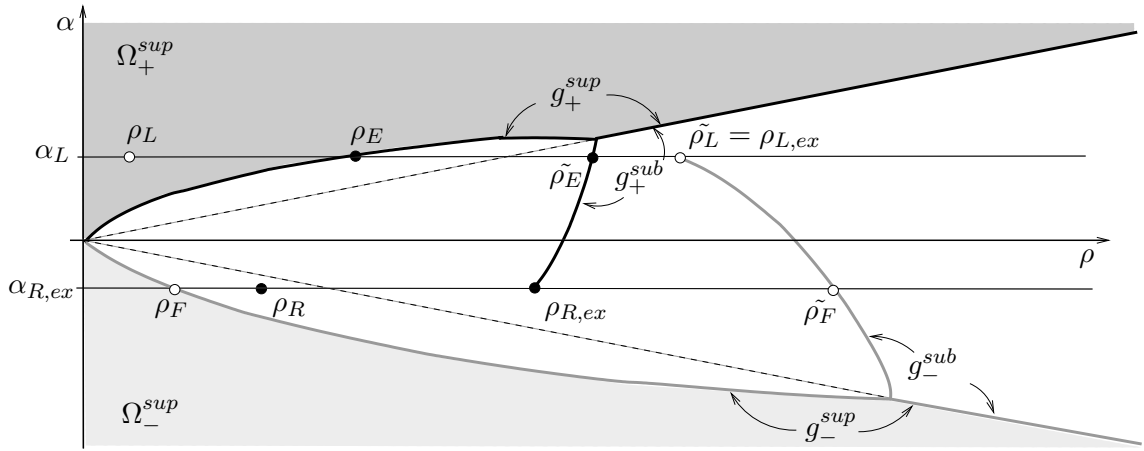}
 \caption{Notation for the supersonic case $\alpha_{L}>c \rho_{L}$. $V_{+}(v,\rho_{R},\alpha_{R})$ is the union of the open set $\Omega_{+}^{sub}$ above the graph of $g_{+}^{sup}$ and of the graph of $g_{+}^{sub}$;   $V_{-}(v,\rho_{L},\alpha_{L})$ is the union of the open set $\Omega_{-}^{sub}$  below the graph of $g_{-}^{sup}$ and of the graph of $g_{-}^{sub}$.}
 \label{FSupSup}
 \end{figure}
\end{psfrags}
We first exhibit a link between the position of $(\rho_{L}, \alpha_{L})$ and the position of $(\rho_{R}, \alpha_{R})$.
\begin{lemma} \label{restriction}
 Let $(\rho_{L},\alpha_{L}) \in \R_{+}^{*} \times \R$ and $(\rho_{R},\alpha_{R}) \in \R_{+}^{*} \times \R$ such that $\alpha_{L}> c \rho_{L}$ and $\alpha_{R}< -c \rho_{R}$. It is not possible to have
 $$(\rho_{L},\alpha_{L}) \in \Omega_{+}^{sup} \ \ \ \textrm{ and } \ \ \ (\rho_{R},\alpha_{R}) \in \Omega_{-}^{sup}.$$
\end{lemma}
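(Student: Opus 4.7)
The plan is to argue by contradiction using uniqueness of the classical Riemann problem for the homogeneous isothermal Euler equations. Assume both $(\rho_{L},\alpha_{L})\in\Omega_{+}^{sup}$ and $(\rho_{R},\alpha_{R})\in\Omega_{-}^{sup}$ hold. By definition of $V_{+}(\rho_{R},\alpha_{R},v)$, the first assumption produces a state $(\bar{\rho},\bar{q})$ such that the Riemann fan $W(\,\cdot\,;(\bar{\rho},\bar{q}),(\rho_{R},q_{R}))$ takes the value $(\rho_{L},q_{L})$ at $x/t=v_{+}$. Extending this fan by the constant state $(\rho_{L},q_{L})$ on $\{x<vt\}$ glues continuously at the line $x=vt$ and yields a self-similar entropy solution of the homogeneous Euler equations with left state $(\rho_{L},q_{L})$ and right state $(\rho_{R},q_{R})$. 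Uniqueness of the Riemann solution identifies it with $W(\,\cdot\,;(\rho_{L},q_{L}),(\rho_{R},q_{R}))$, which must therefore be constant equal to $(\rho_{L},q_{L})$ on $\{x<vt\}$; in other words, every wave of this fan travels at speed $\geq v$. A symmetric argument applied to $(\rho_{R},\alpha_{R})\in V_{-}(\rho_{L},\alpha_{L},v)$ shows that every wave of the same fan travels at speed $\leq v$.

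Combining the two inclusions, every nontrivial wave in $W(\,\cdot\,;(\rho_{L},q_{L}),(\rho_{R},q_{R}))$ must propagate at the exact speed $v$. Since the eigenvalues $\lambda_{1}=u-c<u+c=\lambda_{2}$ are separated by the positive quantity $2c$, a nondegenerate rarefaction of either family sweeps a whole nontrivial interval of speeds, and a $1$-shock and a $2$-shock cannot share a common speed. Consequently the fan collapses either to the trivial solution $(\rho_{L},q_{L})=(\rho_{R},q_{R})$, or to a single entropy shock travelling at speed $v$.

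In either case, the Rankine-Hugoniot relation for the mass equation at speed $v$, namely $q_{R}-v\rho_{R}=q_{L}-v\rho_{L}$, forces $\alpha_{L}=\alpha_{R}$. But the hypotheses of the lemma impose $\alpha_{L}>c\rho_{L}>0$ and $\alpha_{R}<-c\rho_{R}<0$, hence $\alpha_{R}<0<\alpha_{L}$, a contradiction. The most delicate step is the first one: one has to verify that trimming the fan issued from $(\bar{\rho},\bar{q})$ to the half-plane $\{x\geq vt\}$ and prolonging it by a constant state indeed produces a bona fide Riemann solution between $(\rho_{L},q_{L})$ and $(\rho_{R},q_{R})$. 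This rests on the self-similarity of $W$, on the continuity of the extension at $x=vt$ (which holds precisely because $W(v_{+};(\bar{\rho},\bar{q}),(\rho_{R},q_{R}))=(\rho_{L},q_{L})$), and on the fact that the truncation of a rarefaction is itself an admissible rarefaction, so no new entropy obstruction appears on the glueing line.
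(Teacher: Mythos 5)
Your proof is correct, but it takes a genuinely different route from the paper's. The paper argues entirely inside the $(\rho,\alpha)$-plane: it uses the monotonicity of the explicit curves $g_{-}^{sub}$ and $g_{+}^{sub}$ together with the order-reversing reflection $\rho\mapsto\tilde{\rho}=\alpha^{2}/(c^{2}\rho)$ to show that $\rho_{L}<\rho_{E}$ forces $\rho_{R}>\rho_{F}$, so the two memberships exclude each other. You instead exploit the dynamical meaning of the accessibility sets: membership of $(\rho_{L},\alpha_{L})$ in $V_{+}(\rho_{R},\alpha_{R},v)$, via truncation of the fan at $x=vt$, gluing of a constant state, and uniqueness of the classical self-similar entropy Riemann solution, forces every wave of $W(\cdot\,;(\rho_{L},q_{L}),(\rho_{R},q_{R}))$ to travel at speed $\geq v$, while the symmetric membership forces speed $\leq v$; since rarefactions sweep nontrivial intervals of speeds and a $1$-shock entering and a $2$-shock leaving a common middle state $(\rho_{*},q_{*})$ have speeds $\sigma_{1}=u_{*}-c\sqrt{\rho_{L}/\rho_{*}}<u_{*}<u_{*}+c\sqrt{\rho_{R}/\rho_{*}}=\sigma_{2}$, the fan reduces to at most a jump on the line $x=vt$, whose mass Rankine--Hugoniot relation gives $\alpha_{L}=\alpha_{R}$ (this conclusion survives even if one allows two superimposed shocks at speed $v$, by adding their mass relations), contradicting $\alpha_{L}>c\rho_{L}>0>-c\rho_{R}>\alpha_{R}$. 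Your argument is more structural: it never touches the formulas for the wave curves, it only needs the sign incompatibility $\alpha_{L}>0>\alpha_{R}$ rather than the full supersonic hypotheses, and it would transfer to other pressure laws; the price is reliance on uniqueness of the self-similar entropy Riemann solution and on the gluing step you rightly single out as delicate (unproblematic here, since the traces match on the gluing line and no new discontinuity is created). The paper's proof is shorter only because the curves $g_{\pm}^{sub}$ and the tilde map have already been set up for the rest of the supersonic analysis.
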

\begin{proof} The hypothesis  $\alpha_{R}< -c \rho_{R}$ implies that $\rho_{R,ex}= \tilde{\rho_{R}}$. Suppose that $(\rho_{L},\alpha_{L})$ belongs to  $\Omega_{+}^{sup}$. Then $\rho_{L}< \rho_{E}$, and we have $\tilde{\rho_{E}}<\tilde{\rho_{L}}$. The monotonicity of $g_{+}^{sub}$ and $g_{-}^{sub}$ gives $\tilde{\rho_{R}}=\rho_{R,ex}<\tilde{\rho_{F}}$. Thus $\rho_{R}>\rho_{F}$, which means that $(\rho_{R},\alpha_{R})$ does not belong to $\Omega_{-}^{sup}$.
\end{proof}
It allows us to exclude the case $(\rho_{R},\alpha_{R}) \in \Omega_{-}^{sup}$ of our study. Indeed, if $(\rho_{R},\alpha_{R}) \in \Omega_{-}^{sup}$, then $\alpha_{R}< -c \rho$ and $(\rho_{L},\alpha_{L})$ does not belong to $\Omega_{-}^{sup}$, and we treat that case by symmetry. We now state the result in the supersonic case.
\begin{prop} \label{RSupSup}
Let $(\rho_{L},\alpha_{L}) \in \R_{+}^{*} \times  \R $ and $(\rho_{R},\alpha_{R}) \in \R_{+}^{*} \times  \R$ such that $\alpha_{L}>c \rho_{L}$ and $(\rho_{R},\alpha_{R}) \notin \Omega_{-}^{sup}$
Then the set
$$ \dis \mathcal{G}_{D}(\lambda) \cap  \left( V_{-}(\rho_{L},\alpha_{L},v) \times V_{+}(\rho_{R},\alpha_{R},v) \right)$$
is reduced to a unique element $((\rho_{-},\alpha_{-}),(\rho_{+},\alpha_{+}))$.
\end{prop}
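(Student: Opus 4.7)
The strategy is to parametrize candidate trace pairs by the conserved value $\alpha := \alpha_- = \alpha_+$ and to invoke monotonicity to obtain uniqueness, as was done in Proposition~\ref{RSubSub}. The essential new feature is that the hypothesis $\alpha_L > c\rho_L$ places the initial state $(\rho_L,\alpha_L)$ itself in $V_-(\rho_L,\alpha_L,v)$ as a strictly supersonic ``no wave on the left'' candidate, so that the fourth bullet of Definition~\ref{defgermgen} applies and admits a whole one-parameter family of compatible right traces indexed by the internal shock position $\theta \in [0,1]$ inside the thickened particle. Away from this candidate, the admissible left traces lie on the subsonic curve $\Gamma_-^{sub}$ or in $\Omega_-^{sup}$ and are parametrized by $\alpha$ exactly as in the subsonic analysis.

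I would distinguish two cases depending on whether the no-wave candidate is compatible with $V_+$, that is, on whether $\rho_L \leq \rho_E$ (so $(\rho_L,\alpha_L) \in \overline{\Omega_+^{sup}}$) or $\rho_L > \rho_E$. In the first case I take the left trace to be $(\rho_L, \alpha_L)$ and force $\alpha = \alpha_L$; then I need to show that the $\theta$-family of germ-compatible right traces, viewed as a continuous curve in the vertical section $\{\alpha = \alpha_L\}$ starting from the subsonic state determined by $\theta = 0$ and ending at $\theta = 1$ (or at a maximal admissible $\theta_{\max}$ if the shock cannot cross the particle exit), meets $V_+ \cap \{\alpha = \alpha_L\}$ at exactly one point. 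The structure of $V_+ \cap \{\alpha = \alpha_L\}$ is known from Lemma~\ref{U+}: it consists of the supersonic interval $\{\rho \leq \rho_E\}$ together with the isolated subsonic point $\tilde{\rho_E} = g_+^{sub}(\alpha_L)$. Uniqueness of the intersection will come from the strict monotonicity of $F_{\alpha_L}$ on each of the intervals $(0, \alpha_L/c)$ and $(\alpha_L/c, +\infty)$, combined with Lemma~\ref{entropyshocks} to rule out non-entropic matches. In the second case $\rho_L > \rho_E$, the no-wave option fails because $(\rho_L,\alpha_L)$ lies outside $V_+$, so the left trace must lie on $\Gamma_-^{sub}$; Lemma~\ref{restriction} together with the excluded possibility $(\rho_R,\alpha_R)\in \Omega_-^{sup}$ forces $\alpha > 0$, which discards the $\Omega_-^{sup}$ branch and reduces the matching to the subsonic-like argument.

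The subsonic reduction then follows the template of Proposition~\ref{RSubSub}: on the interval of positive $\alpha$ where $g_-^{sub}$ and $g_+^{sub}$ are both defined and $g_-^{sub} \geq g_+^{sub}$, Lemma~\ref{triangle} gives strict monotonicity of $\Delta(\alpha) = F_\alpha(g_-^{sub}(\alpha)) - F_\alpha(g_+^{sub}(\alpha)) - 1$; I would check $\Delta(\alpha) \to +\infty$ as $\alpha \to 0^+$ using $D(\rho,0) = 0$, and $\Delta < 0$ at the upper endpoint (which is where $g_-^{sub}$ and $g_+^{sub}$ meet, exactly the analogue of the point $\alpha_0$ in the subsonic proof), so that the intermediate value theorem yields a unique zero. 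The main obstacle I anticipate is the first case: describing the image of the $\theta$-family carefully enough to prove that its continuous monotone image crosses $V_+ \cap \{\alpha = \alpha_L\}$ exactly once, handling the transition across the sonic density $\alpha_L/c$ where the parametrization degenerates. A secondary point is checking that the two cases are mutually exclusive except at the boundary $\rho_L = \rho_E$, where both constructions yield the same pair of traces, ensuring global uniqueness; the symmetric situation $\alpha_R < -c\rho_R$ is then disposed of by reversing the roles of the left and right states.
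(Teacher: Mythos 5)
Your skeleton matches the paper's: the same case split on the position of $(\rho_{L},\alpha_{L})$ relative to $\Omega_{+}^{sup}$ (i.e.\ $\rho_{L}\lessgtr\rho_{E}$), and the same fallback to the monotone function $\Delta$ of Lemma~\ref{triangle} with $\Delta(\alpha)\to+\infty$ as $\alpha\to 0^{+}$. But your treatment of the first case has a genuine gap. When $\rho_{L}\le\rho_{E}$ you fix the left trace at $(\rho_{L},\alpha_{L})$ and claim the $\theta$-family of right traces meets $V_{+}\cap\{\alpha=\alpha_{L}\}$ exactly once. This is false when the drag is strong: if $F_{\alpha_{L}}(\rho_{L})-F_{\alpha_{L}}(\rho_{E})<1$, the supersonic right trace determined by the germ has density larger than $\rho_{E}$ and therefore falls outside $\Omega_{+}^{sup}$, while the subsonic right traces produced by the $\theta$-family generically miss the single subsonic point $\tilde{\rho_{E}}=g_{+}^{sub}(\alpha_{L})$ of $V_{+}\cap\{\alpha=\alpha_{L}\}$. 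In that regime there is \emph{no} solution with left trace $(\rho_{L},\alpha_{L})$; the actual solution has a $1$-shock slower than $v$ on the left of the particle, a subsonic left trace on $\Gamma_{-}^{sub}$, and $\alpha<\alpha_{L}$. The paper's Case 1 therefore splits further according to whether $\bigl(\frac{\alpha_{L}^{2}}{\rho_{L}}+c^{2}\rho_{L}\bigr)-\bigl(\frac{\alpha_{L}^{2}}{\rho_{E}}+c^{2}\rho_{E}\bigr)$ exceeds $D(\alpha_{L})$ or not (equivalently $\Delta(\alpha_{L})\gtrless 1$), and only in the first sub-case does the no-wave-on-the-left solution exist; this is exactly the dichotomy illustrated in Section~\ref{SAsymptotics}, where for large friction the supersonic option always fails and the solution becomes subsonic.

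A second, related omission: even when the supersonic solution does exist, you only argue uniqueness \emph{within} the section $\{\alpha=\alpha_{L}\}$. You must also exclude a coexisting subsonic solution with some $\alpha\in(0,\alpha_{L})$ built from $\Gamma_{-}^{sub}\times\Gamma_{+}^{sub}$. The paper does this by observing that the supersonic existence condition forces $\Delta(\alpha_{L})>1$ and then invoking the monotonicity of $\Delta$ from Lemma~\ref{triangle}; this is precisely where the hypotheses on $D$ (increasing in $\alpha$, $|D|$ decreasing in $\rho$) enter, and Section~\ref{SMultipleSolutions} shows that without them the supersonic and subsonic branches coexist and produce up to three solutions, so no argument that ignores this cross-branch comparison can be complete. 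A minor further point: the paper's main-text proof is written for $D=D(\alpha)$, where the $\theta$-family degenerates to a single right trace, and relegates the genuinely $\rho$-dependent case to Appendix~\ref{SFrictions}; your $\theta$-family discussion is closer in spirit to the appendix, but there it is Hypothesis~\eqref{croissance} that controls the position of the interval $[\rho_{0,+},\rho_{1,+}]$ relative to $\tilde{\rho_{E}}$, a step your sketch does not supply.
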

\begin{proof}  We prove the result for a drag force only depend on $\alpha$, in which case we simply denote it by $D(\alpha)$. Because of the compatibility between the germ $\mathcal{G}_{D}(v)$ and the shocks at speed $v$, the germ is smaller and easier to describe. Instead of Definition~\eqref{defgermgen}, we can use the three relations of Corollary~\ref{sub->sub} and~\eqref{linearreformulation}. Even though it simplifies the proof, all the key ingredients are present in that case. The proof of Theorem~\ref{allfriction2} in the general case is given in Appendix~\ref{SFrictions}.

The proof relies on the relative positions of $\rho_{L}$ and $\rho_{E}$, and on the fact that the transformation $(\rho, \alpha) \mapsto \left( \tilde{\rho}= \frac{\alpha^{2}}{c^{2} \rho}, \alpha \right)$ reverses the positions of points around the point $(\alpha/c, \alpha)$, as depicted on Figure~\ref{FSupSup}.

\emph{Case $1$: $\rho_{L} \leq \rho_{E}$, or equivalently $(\rho_{L}, \alpha_{L}) \in \Omega_{+}^{sup}$. }

In that case, we can chose $(\rho_{-}, \alpha_{-})= (\rho_{L}, \alpha_{L})$ if and only if
$$ \left( \frac{\alpha_{L}^{2}}{\rho_{L}}+c^{2} \rho_{L} \right) -  \left( \frac{\alpha_{L}^{2}}{\rho_{E}}+c^{2} \rho_{E} \right) > D( \alpha_{L}).$$
Assume that this inequality is fulfilled. Then, as $ f_{\alpha_{L}}: \rho \mapsto \frac{\alpha_{L}^{2}}{\rho}+c^{2} \rho$ decreases on $(0, \frac{\alpha_{L}}{c})$, there exists a unique $\rho_{+} \in (\rho_{L}, \rho_{E})$ such that
$$ \left( \frac{\alpha_{L}^{2}}{\rho_{L}}+c^{2} \rho_{L} \right) -  \left( \frac{\alpha_{L}^{2}}{\rho_{+}}+c^{2} \rho_{+} \right) = D( \alpha_{L}).$$
Therefore we obtain the solution
$$ ((\rho_{L},\alpha_{L}), (\rho_{+},\alpha_{L})) \in \mathcal{G}_{D}(v) \cap  \left( V_{-}(\rho_{L},\alpha_{L},v) \times V_{+}(\rho_{R},\alpha_{R},v) \right).$$
In that case we have 
$$ \left( \frac{\alpha_{L}^{2}}{\tilde{\rho_{L}}}+c^{2} \tilde{\rho_{L}} \right) -  \left( \frac{\alpha_{L}^{2}}{\tilde{\rho_{E}}}+c^{2} \tilde{\rho_{E}} \right) > D( \alpha_{L}),$$
so $\Delta(\alpha_{L})>1$.  The function $g^{sub}_{-}-g^{sub}_{+}$ decreases, so it remain positive on $(0,\alpha_{L})$. Therefore, Lemma~\ref{L:sub->sub} shows that there is no other solution. Suppose now that 
 $$ \left( \frac{\alpha_{L}^{2}}{\rho_{L}}+c^{2} \rho_{L} \right) -  \left( \frac{\alpha_{L}^{2}}{\rho_{E}}+c^{2} \rho_{E} \right) < D( \alpha_{L}).$$
Thus we have that  $\Delta(\alpha_{L})<1$ and $\Delta(0)= + \infty$ and we conclude for the existence and the uniqueness as in Proposition~\ref{RSubSub}.
 
 \emph{Case $2$: $\rho_{L}  > \rho_{E}$, or equivalently $(\rho_{L}, \alpha_{L}) \notin \Omega_{+}^{sup}$. }

It implies that $\tilde{\rho_{E}} > \tilde{\rho_{L}}$. Moreover, $(\rho_{R}, \alpha_{R})$ does not belong to $\Omega_{-}^{sup}$. The fact that $\rho_{F}< \rho_{R}$ implies that $\tilde{\rho_{F}}\geq \rho_{R,ex}$ and that $\Delta(\alpha_{L})<0$. We conclude exactly as in the subsonic case, see the proof of Proposition~\ref{RSubSub}.
\end{proof}

\subsection{Asymptotics} \label{SAsymptotics}
Depending on the intensity of the drag force $D$, the model~\eqref{eqcouple} exhibit a whole range of behavior, from the lack of particle to the presence of a solid wall.
\begin{prop} Suppose that the drag force writes $D(\alpha)= \lambda D_{0}(\alpha)$, with $D_{0}$ a fixed drag force. When $\lambda$ tends to infinity, the solution of~\eqref{pbR} tends to the solution of the Riemann problem with the same initial data for the Euler equation with a solid wall along $x=vt$. When $\lambda$ vanishes, the solution of~\eqref{pbR} tends to the solution of the Riemann problem with the same initial data for the Euler equation without particle. 
\end{prop}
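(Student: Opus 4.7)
The plan is to exploit the concise description of the germ available when $D$ depends only on $\alpha$: any admissible pair of traces satisfies $\alpha_-^\lambda = \alpha_+^\lambda =: \alpha^\lambda$ together with the reformulation~\eqref{linearreformulation},
$$\left( \frac{(\alpha^\lambda)^2}{\rho_-^\lambda} + c^2 \rho_-^\lambda \right) - \left( \frac{(\alpha^\lambda)^2}{\rho_+^\lambda} + c^2 \rho_+^\lambda \right) = \sign(\alpha^\lambda) \, \lambda \, D_0(\alpha^\lambda).$$
First I would establish uniform a priori bounds on $(\rho_-^\lambda, \rho_+^\lambda, \alpha^\lambda)$ as $\lambda$ ranges over an interval of $(0, +\infty)$. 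The accessible sets $V_\pm(\rho_{L/R}, \alpha_{L/R}, v)$ described in Lemmas~\ref{U-} and~\ref{U+} confine the traces to compact subsets of $(0,+\infty) \times \R$ once one checks that in each case of Propositions~\ref{RSubSub} and~\ref{RSupSup}, the common $\alpha^\lambda$ lies between explicit bounds that depend only on the Riemann data, not on $\lambda$. Up to extracting a subsequence, one then has $(\rho_\pm^\lambda, \alpha^\lambda) \to (\rho_\pm^*, \alpha^*)$ in the interior of the physical domain.

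For $\lambda \to 0$, passing to the limit in the jump relation gives $\frac{(\alpha^*)^2}{\rho_-^*} + c^2 \rho_-^* = \frac{(\alpha^*)^2}{\rho_+^*} + c^2 \rho_+^*$, together with $\alpha_-^* = \alpha_+^* = \alpha^*$. These are exactly the Rankine--Hugoniot conditions for a jump of speed $v$ in the isothermal Euler equations, so the limit trace pair is either continuous or corresponds to an Euler $v$-shock. By Lemma~\ref{entropyshocks}, such a shock is entropic only when the state on the upstream side of the particle is supersonic with respect to $v$; otherwise the only option is $(\rho_-^*, q_-^*) = (\rho_+^*, q_+^*)$, and this common value is the trace at $x/t = v$ of the classical Riemann solution between $(\rho_L, q_L)$ and $(\rho_R, q_R)$. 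In the remaining supersonic branch, the same Lax-curve bookkeeping used in Proposition~\ref{RSupSup} identifies the limit with the corresponding classical Euler Riemann solution (without source). Since the limit is uniquely determined, the full family converges.

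For $\lambda \to +\infty$, the left-hand side of the jump relation is uniformly bounded, so $\lambda D_0(\alpha^\lambda)$ is bounded, hence $D_0(\alpha^\lambda) \to 0$. Because $D_0$ has the same sign as its argument and vanishes only at $0$, this forces $\alpha^\lambda \to 0$, i.e.\ $u_\pm^\lambda - v = \alpha^\lambda/\rho_\pm^\lambda \to 0$ on both sides of the particle. This is precisely the impermeability condition of a solid wall moving at speed $v$. Any limit point $(\rho_-^*, v \rho_-^*)$ must therefore lie in the intersection $V_-(\rho_L, \alpha_L, v) \cap \{\alpha = 0\}$, and this intersection is reduced to a single point by the monotonicity of $g_-^{sub}$ and the monotonicity of $g_-^{sup}$ established in the proof of Lemma~\ref{U-}; symmetrically for the right trace. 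By construction this single point is the value at $x = (vt)_\pm$ of the Euler Riemann solution with a solid wall at $x = vt$.

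The main obstacle is controlling the case dichotomy of Proposition~\ref{RSupSup} as $\lambda$ varies: one must ensure that $\alpha^\lambda$ does not approach the sonic boundary $\pm c\rho_\pm^\lambda$ in an uncontrolled way, and that the threshold $\Delta(\alpha_L) = 1$ separating the two sub-cases is crossed continuously in $\lambda$. This is a continuity-in-parameter statement that follows from the strict monotonicity of $\Delta$ in $\alpha$ (Lemma~\ref{triangle}) combined with the fact that the right-hand side of~\eqref{linearreformulation} depends smoothly on $\lambda$; once this is checked, passing to the limit in the germ relation is straightforward and uniqueness of the limit in each regime upgrades subsequential convergence to full convergence.
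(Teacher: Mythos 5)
Your argument is correct, and it reaches the same conclusions as the paper, but the mechanism for identifying the limits is packaged differently. The paper works directly with the scalar equation $\Delta(\alpha)=1$ that determines the traces (here $\Delta$ scales like $1/\lambda$ because $F_\alpha$ carries a factor $1/|D|$): as $\lambda\to\infty$, $\Delta(\alpha)\to 0$ for every fixed $\alpha\neq 0$, so the root of the monotone function $\Delta$ is pushed to $\alpha=0$ and the traces converge to $(g_\pm^{sub}(0),0)$, the wall solution; as $\lambda\to 0$, $\Delta\to+\infty$ away from the crossing point $\alpha_0$ of $g_-^{sub}$ and $g_+^{sub}$, so the root converges to $\alpha_0$, and in the supersonic branch $(\rho_L,\alpha_L)\in\Omega_+^{sup}$ the strict inequality selecting the ``no wave on the left'' solution holds for small $\lambda$, with $\rho_+\to\rho_L$. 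Your route — uniform compactness of the traces on the curves of Lemmas~\ref{U-} and~\ref{U+}, passage to the limit in the germ relation~\eqref{linearreformulation}, then identification of the limit (impermeability $\alpha^*=0$ for $\lambda\to\infty$; Rankine--Hugoniot at speed $v$ ruled out by Lemma~\ref{entropyshocks} in the subsonic range, hence continuity of the trace, for $\lambda\to 0$) — is equivalent: your boundedness-of-the-left-hand-side argument is exactly the statement $\Delta(\alpha^\lambda)=1$ with $\Delta=O(1/\lambda)$, and your injectivity of $\rho\mapsto\alpha^2/\rho+c^2\rho$ on the subsonic range plays the role of the paper's observation that the limit is the crossing point. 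What your version buys is robustness: it does not rely on tracking a single monotone root and would extend to drag forces depending on $\rho$, at the price of a subsequence-extraction step that the paper avoids. What it costs is that the case dichotomy of Proposition~\ref{RSupSup} must be controlled by hand, as you note; the paper disposes of it in one line by observing that the supersonic selection inequality fails for large $\lambda$ and holds for small $\lambda$. No gap, but when invoking the intersection $V_-\cap\{\alpha=0\}$ you only need that $\Gamma_-^{sub}$ is the graph of a strictly monotone function — $\Omega_-^{sup}$ lies in $\{\alpha<-c\rho\}$ and never meets $\{\alpha=0\}$, so its monotonicity is irrelevant there.
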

 
  \begin{figure}[H]
 \centering
 \includegraphics[width=13cm]{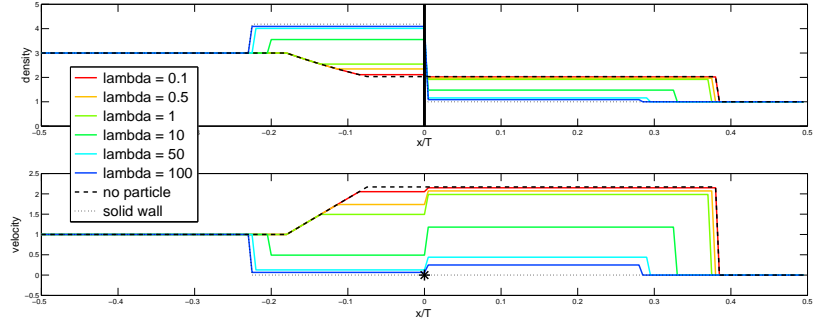}
 \caption{Exact solutions of the same Riemann problem for different friction coefficients $\lambda$}
 \label{FAsymSub}
  \end{figure} 
  
\begin{proof}
 Let us begin by the case $\lambda \longrightarrow + \infty$. Then in the supersonic case, it becomes impossible to chose $(\rho_{-}, \alpha_{-})= (\rho_{L}, \alpha_{L})$ because the inequality
 $$ \left( \frac{\alpha_{L}^{2}}{\rho_{L}}+c^{2} \rho_{L} \right) -  \left( \frac{\alpha_{L}^{2}}{\rho_{E}}+c^{2} \rho_{E} \right) > \lambda D_{0}(\alpha_{L})$$
 will always fail. This is illustrated by Figure~\ref{FAsymSup}, where the drag force is $D(\alpha)= \lambda \alpha$: this property holds for $\lambda=1$, while it is lost for $\lambda=20$. 
  \begin{figure}[h]
 \centering
 \includegraphics[width=13cm]{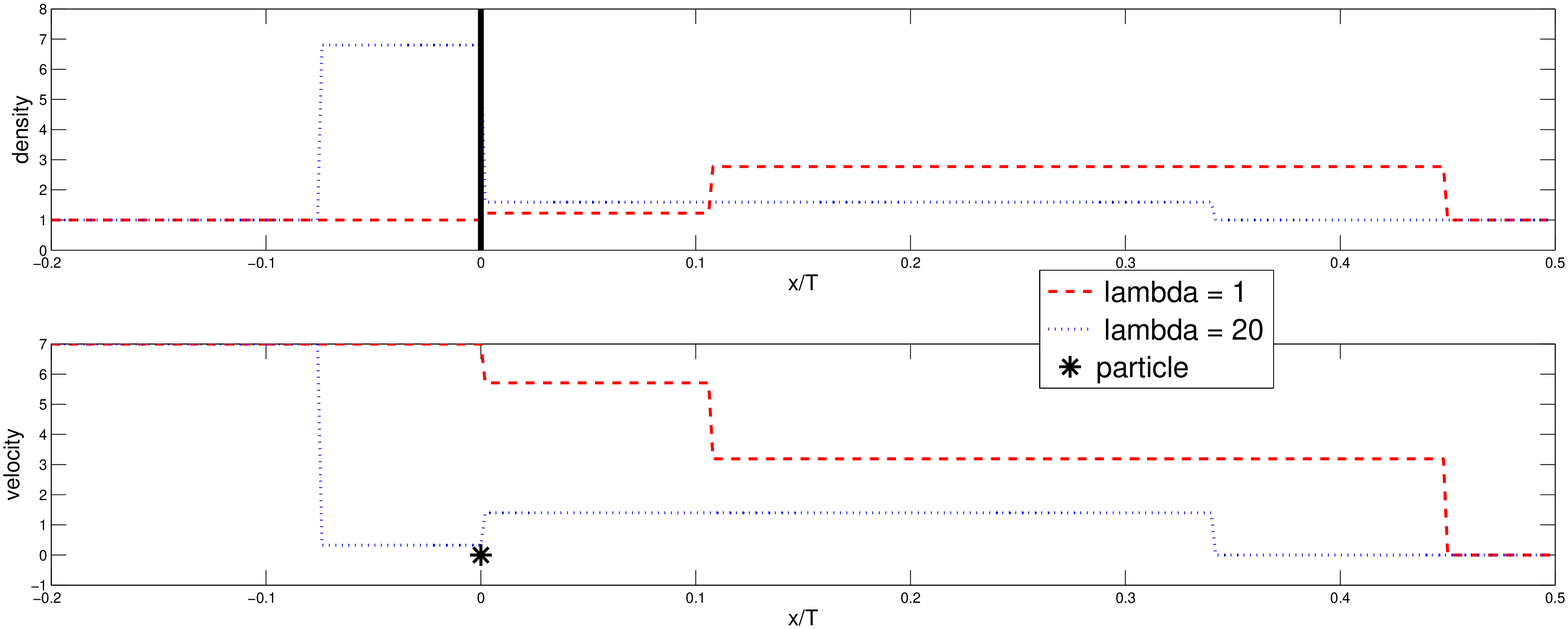}
 \caption{Two types of solutions when $\alpha_{L}> c \rho_{L}$, for the drag force $D(\alpha)= \lambda \alpha$. For small enough $\lambda$ there is no wave on the left of the particle. As $\lambda$ becomes greater, we recover the subsonic solution.}
 \label{FAsymSup}
 \end{figure} 
 In the subsonic case, as $g_{-}^{sub}$ and $g_{+}^{sub}$ take value in a bounded interval, for all non null $\alpha$, $\Delta (\alpha)$ tends to $0$ as $\lambda$ tends to infinity. Therefore, $ (\rho_{-}, \alpha_{-})$ tends to $(g_{-}^{sub}(0) ,0)$ while $(\rho_{+}, \alpha_{+})$ tends to $(g_{+}^{sub}(0) , 0)$. This is exactly the solution for the Riemann problem with a solid wall. When $\lambda$ tends to zero, remark that in the subsonic case (or the supersonic case when $(\rho_{L}, \alpha_{L})$ is not in $\Omega_{+}^{sup}$) the solution of $ \Delta(\alpha)=0$ tends to the crossing point $\alpha_{0}$. In the supersonic case when $(\rho_{L}, \alpha_{L}) \in \Omega_{+}^{sup}$, the inequality
$$ \left( \frac{\alpha_{L}^{2}}{\rho_{L}}+c^{2} \rho_{L} \right) -  \left( \frac{\alpha_{L}^{2}}{\rho_{E}}+c^{2} \rho_{E} \right) > \lambda D_{0}( \alpha_{L})$$
is verified for small enough $\lambda$, and $(\rho_{-},\alpha_{-})$ is $(\rho_{L}, \alpha_{L})$ while $(\rho_{+},\alpha_{+})$ tends to $(\rho_{L}, \alpha_{L})$. It corresponds to the value of the solution of the Euler equation without particle on the line $x=vt$. Those two asymptotics behaviors are depicted, in the subsonic case, on Figure~\ref{FAsymSub}.
\end{proof}

\section{Existence of up to three solutions when $|D|$ is not a decreasing function of $\rho$} \label{SMultipleSolutions}
When $|D|$ is not a decreasing function of $\rho$,  Condition~\eqref{croissance} may not hold and we can lose uniqueness and obtain up to three solutions. This is not surprising: the choice $D(\rho,u-v)= \rho$ is, up to a change of variable and pressure law, similar to the problem of the shallow water with a discontinuous topography, where these three solutions arise. In a more general framework, hyperbolic systems with resonant source term like ours have been investigate in~\cite{IT92} and~\cite{GLF04}, where the possible coexistence of three solutions is proved.

Suppose that this condition is reversed in
 \begin{equation} \label{decroissance}
 \forall \alpha, \ \forall \rho_{1}< \rho_{2} \leq \frac{|\alpha|}{c}, \ \ F_{\alpha}(\rho_{1})-F_{\alpha}(\rho_{2}) \geq F_{\alpha} (\tilde{\rho_{1}}) - F_{\alpha} (\tilde{\rho_{2}}). 
\end{equation}
Fix $\alpha_{L}>0$ and $\rho_{L}< \frac{\alpha_{L}}{c}$. For $0 \leq \theta_{1} \leq \theta_{2} \leq 1$, consider $\rho_{1}$ and $\rho_{2}$ such that
 $$F_{\alpha_{L}}(\rho_{L})- F_{\alpha_{L}}(\rho_{i})= \theta_{i}, $$
 and $\bar{\rho_{1}}$ and $\bar{\rho_{2}}$ such that
 $$F_{\alpha_{L}}(\tilde{\rho_{i}})- F_{\alpha_{L}}(\bar{\rho_{i}})= 1-\theta_{i}. $$
 We recall that $\tilde{\rho_{i}}= \frac{\alpha_{L}^{2}}{c^{2} \rho_{i}}$.
 Then we have
 $$
 \begin{aligned}
 F_{\alpha_{L}}(\bar{\rho_{1}})-F_{\alpha_{L}}(\bar{\rho_{2}}) &=  ( F_{\alpha_{L}}(\bar{\rho_{1}})-F_{\alpha_{L}}(\tilde{\rho_{1}}) )+ ( F_{\alpha_{L}}(\tilde{\rho_{1}})-F_{\alpha_{L}}(\tilde{\rho_{2}}) ) +  (F_{\alpha_{L}}(\tilde{\rho_{2}})-F_{\alpha_{L}}(\bar{\rho_{2}}) ) \\
 										&\leq -(1-\theta_{1})+ ( F_{\alpha_{L}}(\rho_{1})-F_{\alpha_{L}}(\rho_{2}) ) + (1-\theta_{2}) \ \ \textrm{ with~\eqref{decroissance}}\\
										& \leq 0.
\end{aligned}
 $$
 It makes possible the coexistence of three facts that exclude each other under Hypothesis~\eqref{croissance}.
\begin{itemize}
\item First,  there exists $(\rho_{0}, \alpha_{L}) \in \Omega_{+}^{sup}$ such that:
$$ F_{\alpha_{L}}(\rho_{L})-F_{\alpha_{L}}(\rho_{0}) = 1. $$
Thus $(\rho_{-}, \alpha_{-})= (\rho_{L},\alpha_{L})$ and $(\rho_{+}, \alpha_{+})= (\rho_{0},\alpha_{L})$ gives a solution that has no wave on the left of the particle, and two supersonic waves on its right, as depicted in Figure~\ref{3SolSupSup};
\item Second,
$$ F_{\alpha_{L}}(\tilde{\rho_{L}})-F_{\alpha_{L}}(\tilde{\rho_{E}}) \leq 1. $$
If $D$ is still an increasing function of $\alpha$, we can apply the proof of Proposition~\ref{RSubSub} to obtain the existence of a pair of subsonic traces. This solution has a $1$-wave on the left of the particle, and a $2$-wave on its right, as depicted in Figure~\ref{3SolSubSub};
\item Finally, the state reached on the right of the particle by jumping immediately ($\xi^{0}= -\eps/2$ or $\theta=0$ in Theorem~\ref{Tdefgermgen}) is smaller than $\tilde{\rho_{E}}$, while the state reached  by jumping in the end ($\xi^{0}= \eps/2$ or $\theta=1$) is larger than $\tilde{\rho_{E}}$. Then $(\rho_{-}, \alpha_{-})=(\rho_{I},\alpha_{L})$ and $(\rho_{+}, \alpha_{+})=(\tilde{\rho_{E}},\alpha_{L})$ are admissible traces around the particle. The corresponding solution has no wave on its left, and just a $2$-wave on its right, as depicted in Figure~\ref{3SolSupSub}.
\end{itemize}

 \begin{figure}[H]
 \centering
 \includegraphics[width=13cm]{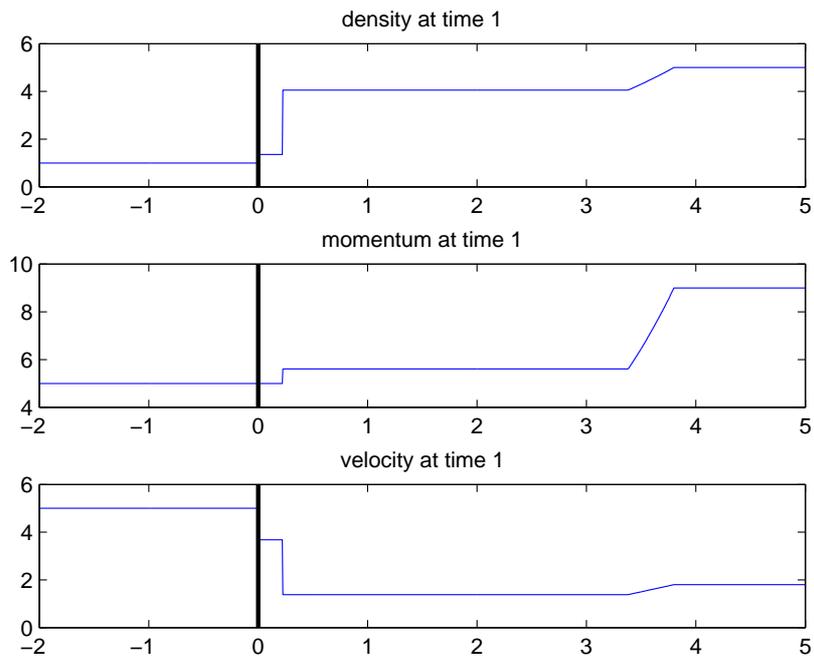}
 \caption{Solution with two supersonic traces}
 \label{3SolSupSup}
 \end{figure}
   \begin{figure}[H]
 \centering
 \includegraphics[width=13cm]{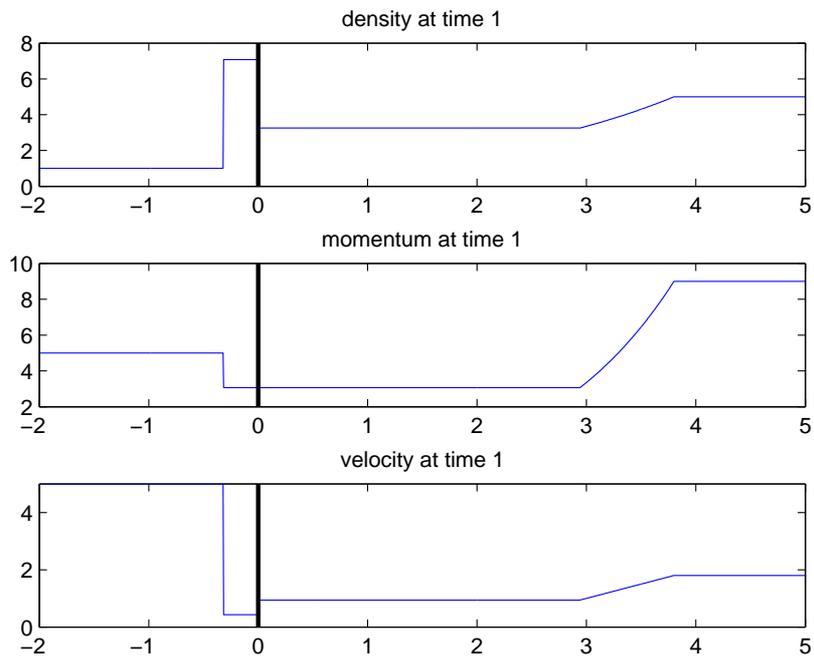}
 \caption{Solution with two subsonic traces}
 \label{3SolSubSub}
 \end{figure}
  \begin{figure}[H]
 \centering
 \includegraphics[width=13cm]{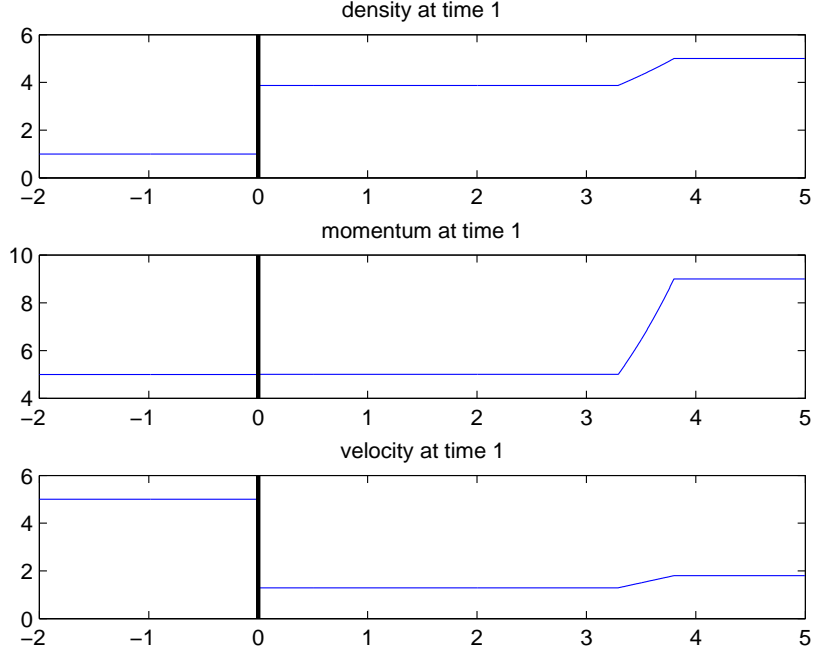}
 \caption{Solution with one supersonic trace (on the left) and one subsonic trace (on the right)}
 \label{3SolSupSub}
 \end{figure}
Figures~\ref{3SolSupSup},~\ref{3SolSupSub} and~\ref{3SolSubSub} represent the three exact solutions at time $T=1$ that are obtained for the Riemann problem
\begin{equation}
 \left\{
 \begin{array}{ll}
 \partial_t \rho + \partial_x q  = 0, \\
 \partial_t q + \partial_x \left( \frac{q^2}{\rho} +4 \rho \right) = -0.9 \rho^{2} u  \delta_{0}, \\
\rho(0,x) = 1 \ \mathbf{1}_{x<0} + 5 \  \mathbf{1}_{x>0}, \\
q(0,x) = 5 \ \mathbf{1}_{x<0} + 9 \ \mathbf{1}_{x>0}. 
\end{array} 
\right. 
\end{equation}
In that case we have $F_{\alpha}(\rho)= \frac{\alpha}{2\rho^{2}}+ \frac{c^{2}}{\alpha} \log(\rho)$. The traces have been numerically computed thanks to the previous analysis.
Another way to see that we can lose uniqueness, and obtain up to three solutions, is depicted on Figure~\ref{GLF}. Following~\cite{GLF04}, we introduce a merged $1$-wave, which regroups all the state $(\rho_{I},\alpha_{I})$ that can be reached from $(\rho_{L},\alpha_{L})$ through three successive steps:
\begin{itemize}
 \item From $(\rho_{L},\alpha_{L})$ we reach a state $(\rho_{-},\alpha_{-})$ on the left of the particle, by following a $1$-wave and a $2$-wave, both traveling at a speed smaller than the particle's velocity $v$:
 $$ (\rho_{-}, \alpha_{-}) \in V_{-}(\rho_{L},\alpha_{L},v);$$
 \item From  $(\rho_{-},\alpha_{-})$ we reach, through the particle, a state $(\rho_{+},\alpha_{+})$:
 $$((\rho_{-}, \alpha_{-}),(\rho_{+}, \alpha_{+}))\in \mathcal{G}_{D}(v); $$
 \item From $(\rho_{+},\alpha_{+})$ we reach $(\rho_{I},\alpha_{I})$ with a $1$-wave traveling faster than $v$.
\end{itemize}
The solutions of the Riemann problem~\eqref{pbR} are the intersections between this merged $1$-wave and the usual curve of $2$-waves arriving in $(\rho_{R},\alpha_{R})$. In the case of a supersonic left state the merged $1$-wave contains three different types of state:
\begin{itemize}
 \item Those obtained by taking $(\rho_{-},\alpha_{-})=(\rho_{L},\alpha_{L})$, then by decreasing continuously the quantity $F_{\alpha_{L}}$ of $1$ inside the particle. In that case $(\rho_{+}, \alpha_{+})$ is supersonic, and we can carry on with any $1$-rarefaction wave and some $1$-shocks to reach $(\rho_{I}, \alpha_{I})$. This is part $1$ of the black curve on Figures~\ref{GLF} and~\ref{GLFUnique}.
 \item Those obtained by taking $(\rho_{-},\alpha_{-})=(\rho_{L},\alpha_{L})$, then by decreasing continuously the quantity $F_{\alpha_{L}}$ of $ \theta$ for some $\theta \in [0, 1]$, making a shock inside the particle,  and finally continuously decreasing of  $(1- \theta) $ along $F_{\alpha_{L}}$. In that case $(\rho_{+},q_{+})$ is subsonic and there exists no $1$-wave faster than $v$ starting from $(\rho_{+},q_{+})$. This is part $2$ of the black curve on Figures~\ref{GLF} and~\ref{GLFUnique}.
 \item Those obtained by starting from $(\rho_{L},\alpha_{L})$ with a $1$-shock slower than the particle. There exists no such $1$-rarefaction wave and we reach a subsonic state $(\rho_{-},\alpha_{-})$, which lies on the dashed gray line on Figure~\ref{GLF}. The state $(\rho_{+},\alpha_{+})$ is necessarily obtained by decreasing continuously  of $1$ along the graph of $F_{\alpha_{-}}$. Therefore, $(\rho_{+},\alpha_{+})$ is subsonic and there exists no $1$-wave faster than $v$ starting from $(\rho_{+},q_{+})$. This is part $3$ of the black curve on Figures~\ref{GLF} and~\ref{GLFUnique}.
\end{itemize}
As we can see on Figures~\ref{GLF} and~\ref{GLFUnique}, the shape of the merged $1$-wave depends on the relative positions of the densities $\rho_{0,+}$ and $\rho_{1,+}$. The Hypothesis~\eqref{croissance} ensures that $\rho_{0,+} \leq \rho_{1,+}$, and the merged $1$-wave curve can be parametrized by $\rho$ as in Figure~\ref{GLFUnique}. If this hypothesis does not hold, it becomes possible that $\rho_{0,+} > \rho_{1,+}$, in which case the merged $1$-wave curve has a Z-shape and can intersect the $2$-waves curve up to three times as in Figure~\ref{GLF}.
\begin{rem}
 When $D$ depends only on $\alpha$, $\rho_{0,+}=\rho_{1,+}$ and the segment disappears. 
\end{rem}

\begin{psfrags} 
 \psfrag{c1}{$1$}
 \psfrag{c2}{$2$}
 \psfrag{c3}{$3$}
 \psfrag{r0+}{$\rho_{0,+}$}
 \psfrag{r1+}{$\rho_{1,+}$}
 \psfrag{(rl,al)}{$(\rho_{L}, \alpha_{L})$}
 \psfrag{(rr,ar)}{$(\rho_{R}, \alpha_{R})$}
 \begin{figure}[H]
 \centering
 \includegraphics[width=17cm]{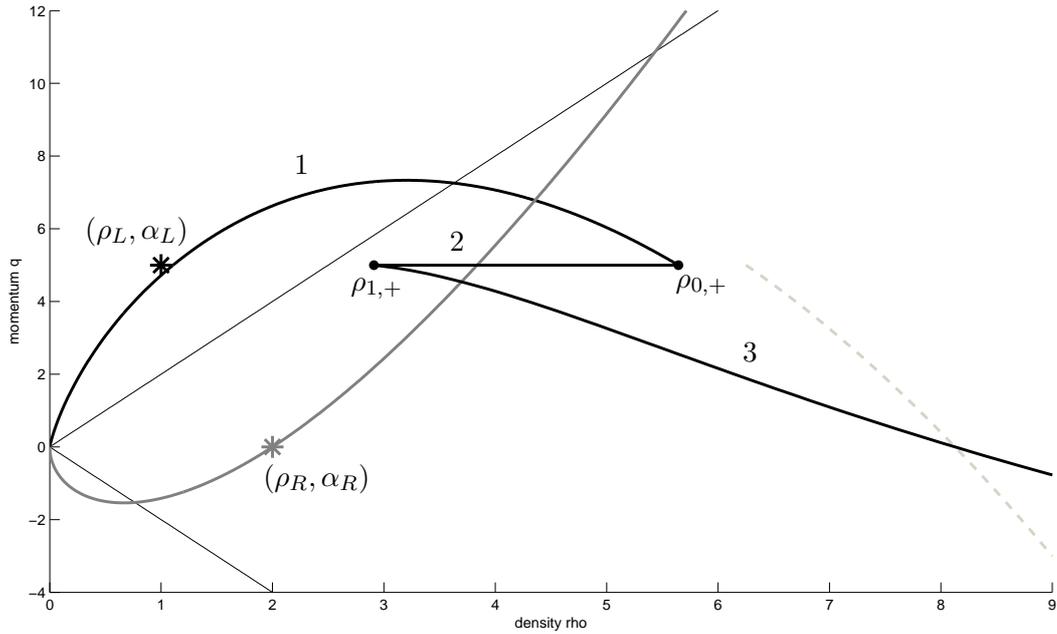}
 \caption{Example of non uniqueness with the friction $D(\rho,q,h')=\rho^{2}(u-h')= \rho \alpha$. The gray line is the usual curve of $2$-wave arriving in $(\rho_{R},\alpha_{R})$ (gray star). The black line is the merged $1$-curve from $(\rho_{L},\alpha_{L})$ (black star). }
 \label{GLF}
 \end{figure}
\end{psfrags}
\begin{psfrags} 
 \psfrag{c1}{$1$}
 \psfrag{c2}{$2$}
 \psfrag{c3}{$3$}
 \psfrag{r0+}{$\rho_{0,+}$}
 \psfrag{r1+}{$\rho_{1,+}$}
 \psfrag{(rl,al)}{$(\rho_{L}, \alpha_{L})$}
 \psfrag{(rr,ar)}{$(\rho_{R}, \alpha_{R})$}
 \begin{figure}[H]
 \centering
 \includegraphics[width=17cm]{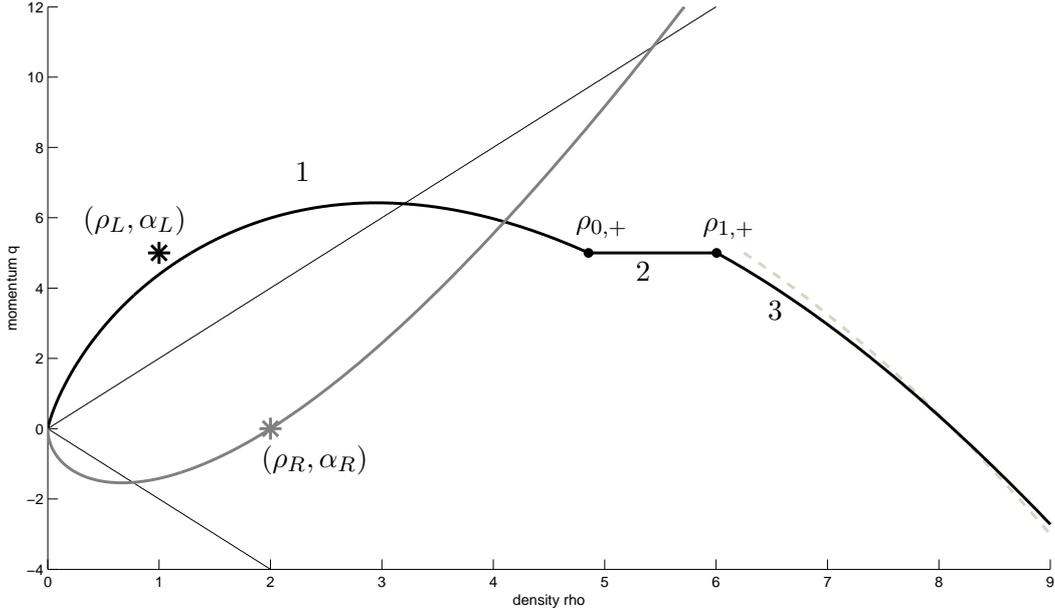}
 \caption{Example of uniqueness with the friction $\Gamma(\rho,q,h')= u-h' = \frac{\alpha}{\rho}$. The gray line is the usual curve of $2$-wave arriving in $(\rho_{R},\alpha_{R})$ (gray star). The black line is the merged $1$-curve from $(\rho_{L},\alpha_{L})$ (black star). }
 \label{GLFUnique}
 \end{figure}
\end{psfrags}

\begin{appendices}
\section{Proof of Proposition~\ref{RSupSup} when $D$ also depends on the density} \label{SFrictions}

When $D$ also depends on $\rho$, the germ is much larger than when it depends only of $\alpha$. For $(\rho_{-}, \alpha)$ is fixed, with $\alpha>0$ and $c \rho_{-}< \alpha$, it contains one supersonic state $(\rho_{0}, \alpha)$, and a whole set of subsonic states $(\rho, \alpha)$ with $\rho$ taking values in some interval $[\rho_{0+}, \rho_{1+}]$. When $D$ depends only on $\alpha$, this interval reduces to a single point $\rho_{0+} = \rho_{1+} = \tilde{\rho_{0}}$. In that case we only had to worry about the relative positions of $\rho_{0}$, $\rho_{E}$ and $\rho_{L}$ on the one hand, and of  $\tilde{\rho_{0}}$, $\tilde{\rho_{E}}$ and $\tilde{\rho_{L}}$ on the other hand (see Figure~\ref{FSupSup} for the notation). Those relative positions were easy to deduce from one another. In the general case, we have to study the relative positions of $\rho_{0}$, $\rho_{E}$ and $\rho_{L}$ on the one hand, and of the whole interval $[\rho_{0+}, \rho_{1+}]$ with $\tilde{\rho_{E}}$ and $\tilde{\rho_{L}}$ on the other hand. The following property insures that those relative are linked to each other nicely.
\begin{prop}
 if $|D|$ is a decreasing function of $\rho$, then for every $\alpha \in \R$, for all states $(\rho_{1}, \alpha)$ and $(\rho_{2}, \alpha)$ in $\R_{+}^{*}\times \R^{*}$ with $\rho_{1}<\rho_{2} \leq \frac{|\alpha|}{c}$ we have
\begin{equation} \label{croissance}
 F_{\alpha}(\rho_{1})-F_{\alpha}(\rho_{2}) \leq F_{\alpha} \left(\frac{\alpha^{2}}{c^{2} \rho_{1}} \right) - F_{\alpha} \left( \frac{\alpha^{2}}{c^{2} \rho_{2}} \right).
\end{equation}
\end{prop}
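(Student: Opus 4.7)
The plan is to rewrite both sides of the claimed inequality as integrals over the same interval $[\rho_1,\rho_2]$, and then compare the two integrands pointwise using the monotonicity hypothesis on $|D|$.

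First I would write the left-hand side directly from the definition (using $F_\alpha'(\rho) = (c^2 - \alpha^2/\rho^2)/|D(\rho,\alpha)|$, as extracted in Lemma~\ref{lemmagerm}):
$$F_\alpha(\rho_1) - F_\alpha(\rho_2) \;=\; \int_{\rho_1}^{\rho_2} \frac{\alpha^2/s^2 - c^2}{|D(s,\alpha)|}\, ds,$$
which is non-negative because $s \leq \rho_2 \leq |\alpha|/c$ on the whole interval.

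Next I would tackle the right-hand side via the involution $r \mapsto \alpha^2/(c^2 r)$. Setting $r = \alpha^2/(c^2 s)$ in
$$F_\alpha(\tilde\rho_1) - F_\alpha(\tilde\rho_2) \;=\; \int_{\tilde\rho_2}^{\tilde\rho_1} \frac{c^2 - \alpha^2/r^2}{|D(r,\alpha)|}\, dr,$$
one has $dr = -\alpha^2/(c^2 s^2)\,ds$, the limits swap from $(\tilde\rho_2,\tilde\rho_1)$ to $(\rho_2,\rho_1)$, and $c^2 - \alpha^2/r^2 = c^2 - c^2(c s/\alpha)^2 \cdot \alpha^2/\alpha^2$; a direct simplification gives
$$F_\alpha(\tilde\rho_1) - F_\alpha(\tilde\rho_2) \;=\; \int_{\rho_1}^{\rho_2} \frac{\alpha^2/s^2 - c^2}{|D(\alpha^2/(c^2 s),\alpha)|}\, ds.$$

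The inequality to prove then reduces, since the common factor $\alpha^2/s^2 - c^2$ is non-negative for $s \in [\rho_1,\rho_2]$, to the pointwise estimate
$$|D(\alpha^2/(c^2 s),\alpha)| \;\leq\; |D(s,\alpha)| \qquad \text{for every } s \in [\rho_1,\rho_2].$$
For such $s$ one has $s \leq |\alpha|/c \leq \alpha^2/(c^2 s)$, so this is exactly the hypothesis that $|D|$ is a decreasing function of $\rho$.

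The only subtle step is the change of variables and keeping track of signs when the integration limits are reversed; once that is handled correctly, the conclusion is immediate from the monotonicity of $|D(\cdot,\alpha)|$. I do not foresee any genuine obstacle beyond this bookkeeping.
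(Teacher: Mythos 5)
Your proof is correct and is essentially the paper's argument in integral form: the paper differentiates $\rho \mapsto F_{\alpha}(\rho) - F_{\alpha}\bigl(\alpha^{2}/(c^{2}\rho)\bigr)$ and finds the derivative $\bigl(c^{2}-\alpha^{2}/\rho^{2}\bigr)\bigl[\,|D(\rho,\alpha)|^{-1} - |D(\alpha^{2}/(c^{2}\rho),\alpha)|^{-1}\bigr]$, which is exactly your pointwise comparison of the two integrands after the change of variables $r=\alpha^{2}/(c^{2}s)$. Both reduce to the same observation that $|D(\alpha^{2}/(c^{2}s),\alpha)| \leq |D(s,\alpha)|$ for $s\leq |\alpha|/c$ together with the sign of $\alpha^{2}/s^{2}-c^{2}$, so there is no substantive difference.
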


\begin{proof}
To prove~\eqref{croissance} it is sufficient to prove that  the function
 $$ \rho \mapsto F_{\alpha}(\rho)-F_{\alpha}\left(\frac{\alpha^{2}}{c^{2}\rho} \right) $$
 increases on $(0, \frac{|\alpha|}{c})$. We compute its derivative:
 $$ 
\begin{aligned}
 \frac{\partial}{\partial \rho} \left[F_{\alpha}(\rho)-F_{\alpha}\left(\frac{\alpha^{2}}{c^{2}\rho}\right)\right]
 	&= F_{\alpha}'(\rho)+ \frac{\alpha^{2}}{c^{2}\rho^{2}} F_{\alpha}'\left(\frac{\alpha^{2}}{c^{2}\rho}\right) \\
	&= \frac{1}{|D|(\rho, \alpha)} \left( c^{2}- \frac{\alpha^{2}}{\rho^{2}} \right) +  \frac{\frac{\alpha^{2}}{c^{2}\rho^{2}}}{|D|(\frac{\alpha^{2}}{c^{2}\rho}, \alpha)} \left( c^{2}- \frac{\alpha^{2}}{\frac{\alpha^{4}}{c^{4}\rho^{2}}} \right) \\
	&= \left[ \frac{1}{|D|(\rho, \alpha)}- \frac{1}{|D|(\frac{\alpha^{2}}{c^{2}\rho}, \alpha)} \right] \left( c^{2}- \frac{\alpha^{2}}{\rho^{2}} \right) .
\end{aligned}
$$
On $(0, \frac{|\alpha|}{c})$, this quantity has same sign as
$$|D|(\rho, \alpha)- |D| \left(\frac{\alpha^{2}}{c^{2}\rho}, \alpha \right),$$
which is positive if $|D|$ is a decreasing function of $\rho$, because $\rho \leq \frac{\alpha^{2}}{c^{2} \rho}$ on $(0, \frac{|\alpha|}{c})$.
\end{proof}

In the sequel we use the notation of Section~\ref{pbR sup} summarized on Figure~\ref{FSupSup}. We recall in particular that for any $(\rho, \alpha)$, the state denoting by $(\tilde{\rho}, \alpha)$ is reached with a shock at speed $v$: $\tilde{\rho}= \frac{\alpha^{2}}{c^{2}\rho}$. 
Let us first describe the form of the germ imposed by Hypothesis~\eqref{croissance}. 
\begin{lemma} \label{L3cases}
 Suppose that the drag force $D$ verifies the Hypothesis~\eqref{croissance}. Fix $(\rho_{-},q_{-})$ such that $\alpha_{-}=q_{-}- v \rho_{-}$ is positive and that $\rho_{-} \leq \frac{\alpha_{-}}{c}$. Then:
 \begin{itemize}
 \item If it exists $\rho_{0} \leq \frac{\alpha_{-}}{c}$ such that $F_{\alpha_{-}}(\rho_{-})- F_{\alpha_{-}}(\rho_{0}) = 1$, there exists a unique density $\rho_{1,+}$ greater than $\frac{\alpha_{-}}{c}$, such that $F_{\alpha_{-}}(\tilde{\rho_{-}})- F_{\alpha_{-}}(\rho_{1,+}) = 1$. Then solutions of~\eqref{eqregul} in $\xi= \eps/2$ can take the values $(\rho_{+},q_{+})$ with 
 $$\rho_{+} \in [\rho_{0,+}, \rho_{1,+}] \ \ \textrm{ and } \ \ q_{+}=\alpha_{-}+v \rho_{+},$$
where we denote by $\rho_{0,+}=\tilde{\rho_{0}}$;
 \item If $\rho_{0}$ does not exist but $\rho_{1,+}$ does,  the solutions of~\eqref{eqregul} in $\xi= \eps/2$ can take the values $(\rho_{+},q_{+})$ with 
  $$\rho_{+} \in \left[\frac{\alpha_{-}}{c}, \rho_{1,+} \right] \ \ \textrm{ and } \ \ q_{+}=\alpha_{-}+v \rho_{+},$$
and we denote by $\rho_{0,+}= \frac{\alpha_{-}}{c}$;
\item If neither $\rho_{0}$ nor $\rho_{1,+}$ exist, system~\eqref{eqregul} does not admit any solution on the whole interval $(-\eps/2, \eps/2)$.
\end{itemize}
\end{lemma}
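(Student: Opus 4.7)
The plan is to parametrize all piecewise $\mathcal{C}^1$ self-similar entropy solutions of~\eqref{eqregul} on $[-\eps/2, \eps/2]$ by the single scalar $\theta = H^\eps(\xi^0) \in [0,1]$ encoding the position of the (at most one) shock, and then to track how the right trace $\rho_+$ depends on $\theta$. Hypothesis~\eqref{croissance} will be used twice: once to rule out a pathological mismatch between the existence of $\rho_0$ and $\rho_{1,+}$, and once to force monotonicity of $\theta \mapsto \rho_+(\theta)$.

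First, I would invoke Lemma~\ref{lemmagerm} and Lemma~\ref{entropyshocks}: $\alpha^\eps$ is constant equal to $\alpha_- > 0$, on smooth portions $[F_{\alpha_-}(\rho^\eps)]' = -(H^\eps)'$, and the only admissible discontinuity is an entropy shock sending a supersonic density $\rho \in (0,\alpha_-/c]$ to its conjugate $\tilde\rho = \alpha_-^2/(c^2\rho)$. Since $F_{\alpha_-}$ decreases on $(0,\alpha_-/c)$ from $F_{\alpha_-}(\rho_-)$ down to $0$ and then increases on $(\alpha_-/c,+\infty)$, a smooth supersonic segment starting at $\rho_-$ can decrease $F_{\alpha_-}$ by at most $F_{\alpha_-}(\rho_-)$, and a smooth subsonic segment starting at $\tilde\rho \geq \alpha_-/c$ by at most $F_{\alpha_-}(\tilde\rho)$. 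A solution is therefore encoded by $\theta \in [0,1]$ together with $\rho(\theta)$, $\tilde\rho(\theta)$ and $\rho_+(\theta)$ satisfying $F_{\alpha_-}(\rho_-)-F_{\alpha_-}(\rho(\theta)) = \theta$, $\tilde\rho(\theta)=\alpha_-^2/(c^2\rho(\theta))$ and $F_{\alpha_-}(\tilde\rho(\theta))-F_{\alpha_-}(\rho_+(\theta)) = 1-\theta$. In particular $\rho_0$ (the value $\theta=1$) exists iff $F_{\alpha_-}(\rho_-) \geq 1$, while $\rho_{1,+}$ (the value $\theta = 0$) exists iff $F_{\alpha_-}(\tilde\rho_-) \geq 1$. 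Applying~\eqref{croissance} with $\rho_1 = \rho_-$ and $\rho_2 = \alpha_-/c$ gives $F_{\alpha_-}(\rho_-) \leq F_{\alpha_-}(\tilde\rho_-)$, so existence of $\rho_0$ automatically forces existence of $\rho_{1,+}$; the third bullet of the lemma corresponds exactly to $F_{\alpha_-}(\tilde\rho_-) < 1$, where even $\theta = 0$ fails.

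Second, I would establish monotonicity of $\theta \mapsto \rho_+(\theta)$. By Hypothesis~\eqref{croissance}, the map $\rho \mapsto F_{\alpha_-}(\rho) - F_{\alpha_-}(\tilde\rho)$ is non-decreasing on $(0,\alpha_-/c]$, and since $\theta \mapsto \rho(\theta)$ is increasing on the supersonic branch this forces $\theta \mapsto F_{\alpha_-}(\tilde\rho(\theta)) + \theta$ to be non-increasing. Consequently $F_{\alpha_-}(\rho_+(\theta)) = F_{\alpha_-}(\tilde\rho(\theta)) - (1-\theta)$ is non-increasing, and because $\rho_+(\theta) \geq \alpha_-/c$ lies on the strictly increasing branch of $F_{\alpha_-}$, the map $\theta \mapsto \rho_+(\theta)$ is non-increasing and continuous. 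I can then read off the three cases. In case A, $\theta$ ranges over the whole $[0,1]$ and the image $\{\rho_+(\theta)\}$ is exactly $[\rho_+(1),\rho_+(0)] = [\tilde\rho_0,\rho_{1,+}] = [\rho_{0,+},\rho_{1,+}]$. In case B, the admissible range of $\theta$ is $[0,\theta^*]$ with $\theta^*$ characterized by $\rho_+(\theta^*)=\alpha_-/c$, so the image is $[\alpha_-/c,\rho_{1,+}]$. In case C, no admissible $\theta$ exists.

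The main obstacle is the monotonicity argument: without Hypothesis~\eqref{croissance}, $\theta \mapsto \rho_+(\theta)$ can fail to be monotone, producing the Z-shaped merged wave of Figure~\ref{GLF} and the multiplicity phenomenon of Section~\ref{SMultipleSolutions}. Verifying carefully that~\eqref{croissance} exactly cancels the competition between the decrease of $F_{\alpha_-}(\tilde\rho(\theta))$ and the increase of $\theta$ is thus the only non-routine step; everything else is bookkeeping of the branches of $F_{\alpha_-}$.
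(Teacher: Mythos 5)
Your proposal is correct and follows exactly the intended route: the $\theta$-parametrization of the solutions of~\eqref{eqregul} coming from Theorem~\ref{Tdefgermgen}, together with the reformulation of Hypothesis~\eqref{croissance} as the monotonicity of $\rho \mapsto F_{\alpha_{-}}(\rho)-F_{\alpha_{-}}(\tilde{\rho})$ on $\left(0,\frac{\alpha_{-}}{c}\right]$, which yields both the implication ``$\rho_{0}$ exists $\Rightarrow$ $\rho_{1,+}$ exists'' and the monotonicity of $\theta \mapsto \rho_{+}(\theta)$. Note that the paper never writes this argument out in full (the proof environment placed after the lemma is really the proof of Proposition~\ref{RSupSup} in the general case, and cites the lemma rather than proving it), so your write-up supplies precisely the reasoning the paper leaves implicit.
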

Those three cases are illustrated in Figure~\ref{F3cases} below.
\begin{psfrags} 
 \psfrag{al}{$ 1$}
  \psfrag{rho}{$\rho$}
  \psfrag{2ac}{$ $}
  \psfrag{2ac+la}{}
  \psfrag{a/c}{ $\frac{\alpha_{-}}{c} $}
  \psfrag{rL}{ $\rho_{-} $}
  \psfrag{trL}{ $\tilde{\rho_{-}} $}
  \psfrag{r0}{ $\rho_{0} $}
  \psfrag{r0+}{ $\rho_{0,+} $}
  \psfrag{r1+}{ $\rho_{1,+} $}
  \psfrag{rho ->F}{}
 \begin{figure}[H] 
 \centering
 \includegraphics[width=18cm]{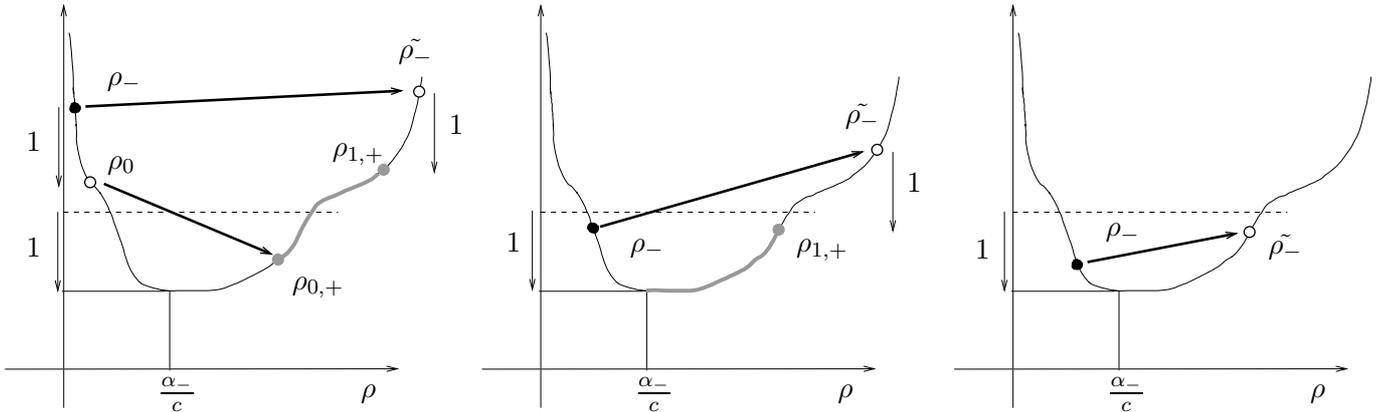}
 \caption{ The three cases of Lemma~\ref{L3cases} from left to right. The bold arrows are entropy shocks at speed~$v$.}
 \label{F3cases}
 \end{figure}
 \end{psfrags}
 \begin{proof}[Proof (Lemma~\ref{L3cases})]
 We first suppose that, as in top of Figure~\ref{FPos},
 $$ F_{\alpha_{L}}(\rho_{L})- F_{\alpha_{L}}(\rho_{E}) \geq1.$$
Then, there exists  $\rho_{0} \in [ \rho_{L}, \rho_{E}]$ such that
 $$ F_{\alpha_{L}}(\rho_{L})- F_{\alpha_{L}}(\rho_{0})= 1,$$
and the state $(\rho_{0}, \alpha_{L})$ belongs to $\Omega_{+}^{sup}$ and provides an admissible state on the right of the particle. Let us prove that it is the unique solution. As $\rho_{0}< \rho_{E}$, Hypothesis~\eqref{croissance} yields $\rho_{0,+}>\tilde{ \rho_{E}}$. We also proved in Lemma~\ref{L3cases} that $\rho_{1,+} \geq \rho_{0,+}$. Therefore, the interval $[\rho_{0,+}, \rho_{1,+}]$ does not intersect $\Gamma_{+}^{sub}$. Eventually, Hypothesis~\eqref{croissance} gives that $F_{\alpha}(\tilde{\rho_{L}})-F_{\alpha}(\tilde{\rho_{E}}) \geq 1$. Thus we cannot choose any subsonic traces by Lemma~\ref{triangle}. 
We now suppose that
$$ F_{\alpha_{L}}(\rho_{L})- F_{\alpha_{L}}(\rho_{E}) <1.$$ 
\begin{psfrags}
 \psfrag{rho}{$\rho$}
 \psfrag{V+}{$ \Omega_{+}^{sup}$} 
 \psfrag{V-}{$ \Omega_{-}^{sup}$} 
 \psfrag{rl}{$\rho_L$}
 \psfrag{r0}{$\rho_0$}
 \psfrag{r0+}{$\rho_{0,+}$}
 \psfrag{r1+}{$\rho_{1,+}$}
 \psfrag{rlex}{$\tilde{\rho_{L}}$}
 \psfrag{l a}{$\lambda \alpha_{L}$}
 \psfrag{re}{$\rho_E$}
 \psfrag{tre}{$\tilde{\rho_E}$}
 \psfrag{rr}{$\rho_R$}
 \psfrag{rrex}{$\rho_{R,ex}$}
 \psfrag{rf}{$\rho_F$}
 \psfrag{trf}{$\tilde{\rho_F}$}
 \psfrag{a}{$\alpha$}
 \psfrag{al}{$\alpha_L$}
 \psfrag{arex}{$\alpha_{R,ex}$}
 \psfrag{u=c}{$u=c$}
 \psfrag{u=-c}{$u=-c$}
 \psfrag{O+}{$\Omega_{+}^{sup}$}
 \psfrag{O-}{$\Omega_{-}^{sup}$}
 \psfrag{(rr,ar)}{$(\rho_{R},\alpha_{R})$}
 \begin{figure}[H]
 \centering
 \includegraphics[width=12cm]{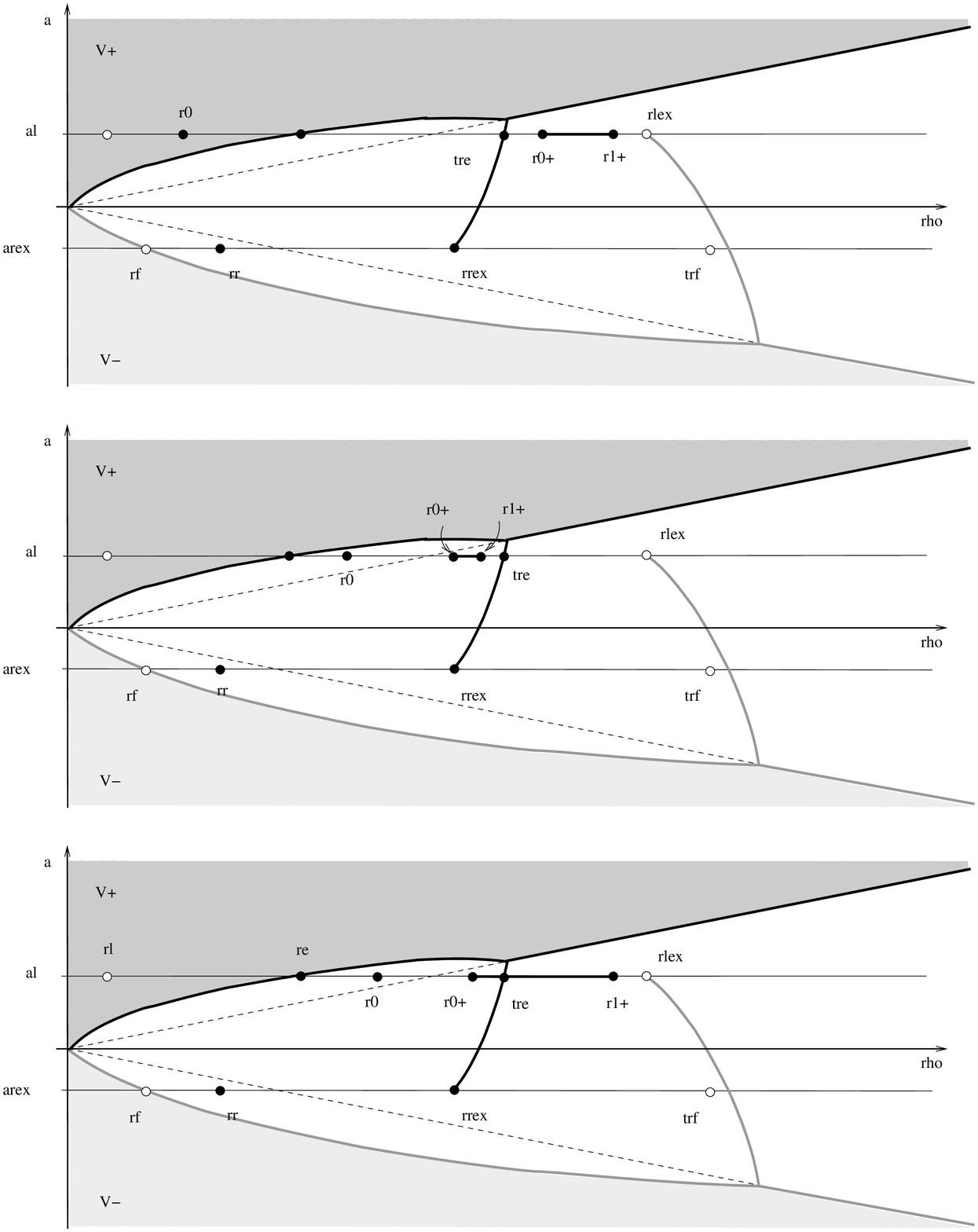}
 \caption{Relative positions of $\rho_{0,+}$ and $\rho_{1,+}$.}
 \label{FPos}
 \end{figure}
\end{psfrags}
 If $F_{\alpha}(\tilde{\rho_{L}})-F_{\alpha}(\tilde{\rho_{E}}) < 1$ (Figure~\ref{FPos}, in the middle), which rewrites $\Delta(\alpha_{L})<0$, Lemma~\ref{triangle} implies that there exists a unique solution inside the subsonic triangle $|\alpha|<c$. Moreover, Hypothesis~\eqref{croissance} yield $F_{\alpha}(\rho_{L})-F_{\alpha}(\rho_{E}) < 1 $, and $[\rho_{0,+}, \rho_{1,+}]$ is included in $[\frac{\alpha_{L}}{c}, \tilde{\rho_{E}}]$, so there is no solution with $(\rho_{L}, \alpha_{L})$ for the left trace. Eventually, if $F_{\alpha}(\tilde{\rho_{L}})-F_{\alpha}(\tilde{\rho_{E}}) > 1$ (at the bottom of Figure~\ref{FPos}), there is no solution in the subsonic triangle. But in that case, $\rho_{1,+}$ exists and is greater than $\tilde{\rho_{E}}$, while $\rho_{0,+}$ is smaller than $\tilde{\rho_{E}}$ (and might be equal to $\frac{\alpha_{L}}{c}$). Therefore we can take $(\rho_{L}, \alpha_{L})$ for the left trace and $(\tilde{\rho_{E}}, \alpha_{L})$ for the right trace.
\end{proof}

\end{appendices}
\phantomsection
\addcontentsline{toc}{section}{References} 
\nocite{*}
\bibliographystyle{alpha}
\bibliography{biblioM2}

\end{document}